\numberwithin{equation}{section}
\numberwithin{figure}{section}
\numberwithin{table}{section}
\long\def\MSC#1\EndMSC{\def\arg{#1}\ifx\arg\empty\relax\else
	{\narrower\noindent%
		{2020 Mathematics Subject Classification}: #1\\} \fi}
\long\def\PACS#1\EndPACS{\def\arg{#1}\ifx\arg\empty\relax\else
	{\narrower\noindent%
		{PACS numbers}: #1}\fi}
\long\def\KEY#1\EndKEY{\def\arg{#1}\ifx\arg\empty\relax\else
	{\narrower\noindent%
		Keywords: #1\\}\fi}
\theoremstyle{plain}
\newtheorem{theorem}{Theorem}[section]
\newtheorem{lemma}[theorem]{Lemma}
\newtheorem{proposition}[theorem]{Proposition}
\newtheorem{corollary}[theorem]{Corollary}
\theoremstyle{definition}
\newtheorem{definition}[theorem]{Definition}
\newtheorem{assumption}[theorem]{Assumption}
\theoremstyle{remark}
\newtheorem{remark}[theorem]{Remark}
\newcommand{\norm}[1]{\lVert#1\rVert}
\newcommand{\abs}[1]{\lvert#1\rvert} 
\newcommand{\inner}[1]{\langle#1\rangle} 
\newcommand{\redel}{\mathop{\textup{Re}}}
\newcommand{\imdel}{\mathop{\textup{Im}}}
\newcommand{\suppm}{\mathop{\textup{supp}}}
\newcommand{\R}{\mathbb{R}}
\newcommand{\N}{\mathbb{N}}
\newcommand{\C}{\mathbb{C}}
\newcommand{\I}{\mathrm{i}}    
\newcommand{\di}{\mathrm{d}}   
\newcommand{\upR}{^{\textup{R}}}
\newcommand{\upI}{^{\textup{I}}}
\newcommand{\uptR}{^{\!\textup{R}}}
\newcommand{\uptI}{^{\!\textup{I}}}
\newcommand{\LambdaR}{\Lambda\uptR}
\newcommand{\LambdaI}{\Lambda\uptI}
\newcommand{\AR}{A\upR}
\newcommand{\AI}{A\upI}
\newcommand{\Hsa}{\mathcal{H}_{\textup{SA}}}
\newcommand{\DLambda}{\textup{D}\mkern-1.5mu \Lambda}
\newcommand{\DLambdah}{\textup{D}\mkern-1.5mu \widehat{\Lambda}}
\newcommand{\DPhi}{\mathop{\textup{D}\Phi}\nolimits}
\begin{document}
	\title[Reconstruction of anisotropic self-adjoint inclusions]{Direct reconstruction of anisotropic self-adjoint inclusions in the Calder\'on problem}
	
	\author[H.~Garde]{Henrik~Garde}
	\address[H.~Garde]{Department of Mathematics, Aarhus University, Aarhus, Denmark.}
	\email{garde@math.au.dk}
	
	\author[D.~Johansson]{David~Johansson}
	\address[D.~Johansson]{Department of Mathematics, Aarhus University, Aarhus, Denmark.}
	\email{johansson@math.au.dk}
	
	\author[T.~Zacharopoulos]{Thanasis~Zacharopoulos}
	\address[T.~Zacharopoulos]{Department of Mathematics, Aarhus University, Aarhus, Denmark.}
	\email{thanzacharop@math.au.dk}
	
	\begin{abstract}
		We extend the monotonicity method for direct exact reconstruction of inclusions in the partial data Calder\'on problem, to the case of general anisotropic conductivities in any spatial dimension $d\geq 2$. From a local Neumann-to-Dirichlet map, we give reconstruction methods of inclusions based on unknown anisotropic self-adjoint perturbations to a known anisotropic conductivity coefficient. This additionally provides new insights into the non-uniqueness issues of the anisotropic Calder\'on problem.
		
		The main assumption is a definiteness condition for the perturbations near the outer inclusion boundaries. Beyond this condition, they are $L^\infty$-perturbations that may be indefinite away from the outer inclusion boundaries, and with no boundary regularity requirement for the inclusions. Alternatively, we allow extreme parts that are perfectly insulating or perfectly conducting, in which case we require Lipschitz regularity of the outer inclusion boundaries. 
	\end{abstract}	
	\maketitle
	
	\KEY
	anisotropic Calder\'on problem, inclusion detection, monotonicity method.
	\EndKEY
	
	\MSC
	35R30, 35R05, 47H05.
	\EndMSC
	
	\tableofcontents
	\clearpage
	
	\section{Introduction} \label{sec:intro}
	
	The anisotropic Calder\'on problem, the inverse problem of determining an elliptic matrix-valued conductivity coefficient $A$ from boundary measurements of the conductivity equation
	\begin{equation*}
		-\nabla\cdot(A\nabla u) = 0 \text{ in } \Omega,
	\end{equation*}
	is of major interest in the inverse problems community. Unlike the isotropic (scalar-valued) case, the anisotropic Calder\'on problem suffers from significant non-uniqueness issues. Indeed if boundary measurements are taken on $\Gamma\subseteq\partial\Omega$, for any $H^1$-diffeomorphism $\Phi\colon \overline{\Omega}\to\overline{\Omega}$ for which $\Phi|_{\Gamma}$ is the identity, the conductivity coefficient defined via the push-forward
	\begin{equation} \label{eq:pushforward}
		\widetilde{A} = \frac{\DPhi A \DPhi^{\textup{T}}}{\abs{\det\DPhi}}\circ \Phi^{-1}
	\end{equation}
	produces the same local Neumann-to-Dirichlet (ND) map as the coefficient $A$. For $\Gamma = \partial\Omega$ and with spatial dimension $d=2$, the obstructions to uniqueness are characterized by~\eqref{eq:pushforward} for real-valued symmetric $L^\infty$-conductivities~\cite{Astala2005,Sylvester1990}. In spatial dimension $d\geq 3$, the full extent of the non-uniqueness issues (beyond real-analytic coefficients) remains one of the most important unanswered questions in the inverse problems community~\cite{Lee1989,Uhlmann2009}. In a fractional Calder\'on problem, using the stronger non-local properties, the obstructions to uniqueness are characterized for $d\geq 2$~\cite{Feiz2025}. 
	
	Despite knowledge on the possible obstructions, the non-uniqueness has prevented development of practically useful reconstruction methods. Unique recovery is only guaranteed with very strong restrictions on the considered class of coefficients~\cite{Alessandrini2017,Alessandrini2018,Foschiatti2025}.
	
	In this paper we study the exact reconstruction of anisotropic inclusions in any spatial dimension $d\geq 2$. We reconstruct the outer shape $D^\bullet$ of
	\begin{equation*}
		D = \suppm(A_D - A_0),
	\end{equation*}
	from knowledge of $A_0$ and the local ND map $\Lambda(A_D)$ associated with the unknown conductivity~$A_D$. Here $D^\bullet$ is the smallest closed set with connected complement such that~$D\subseteq D^\bullet$. We give direct reconstruction methods (Theorems~\ref{thm:main}, \ref{thm:mainlinear}, and~\ref{thm:mainextreme}) based on the monotonicity of the forward map $A\mapsto\Lambda(A)$. The main assumptions are self-adjointness of the coefficients, that $A_0$ satisfies a unique continuation principle, and that $A_D-A_0$ satisfies a local uniform definiteness condition inside $D$ near $\partial D^\bullet$; see Assumption~\ref{assump:recon} for the precise statement. In particular, inside $D$ we do not assume more regularity than $L^\infty$ for $A_D$, and $A_D-A_0$ can even be indefinite at a positive distance away from~$\partial D^\bullet$.
	
	As a consequence, for a given self-adjoint ``background conductivity''~$A_0$ satisfying the unique continuation principle, and any self-adjoint conductivities $A_1$ and $A_2$ (that equal $A_0$ near $\partial\Omega$), we have that
	\begin{equation*}
		\Lambda(A_1) = \Lambda(A_2) \quad\text{\emph{and}}\quad \parbox{9.5em}{\emph{definiteness condition}\\ \emph{holds for $A_1$ and $A_2$}} \quad \Rightarrow \quad \suppm(A_1-A_0)^\bullet = \suppm(A_2-A_0)^\bullet.
	\end{equation*}  
	Hence, the obstructions from the anisotropic Calder\'on problem do not affect the inclusion detection problem under the definiteness condition. Practical reconstruction of inclusions is therefore sensible in a very high generality compared to reconstructing an anisotropic conductivity. 
	
	To provide some further intuition, consider a self-adjoint conductivity $A_D$, satisfying the definiteness condition, and with $\widetilde{A}_D$ defined from $A_D$ via \eqref{eq:pushforward} for a transformation $\Phi$. Let
	\begin{equation*}
		\widetilde{D} = \Phi(D)
	\end{equation*} 
	be the transformed inclusion under $\Phi$. If $\widetilde{D}^\bullet \neq D^\bullet$ then $\Phi$ will fail to be the identity in an open set intersecting $\partial D^\bullet$ (the part where it is deformed by $\Phi$), which also changes the background conductivity in $\widetilde{A}_D$ in that part of the domain via \eqref{eq:pushforward}. We conclude for
	\begin{equation*}
		\widehat{D} = \suppm(\widetilde{A}_D-A_0),
	\end{equation*} 
	if $\widehat{D}^\bullet \neq D^\bullet$, the definiteness condition cannot be satisfied inside $\widehat{D}$ near $\partial\widehat{D}^\bullet$. In particular, it may fail in part of the domain where the background conductivity in $\widetilde{A}_D$ is modified. 
	
	See also \cite{Crista2017} for a stability result in a particular geometric setting. Moreover for Helmholtz scattering, there are uniqueness results for inclusion detection with some anisotropic coefficients~\cite{Hahner2000,Cakoni2022}.
	
	\subsection{Outlining the main results}
	
	The reconstruction methods we present are based on monotonicity of the forward map via the following operator inequalities (Proposition~\ref{prop:mono}):
	\begin{equation*}
		\int_\Omega (A_2-A_1)\nabla u_2\cdot \overline{\nabla u_2}\,\di x \leq \inner{f,(\Lambda_1-\Lambda_2)f} \leq \int_\Omega A_2 A_1^{-1}(A_2-A_1)\nabla u_2\cdot\overline{\nabla u_2}\,\di x,
	\end{equation*}
	where $\Lambda_j = \Lambda(A_j)$ are the local ND maps, and $u_2$ is the electric potential for conductivity $A_2$ and Neumann condition~$f$. The definiteness properties of $A_2 A_1^{-1}(A_2-A_1)$ turn out to be the same as for $A_2-A_1$ (Proposition~\ref{prop:bnd}). 
	
	The \emph{outer approach} of the monotonicity method, given in our main results (Theorems~\ref{thm:main},  \ref{thm:mainlinear}, and~\ref{thm:mainextreme}), constructs test-operators that use the monotonicity to check if a given admissible test-set $C$ fully contains $D^\bullet$ or not. Reconstruction is of the form (from Theorem~\ref{thm:main}):
	\begin{equation*}
		D \subseteq C \quad \text{\emph{if and only if}} \quad \Lambda_C^{-} \geq \Lambda(A_D) \geq \Lambda_C^{+},
	\end{equation*}
	for test-operators $\Lambda_C^{-}$ and $\Lambda_C^{+}$, and the local ND map $\Lambda(A_D)$ for the unknown conductivity. In Theorem~\ref{thm:mainlinear} the test-operators are linearized, which is beneficial for numerical implementation. 
	
	For Theorem~\ref{thm:mainextreme} we also allow perfectly insulating and perfectly conducting parts in the unknown conductivity, and this is also used for the test-operators, which means that no conductivity bounds are needed for this approach. Theorem~\ref{thm:mainextreme} is arguably the overall main result of the paper, although with the downside of requiring Lipschitz regularity of $\partial D^\bullet$ (which is unlike in Theorems~\ref{thm:main} and~\ref{thm:mainlinear}, where no explicit boundary regularity is needed for the inclusions). 
	
	The paper is written in two parts, with Sections~\ref{sec:forward}--\ref{sec:mainnegproof} dedicated to the finite perturbation setting, while Sections~\ref{sec:forwardextreme}--\ref{sec:mainproofextreme} are dedicated to the extreme perturbation setting, following the approach for the isotropic case in~\cite{Garde2020,Garde2022b}. It should be noted that Theorem~\ref{thm:mainextreme} \emph{cannot} simply be obtained via a limit from Theorem~\ref{thm:main}. There is no short or straightforward proof for the difficult direction in the \emph{if and only if} result, and it requires the introduction of several tools specific for extreme anisotropic coefficients; see~\cite[remark 5.4]{Garde2020} for additional details in the isotropic case.
	
	The proofs rely on the theory of localized potentials~\cite{Gebauer2008b}, for the existence of certain localizing solutions to the conductivity equation, based on a unique continuation principle. We generalize the result on localized potentials to anisotropic (even non-self-adjoint) coefficients in Theorem~\ref{thm:locpot}.
	
	Under much stronger assumptions, we also give an \emph{inner approach} of the monotonicity method in Theorems~\ref{thm:mainlinearinnerpos}, \ref{thm:mainlinearinnerneg}, \ref{thm:extremelinear}, and~\ref{thm:extreme2}, which checks if a given open set $B$ is contained in $D^\bullet$.
	
	\subsection{Previous results for the monotonicity method}
	
	The monotonicity method is well-established for \emph{isotropic} conductivities, and for other similar inverse coefficient problems. We mention some of the results here, with a focus on the Calder\'on problem. In~\cite{Tamburrino2002} they use the monotonicity to give bounds on inclusions, and later~\cite{Harrach10,Harrach13} prove that it is an exact reconstruction method for finite perturbations to the background conductivity, using the theory of localized potentials. It also includes the case with both positive and negative perturbations, if lower and upper bounds are known for the perturbed conductivity. This is expanded upon in~\cite{Garde2020}, and now the perturbed conductivity can simultaneously have parts with finite positive and negative perturbations, as well as extreme parts that are perfectly insulating and perfectly conducting. The need for lower and upper bounds is also removed. Finally, in~\cite{GardeVogelius2024} the method is generalized to reconstruct collections of Lipschitz cracks.
	
	The method has also been used in other specialized directions: degenerate and singular perturbations based on $A_2$-Muckenhoupt weights~\cite{Garde2022b}, inclusions in a fractional Calder\'on problem~\cite{Lin2019,Lin2020}, inclusions in a $p$-Laplace equation~\cite{Brander2018}, for nonlinear materials~\cite{Tamburrino2025}, and for local uniqueness in an inverse coefficient problem for a non-resonant Schr\"odinger equation~\cite{Pohjola2019}. There are also rigorous connections to practically relevant electrode models~\cite{GardeStaboulis2016,GardeStaboulis2019,Harrach19,Harrach15}. 
	
	Certain reconstruction methods for the partial data isotropic Calder\'on problem also rely on the same type of methodology. For piecewise constant layered conductivities, there is an exact reconstruction method for both conductivity coefficient and the piecewise constant partition~\cite{Garde2020b,Garde2022}. In a finite-dimensional setting (where the partition is known), a general piecewise constant conductivity is reconstructed in \cite{Garde2025}, and likewise via a reformulation to a convex semidefinite optimization problem in~\cite{Harrach23}. 
	
	\subsection{Remarks on notation} \label{sec:notation}
	
	All considered vector spaces are complex; this includes the classes of conductivity coefficients. We use the convention that inner products on complex Hilbert spaces are linear in the first entry and anti-linear in the second. We denote the Euclidean inner product as $z_1\cdot\overline{z_2}$ for $z_1,z_2\in \C^d$, in particular the ``dot'' is bilinear. The Euclidean norm is denoted~$\abs{\,\cdot\,}$.
	
	For self-adjoint operators $A,B\in\mathscr{L}(H)$ for a Hilbert space $H$, we write $A\geq B$ when $A-B$ is positive semidefinite; this is the Loewner ordering of such operators. Moreover, for $A$ being a positive definite operator, $A^{1/2}$ denotes its unique positive definite square root.
	
	We consistently make use of the \emph{essential} support/supremum/infimum. We denote the interior of a set $C\subseteq \R^d$ as $C^\circ$. At certain occurrences (mainly from Section~\ref{sec:forwardextreme} and onwards) we denote by $K>0$ a generic constant that may change from line to line in the mathematical contents.
	
	\section{The forward problem} \label{sec:forward}
	
	Let $\Omega$ be a bounded Lipschitz domain in $\R^d$, for $d\in\N\setminus\{1\}$, with connected complement. For a bounded matrix-valued function $A\in L^\infty(\Omega)^{d\times d}$, we use the norm
	\begin{equation*}
		\norm{A}_{*} = \sup_{x\in\Omega}\norm{A(x)}_2,
	\end{equation*}
	where $\norm{\,\cdot\,}_2$ is the usual spectral norm for matrices, i.e.\ the operator norm on Euclidean space. Hence, we have
	\begin{equation*}
		\abs{A(x)\xi} \leq \norm{A}_*\abs{\xi}
	\end{equation*}
	for all $\xi\in\C^d$ and a.e.~$x\in\Omega$. 
	
	We use the notation 
	\begin{equation*}
		\AR = \frac{1}{2}\bigl(A + A^*\bigr) \quad \text{and} \quad \AI = \frac{1}{2\I}\bigl(A - A^*\bigr),
	\end{equation*}
	such that 
	\begin{equation*}
		A = \AR + \I\AI,
	\end{equation*}
	where we note that $\AR$ and $\AI$ are self-adjoint as matrices, i.e.\ Hermitian at every point in $\Omega$. This gives rise to the following general class of anisotropic coefficients:
	\begin{equation*}
		\mathcal{H}(\Omega) = \{\, A\in L^\infty(\Omega)^{d\times d} \mid \exists c>0 \colon \AR \geq c I \text{ in the Loewner order in $\Omega$} \,\}.
	\end{equation*}
	The condition on $A$, the uniform positive definiteness of $\AR$, can also explicitly be written:
	\begin{equation*}
			\inf_{x\in\Omega} \redel\bigl(A(x)\xi\cdot\overline{\xi}\bigr) = \inf_{x\in\Omega} \AR(x)\xi\cdot\overline{\xi} \geq c\abs{\xi}^2
	\end{equation*}
	for all $\xi\in\C^d$; there are other classes of strongly elliptic coefficients (when restricting to $\xi\in\R^d$) that we do not consider here, see e.g.~\cite[chapter 4]{McLean}. We also have the subset where $A$ is self-adjoint, i.e.\ when $A=\AR$:
	\begin{equation*}
		\Hsa(\Omega) = \{\, A\in\mathcal{H}(\Omega) \mid \AI = 0 \,\}.
	\end{equation*}
	
	Consider the partial data anisotropic conductivity problem for $A\in\mathcal{H}(\Omega)$: 
	\begin{equation} \label{eq:condeq}
		-\nabla\cdot(A\nabla u) = 0 \text{ in } \Omega, \qquad 	\nu\cdot(A\nabla u) = \begin{dcases}
			f & \text{on } \Gamma, \\
			0 & \text{on } \partial\Omega\setminus\Gamma.
		\end{dcases}
	\end{equation}
	Here $\nu$ is the outer unit normal to $\Omega$, $\Gamma\subseteq \partial \Omega$ is a non-empty open boundary piece, and the mean-free current density $f$ belongs to
	\begin{equation*}
		L^2_\diamond(\Gamma) = \{\, f\in L^2(\Gamma) \mid \inner{f,1} = 0 \,\}.
	\end{equation*}
	We denote by $\inner{\,\cdot\,,\,\cdot\,}$ the standard inner product on $L^2(\Gamma)$, and $\norm{\,\cdot\,}$ is the associated norm. We likewise denote
	\begin{equation*}
		H^1_\diamond(\Omega) = \{\, u\in H^1(\Omega) \mid \inner{u|_\Gamma,1} = 0 \,\}.
	\end{equation*}
	
	A Poincar\'e inequality, related to the mean-free condition in $H^1_\diamond(\Omega)$, enables that 
	\begin{equation*}
		\norm{v}_{H^1_\diamond(\Omega)} = \Bigl(\int_\Omega \abs{\nabla v}^2\,\di x \Bigr)^{1/2}
	\end{equation*}
	becomes a norm on $H^1_\diamond(\Omega)$, equivalent with the usual $H^1$-norm. Moreover, the following continuity and coercivity estimates hold for all $w,v\in H^1_\diamond(\Omega)$:
	\begin{align*}
		\bigl\lvert\int_\Omega A\nabla w \cdot\overline{\nabla v}\,\di x\bigr\rvert &\leq \norm{A}_*\norm{w}_{H^1_\diamond(\Omega)}\norm{v}_{H^1_\diamond(\Omega)}, \\[1mm]
		\bigl\lvert\int_\Omega A\nabla v \cdot\overline{\nabla v}\,\di x\bigr\rvert &\geq \redel\int_\Omega A\nabla v \cdot\overline{\nabla v}\,\di x \geq c\norm{v}_{H^1_\diamond(\Omega)}^2. 
	\end{align*}
	By the Lax--Milgram lemma, there exists a unique solution $u = u_f^A \in H^1_\diamond(\Omega)$ to the corresponding weak problem associated to \eqref{eq:condeq}:
	\begin{equation} \label{eq:weak}
		\int_\Omega A\nabla u \cdot\overline{\nabla v}\,\di x = \inner{f,v}
	\end{equation}
	for all $v\in H^1_\diamond(\Omega)$. We let $\Lambda(A) \in \mathscr{L}(L^2_\diamond(\Gamma))$ denote the associated local ND map,
	\begin{equation*}
		\Lambda(A)f = u_f^A|_{\Gamma},
	\end{equation*}
	and the forward problem is the nonlinear mapping $\Lambda \colon \mathcal{H}(\Omega) \to \mathscr{L}(L^2_\diamond(\Gamma))$. 
	
	From \eqref{eq:weak} it holds that for any $A_1,A_2\in \mathcal{H}(\Omega)$ and $f,g\in L^2_\diamond(\Gamma)$, we have
	\begin{equation} \label{eq:lambdaweak}
		\inner{f,\Lambda(A_1)g} = \int_\Omega A_2\nabla u_f^{A_2}\cdot \overline{\nabla u_g^{A_1}}\,\di x.
	\end{equation}
	Thus the adjoint operator satisfies
	\begin{equation*}
		\inner{f,\Lambda(A_1)^*g} = \overline{\inner{g,\Lambda(A_1)f}} = \int_\Omega \overline{A_2 \nabla u_g^{A_2}}\cdot\nabla u_f^{A_1}\,\di x = \int_\Omega A_2^*\nabla u_f^{A_1}\cdot\overline{\nabla u_g^{A_2}} \,\di x,
	\end{equation*}
	and therefore
	\begin{equation*}
		\inner{f,\Lambda(A^*)g} = \int_\Omega A\nabla u_f^A \cdot\overline{\nabla u_g^{A^*}}\,\di x = \inner{f,\Lambda(A)^*g}.
	\end{equation*}
	Specifically $\Lambda(A^*) = \Lambda(A)^*$, meaning that $\Lambda(A)$ is self-adjoint if $A\in\Hsa(\Omega)$. 
	
	Of particular interest are the quadratic forms
	\begin{equation*}
		\inner{f,\Lambda(A)f} = \int_\Omega A\nabla u_f^A\cdot\overline{\nabla u_f^A} \,\di x \quad \text{and} \quad 
		\inner{f,\Lambda(A)^*f} = \int_\Omega A^*\nabla u_f^A\cdot\overline{\nabla u_f^A} \,\di x.
	\end{equation*}
	We also write
	\begin{equation*}
		\LambdaR(A) = \frac{1}{2}\bigl(\Lambda(A) + \Lambda(A)^*\bigr) \quad \text{and} \quad \LambdaI(A) = \frac{1}{2\I}\bigl(\Lambda(A) - \Lambda(A)^*\bigr),
	\end{equation*}
	so for the forward problem we have
	\begin{equation*}
		\Lambda = \LambdaR + \I\LambdaI,
	\end{equation*}
	where $\LambdaR(A)$ and $\LambdaI(A)$ are self-adjoint operators. In terms of the quadratic forms we thus have
	\begin{align}
		\inner{f,\LambdaR(A)f} &= \redel\inner{f,\Lambda(A)f} = \int_\Omega \AR\nabla u_f^A\cdot\overline{\nabla u_f^A} \,\di x = \int_\Omega \abs{(\AR)^{1/2}\nabla u_f^A}^2\,\di x, \label{eq:lambdaR}\\
		\inner{f,\LambdaI(A)f} &= -\imdel\inner{f,\Lambda(A)f} = -\int_\Omega \AI\nabla u_f^A\cdot\overline{\nabla u_f^A} \,\di x. \label{eq:lambdaI}
	\end{align}
	As one would expect, when multiplying with a complex scalar $\kappa\in\C$, we get
	\begin{equation*}
		(\kappa\Lambda)\upR = \kappa\upR\LambdaR - \kappa\upI\LambdaI \quad \text{and} \quad (\kappa\Lambda)\upI = \kappa\upI\LambdaR + \kappa\upR\LambdaI.
	\end{equation*}
	
	\section{Outer reconstruction of general anisotropic inclusions} \label{sec:main}
	
	In the following we will assume that a so-called ``background conductivity'' $A_0\in\Hsa(\Omega)$ is known. For an unknown $A_D\in \Hsa(\Omega)$, we call 
	\begin{equation*}
		D = \suppm(A_D - A_0)
	\end{equation*}
	the \emph{inclusions}, noting that $D$ may have several connected components. Our reconstruction methods will relate to the so-called outer shape of $D$.
	\begin{definition}
		The \emph{outer shape} $D^\bullet$ of $D$ is the smallest closed set with connected complement such that~$D\subseteq D^\bullet$. 
	\end{definition}
	One of the key assumptions we will need, required for the existence of certain localized solutions, is that the background conductivity $A_0$ satisfies a unique continuation principle.
	\begin{definition} \label{defi:ucp}
		Let $U\subseteq \overline{\Omega}$ be relatively open and connected. $A\in \mathcal{H}(\Omega)$ satisfies the weak \emph{unique continuation principle} (UCP) in $U$ for the conductivity equation if only the trivial solution of 
		\begin{equation*}
			-\nabla \cdot(A\nabla u) = 0 \text{ in } U^\circ
		\end{equation*}
		can be identically zero in a non-empty open subset of $U$, and only the trivial solution has vanishing Cauchy data, $u$ and $\nu\cdot(A\nabla u)$, on a non-empty open part of $U\cap \partial\Omega$. If $U = \overline{\Omega}$ we just say that $A$ satisfies the UCP.
	\end{definition}	
	\begin{remark}
		It should be noted that the ``UCP from Cauchy data'' follows from the ``UCP from open subsets'', so for simplicity we are grouping both concepts together in Definition~\ref{defi:ucp}.
		
		In spatial dimension $d=2$, the UCP holds for general real-valued $A\in\Hsa(\Omega)$ (and even in some non-self-adjoint cases), see~\cite{Alessandrini2012}. 
		
		In spatial dimension $d \geq 3$, the UCP holds for real-valued Lipschitz continuous coefficients $A\in\Hsa(\Omega)$, see~\cite{Garofalo1986}, and with counterexamples to the UCP for $\alpha$-H\"older continuous coefficients with $\alpha\in(0,1)$, see~\cite{Miller74,Mandache1998}. 
		
		We note that there also exist UCP results related to classes of complex coefficients, e.g.~with \emph{principally normal symbol} \cite[chapter~28]{HormanderIV}.
	\end{remark}
	We define the class of admissible test-inclusions, used for the reconstruction methods, as
	\begin{equation*}
		\mathcal{A} = \{\, C \Subset \Omega \mid C \text{ is the closure of an open set and has connected complement} \,\}.
	\end{equation*}
	\begin{assumption}  \label{assump:recon} \needspace{2\baselineskip}  {}\
		\begin{enumerate}[\rm(i)]
			\item Assume there are \emph{known} bounds on $A_D$, i.e.\ scalars $0 < \alpha \leq \beta$ such that
			\begin{equation*}
				\alpha I \leq A_D \leq \beta I
			\end{equation*}
			in the Loewner order in $\Omega$.
			\item Assume that $D\Subset \Omega$ and is the closure of an open set.
			\item For every $x\in\partial D^\bullet$ and every open neighborhood $W$ of $x$, assume there exists a relatively open connected set $V\subset D^\bullet\cap W$ that intersects $\partial D^\bullet$, satisfying either of two options:
			\begin{enumerate}[\rm(a)]
				\item $A_D - A_0$ is positive semidefinite in $V$, and there exists an open ball $B\subset V$ on which $A_D - A_0$ is uniformly positive definite.
				\item $A_D - A_0$ is negative semidefinite in $V$, and there exists an open ball $B\subset V$ on which $A_D - A_0$ is uniformly negative definite.
			\end{enumerate}
			\item Assume that $A_0$ satisfies the UCP.
		\end{enumerate}
	\end{assumption}
	We give a few remarks on these assumptions.
	\begin{remark}[Remarks on Assumption~\ref{assump:recon}] {}\
		\begin{enumerate}[(a)]
			\item In case $\Gamma = \partial\Omega$, then $C\Subset\Omega$ in the definition of $\mathcal{A}$ can be replaced by $C\subseteq\overline{\Omega}$, and $D\Subset\Omega$ in Assumption~\ref{assump:recon}(ii) can be replaced by $D\subseteq\overline{\Omega}$. Also, here ``connected complement'' is in $\R^d$.
			\item Note that Assumption~\ref{assump:recon}(iii) only relates to what happens close to $\partial D^\bullet$, so further into $D$ we may have that $A_D-A_0$ is e.g.~indefinite. Moreover, the definiteness condition is not exactly at $\partial D^\bullet$ but just close to $\partial D^\bullet$. This allows for both a jump condition or a continuous deviation from $A_0$; in the isotropic case this mimics the condition of a strict local increase/decrease from the background conductivity when entering $D$ from the outside~\cite{Garde2022b}.
		\end{enumerate}
	\end{remark}
	For a measurable set $C$, we define the test-coefficients
	\begin{equation*}
		A^-_{C} = \begin{dcases}
			\alpha I & \text{in } C \\
			A_0 & \text{in } \Omega\setminus C
		\end{dcases}
		\quad 
		\text{and}
		\quad
		A^+_{C} = \begin{dcases}
			\beta I & \text{in } C \\
			A_0 & \text{in } \Omega\setminus C,
		\end{dcases}
	\end{equation*}
	and the corresponding test-operators
	\begin{equation*}
		\Lambda^-_C = \Lambda(A^-_C) \quad \text{and}\quad \Lambda^+_C = \Lambda(A^+_C).
	\end{equation*}
	\begin{theorem} \label{thm:main} \needspace{2\baselineskip} {}\
		Under Assumption~\ref{assump:recon}(i), for any measurable $C\subseteq \overline{\Omega}$ we have
		\begin{equation*}
			D \subseteq C \quad \text{implies} \quad \Lambda^-_C \geq \Lambda(A_D) \geq \Lambda^+_C.
		\end{equation*}
		Under all of Assumption~\ref{assump:recon}, for any $C\in\mathcal{A}$ we have
		\begin{equation*}
			\Lambda^-_C \geq \Lambda(A_D) \geq \Lambda^+_C \quad \text{implies} \quad D \subseteq C.
		\end{equation*}
	\end{theorem}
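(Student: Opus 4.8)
The plan is to prove the two implications separately. For the \textbf{forward implication}, assume $D\subseteq C$ and Assumption~\ref{assump:recon}(i). Since $A_D=A_0$ on $\Omega\setminus D\supseteq\Omega\setminus C$, we have $A_D-A^-_C=0$ a.e.\ on $\Omega\setminus C$, while on $C$ we have $A_D-A^-_C=A_D-\alpha I\geq 0$; hence $A_D\geq A^-_C$ pointwise a.e. Applying the left-hand inequality of Proposition~\ref{prop:mono} with $A_1=A^-_C$, $A_2=A_D$ gives $\inner{f,(\Lambda^-_C-\Lambda(A_D))f}\geq\int_\Omega(A_D-A^-_C)\nabla u^{A_D}_f\cdot\overline{\nabla u^{A_D}_f}\,\di x\geq 0$ for every $f\in L^2_\diamond(\Gamma)$, so $\Lambda^-_C\geq\Lambda(A_D)$; the same argument with $A_1=A_D$, $A_2=A^+_C$ (noting $A^+_C-A_D\geq 0$) yields $\Lambda(A_D)\geq\Lambda^+_C$. (Here $A^\pm_C\in\Hsa(\Omega)$, being bounded, Hermitian and bounded below, so $\Lambda^\pm_C$ are well-defined self-adjoint operators.)

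\textbf{Converse --- setup.} We prove the contrapositive: assuming all of Assumption~\ref{assump:recon} and $D\not\subseteq C$, the two-sided operator inequality must fail. As $C\in\mathcal{A}$ has connected complement we have $C=C^\bullet$, so $D\not\subseteq C$ is equivalent to $D^\bullet\not\subseteq C$; moreover $\partial D^\bullet\not\subseteq C$, since otherwise $\overline{(D^\bullet)^\circ}\subseteq(D^\bullet)^\circ\cup C$ would split the connected set $\R^d\setminus C$ into the nonempty open sets $(\R^d\setminus C)\cap(D^\bullet)^\circ$ and $(\R^d\setminus C)\setminus\overline{(D^\bullet)^\circ}$. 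The technical crux --- the step I expect to be the main obstacle --- is a topological selection exploiting that $C,D^\bullet\Subset\Omega$ have connected complements: one chooses $x_0\in\partial D^\bullet\setminus C$ and an open neighbourhood $W\subseteq\Omega$ of $x_0$ with $\overline{W}\cap C=\emptyset$ such that every connected component of $\Omega\setminus(C\cup D^\bullet)$ whose closure meets $\partial D^\bullet\cap W$ has closure meeting $\Gamma$ (the anisotropic analogue of the topological lemmas in the isotropic monotonicity literature). Now apply Assumption~\ref{assump:recon}(iii) at $x_0$ with this $W$: there is a relatively open connected $V\subset D^\bullet\cap W$ meeting $\partial D^\bullet$ and an open ball $B\subset V$ which, after shrinking, satisfies $\overline{B}\subset V$. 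Set $K:=C\cup(D\setminus V)$; then $K$ is closed, $\overline{B}\cap K=\emptyset$, and $A^\pm_C=A_0$ on $V$. By the choice of $x_0,W$, the connected component $U_0$ of $\Omega\setminus K$ containing $B$ also contains $V$ together with a piece of $\Omega\setminus D^\bullet$ abutting $V\cap\partial D^\bullet$, so $\overline{U_0}$ meets $\Gamma$; and since $A^\pm_C=A_0$ on $U_0$, these test-coefficients inherit the UCP there from Assumption~\ref{assump:recon}(iv). This supplies exactly the hypotheses of Theorem~\ref{thm:locpot} with concentration set $B$, vanishing set $K$, and conductivity $A^+_C$ (or $A^-_C$).

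\textbf{Converse --- case (iii)(a).} Here $A_D-A_0\geq 0$ on $V$ and $A_D-A_0\geq cI$ on $B$ for some $c>0$. The right-hand inequality of Proposition~\ref{prop:mono} with $A_1=A_D$, $A_2=A^+_C$ (so the test solution is $u^{A^+_C}_f$) gives
\begin{equation*}
\inner{f,(\Lambda(A_D)-\Lambda^+_C)f}\leq\int_\Omega M\nabla u^{A^+_C}_f\cdot\overline{\nabla u^{A^+_C}_f}\,\di x,\qquad M:=A^+_C A_D^{-1}(A^+_C-A_D).
\end{equation*}
By Proposition~\ref{prop:bnd}, the quadratic form of $M$ is pointwise sign-equivalent to that of $A^+_C-A_D$ and uniformly bounded: it is $\geq 0$ on $C$ (there $A^+_C-A_D=\beta I-A_D\geq 0$), $\leq 0$ on $V$ (there $A^+_C-A_D=A_0-A_D\leq 0$), $\leq -c'I$ on $B$ for some $c'>0$, vanishes off $C\cup D$, and satisfies $\norm{M}_*\leq M_0<\infty$. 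Discarding the nonpositive $V$-contribution and using $C\cup((D\setminus C)\setminus V)\subseteq K$,
\begin{equation*}
\inner{f,(\Lambda(A_D)-\Lambda^+_C)f}\leq M_0\int_K\abs{\nabla u^{A^+_C}_f}^2\,\di x-c'\int_B\abs{\nabla u^{A^+_C}_f}^2\,\di x.
\end{equation*}
Theorem~\ref{thm:locpot} applied to $A^+_C$ furnishes a sequence $(f_n)\subset L^2_\diamond(\Gamma)$ with $\int_B\abs{\nabla u^{A^+_C}_{f_n}}^2\,\di x\to\infty$ and $\int_K\abs{\nabla u^{A^+_C}_{f_n}}^2\,\di x\to 0$, whence the right-hand side tends to $-\infty$; so $\inner{f_n,(\Lambda(A_D)-\Lambda^+_C)f_n}<0$ for large $n$ and $\Lambda(A_D)\not\geq\Lambda^+_C$.

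\textbf{Converse --- case (iii)(b), and the obstacle.} This case is symmetric, and in fact needs only the (elementary) left-hand inequality of Proposition~\ref{prop:mono}, hence not Proposition~\ref{prop:bnd}: with $A_D-A_0\leq 0$ on $V$ and $A_D-A_0\leq -cI$ on $B$, taking $A_1=A_D$, $A_2=A^-_C$ gives $\inner{f,(\Lambda(A_D)-\Lambda^-_C)f}\geq\int_\Omega(A^-_C-A_D)\nabla u^{A^-_C}_f\cdot\overline{\nabla u^{A^-_C}_f}\,\di x$, and the integrand form is $\leq 0$ on $C$, $\geq 0$ on $V$, vanishes off $C\cup D$, and is $\geq cI$ on $B$; localizing $A^-_C$-potentials on $B$ against $K$ drives the right-hand side to $+\infty$, so $\Lambda^-_C\not\geq\Lambda(A_D)$. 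In either case the two-sided operator inequality fails, which is the contrapositive. The analytic content is routine once Propositions~\ref{prop:mono} and~\ref{prop:bnd} and Theorem~\ref{thm:locpot} are in hand; the delicate part is the topological selection of $x_0$ and $W$ and the verification that $U_0$ reaches $\Gamma$ while carrying the UCP, since $C$ and $D^\bullet$ may be intricately interlocked --- this is precisely where the connectedness of $\R^d\setminus C$ and of $\R^d\setminus D^\bullet$ is essential.
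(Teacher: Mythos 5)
Your proof is correct and follows essentially the same strategy as the paper: the forward direction via the simple monotonicity of Proposition~\ref{prop:mono}, and the converse by localizing potentials in a connected set avoiding $C$ that carries the definiteness sign supplied by Assumption~\ref{assump:recon}(iii), then using the matching side of Proposition~\ref{prop:mono} (and Proposition~\ref{prop:bnd} in the positive‑definite case). Two cosmetic differences from the paper's write‑up are worth noting. First, you apply Theorem~\ref{thm:locpot} directly to $A^{\pm}_C$ on $U_0$ (using that $A^{\pm}_C=A_0$ there so the UCP transfers); the paper instead produces localized potentials for $A_0$ and then transfers them to $A^{\pm}_C$ via Proposition~\ref{prop:simultloc}, which avoids having to re‑invoke the UCP for the test‑coefficients but is logically equivalent. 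Second, you encode the localization region as the connected component $U_0$ of $\Omega\setminus K$ with $K=C\cup(D\setminus V)$, whereas the paper builds a connected $U\subset\overline\Omega\setminus C$ reaching $\Gamma$ with $U\cap D^\bullet$ contained in the definite piece $V$; both hinge on the same topological selection (a connected route from $V$ through $\Omega\setminus(C\cup D^\bullet)$ to $\Gamma$), which you explicitly flag and which the paper also states only at a high level. Your observation that case~(iii)(b) needs only the elementary lower bound of Proposition~\ref{prop:mono} (and not Proposition~\ref{prop:bnd}) matches the paper's Case~2 exactly. The one small point to tighten: when invoking Proposition~\ref{prop:bnd}(ii) on $V$ in case~(a), the constants there require a uniform lower bound on $A_0$, which holds because $A_0\in\Hsa(\Omega)$, not because of Assumption~\ref{assump:recon}(i); this is implicit in your "pointwise sign‑equivalent" phrasing but worth making explicit.
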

	\begin{proof}
		The proof is given in Section~\ref{sec:mainproof}.
	\end{proof}
	
	We also introduce linearized versions of the test-operators, which are more suitable for numerical computations. Note that for $A\in\Hsa(\Omega)$ and $B\in L^\infty(\Omega)^{d\times d}$, the Fr\'echet derivative of $\Lambda$ evaluated at $A$ and in direction $B$ is given by (c.f.~\cite{Garde2022c}):
	\begin{equation*}
		\inner{f,\DLambda(A;B)f} = -\int_\Omega B^*\nabla u_f^A\cdot\overline{\nabla u_f^A}\,\di x.
	\end{equation*}
	To shorten the notation in the following result, we introduce the test-operators
	\begin{align*}
		\DLambda^+_{C} &= \DLambda\bigl(A_0;(\beta I - A_0)\chi_C\bigr), \\
		\DLambda^-_{C} &= \DLambda\bigl(A_0;(A_0 - \tfrac{\beta^2}{\alpha}I)\chi_C\bigr), 		
	\end{align*}
	where $\chi_C$ is a characteristic function on the set $C$.
	\begin{theorem} \label{thm:mainlinear}
		Pick $0<\alpha\leq\beta$ such that Assumption~\ref{assump:recon}(i) holds for both $A_D$ and $A_0$. For any measurable $C\subseteq \overline{\Omega}$ we have
		\begin{equation*}
			D \subseteq C \quad \text{implies} \quad\DLambda^-_{C} \geq \Lambda(A_D) - \Lambda(A_0) \geq \DLambda^+_{C}.
		\end{equation*}
		Under all of Assumption~\ref{assump:recon}, for any $C\in\mathcal{A}$ we have
		\begin{equation*}
			\DLambda^-_{C} \geq \Lambda(A_D) - \Lambda(A_0) \geq \DLambda^+_{C} \quad \text{implies} \quad D \subseteq C.
		\end{equation*}
	\end{theorem}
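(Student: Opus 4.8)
The plan is to deduce Theorem~\ref{thm:mainlinear} from Theorem~\ref{thm:main} by showing that the linearized test-operators $\DLambda^\pm_C$ sandwich the nonlinear ones $\Lambda^\pm_C - \Lambda(A_0)$ in the appropriate direction, so that the chain of operator inequalities in the two theorems line up. Concretely, the first (``$D\subseteq C$ implies'') half needs
\begin{equation*}
	\DLambda^-_C \geq \Lambda^-_C - \Lambda(A_0) \quad\text{and}\quad \Lambda^+_C - \Lambda(A_0) \geq \DLambda^+_C,
\end{equation*}
after which one chains with Theorem~\ref{thm:main}; the second (``implies $D\subseteq C$'') half needs the reverse inequalities, namely $\Lambda^-_C - \Lambda(A_0) \geq \DLambda^-_C$ and $\DLambda^+_C \geq \Lambda^+_C - \Lambda(A_0)$, so that the linearized hypothesis forces the nonlinear one and Theorem~\ref{thm:main} again applies. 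Thus the whole theorem reduces to two-sided comparisons between $\Lambda(A) - \Lambda(A_0)$ and $\DLambda(A_0; \cdot)$ for the two specific choices $A = A^\pm_C$.

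The key tool is the monotonicity inequality of Proposition~\ref{prop:mono} (stated in the introduction), applied with $A_1 = A_0$ and $A_2 = A^\pm_C$, combined with the explicit formula for the Fréchet derivative $\inner{f,\DLambda(A_0;B)f} = -\int_\Omega B^*\nabla u_f^{A_0}\cdot\overline{\nabla u_f^{A_0}}\,\di x$. For the $+$ case, write $B^+ = (\beta I - A_0)\chi_C = -(A^+_C - A_0)$, so $\DLambda^+_C$ has quadratic form $\int_\Omega (A^+_C-A_0)\nabla u_f^{A_0}\cdot\overline{\nabla u_f^{A_0}}\,\di x$ (using self-adjointness), whereas the left monotonicity bound gives $\inner{f,(\Lambda(A_0)-\Lambda^+_C)f} \geq \int_\Omega (A^+_C - A_0)\nabla u_f^{A^+_C}\cdot\overline{\nabla u_f^{A^+_C}}\,\di x$; since $A^+_C - A_0 \geq 0$ everywhere (as $\beta I \geq A_0$ on $C$), one gets a lower bound, and the reverse needs the right monotonicity bound. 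For the $-$ case the coefficient $A^-_C A_0^{-1}(A^-_C - A_0)$ appears, and the choice of direction $(A_0 - \tfrac{\beta^2}{\alpha}I)\chi_C$ is designed precisely so that, on $C$, $\alpha I \cdot A_0^{-1}(\alpha I - A_0) = \alpha^2 A_0^{-1} - \alpha I$, and using $A_0 \leq \beta I$ one controls this by $\tfrac{\alpha^2}{\beta}I - \alpha I$ or similar — the arithmetic here has to match $(A_0 - \tfrac{\beta^2}{\alpha}I)$ up to the correct sign and scaling. I would carry this out by writing both quadratic forms over $C$ pointwise (the integrands agree on $\Omega\setminus C$ since $A^\pm_C = A_0$ there) and reducing to pointwise Loewner inequalities between $d\times d$ Hermitian matrices; the fact that $\alpha I \leq A_D$, $A_0$ enters through bounding $A_0^{-1}$.

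The main obstacle I expect is getting the $-$ case algebra exactly right: one must verify the pointwise matrix inequality relating $\alpha^2 A_0^{-1} - \alpha I$ (from $A^-_C A_0^{-1}(A^-_C-A_0)$ on $C$) to $\tfrac{\beta^2}{\alpha}I - A_0$ in the Loewner order, in both directions as needed, using only $\alpha I \leq A_0 \leq \beta I$. Because $A_0$ and $A_0^{-1}$ need not commute with the ``shift'' in any convenient basis, one should diagonalize $A_0(x)$ at each point and check the scalar inequalities $\alpha^2/t - \alpha$ versus $\beta^2/\alpha - t$ for $t\in[\alpha,\beta]$; I expect one of these to hold with room to spare and the other to be the tight one that forces the particular constants $\beta/\alpha$ and $\beta^2/\alpha$ in the definition of $\DLambda^-_C$. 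Once the pointwise inequalities are in hand, integrating against $|\nabla u_f^{A_0}|$-type densities and quoting Proposition~\ref{prop:mono} twice (once per direction) finishes each half, and then Theorem~\ref{thm:main} is invoked verbatim to close the argument.
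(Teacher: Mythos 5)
The first half of your plan can be made to work, although you have a small but important index mix-up. To compare with the quadratic form of $\DLambda(A_0;\cdot)$, which involves $\nabla u_f^{A_0}$, you must take $A_2 = A_0$ and $A_1 = A^\pm_C$ in Proposition~\ref{prop:mono} (recall the bounds there are expressed via $\nabla u_2$). For the $-$ case this produces $A_0(A^-_C)^{-1}(A_0 - A^-_C) = \tfrac{1}{\alpha}A_0^2 - A_0$ on $C$, not the $\alpha^2 A_0^{-1} - \alpha I$ you wrote (that would be what appears if you took $A_2 = A^-_C$, but then the test gradient is the wrong one). The pointwise comparison is $\tfrac{1}{\alpha}A_0^2 - A_0 \leq \tfrac{\beta^2}{\alpha}I - A_0$, which reduces to $A_0^2 \leq \beta^2 I$ and holds by Assumption~\ref{assump:recon}(i). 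Likewise, the first inequality of Proposition~\ref{prop:mono} gives $\Lambda^+_C - \Lambda(A_0) \geq \DLambda^+_C$ and the second gives $\DLambda^-_C \geq \Lambda^-_C - \Lambda(A_0)$. Chaining these with Theorem~\ref{thm:main} proves the first implication, which is a legitimate alternative route to the paper's: the paper instead applies Proposition~\ref{prop:mono} directly with $A_1 = A_D$, $A_2 = A_0$, never passing through $\Lambda^\pm_C$.

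The second half, however, has a genuine gap. You explicitly require $\Lambda^-_C - \Lambda(A_0) \geq \DLambda^-_C$ and $\DLambda^+_C \geq \Lambda^+_C - \Lambda(A_0)$. These are the exact opposites of the sandwich inequalities you just established (and in general they are strict inequalities, not equalities — e.g.\ in one dimension with $A_0 \equiv 1$ and a constant perturbation on $C$, the ND map is not an affine function of the coefficient). Both can therefore not hold simultaneously, and your reduction of the second half to Theorem~\ref{thm:main} breaks down. The structural reason is that $\DLambda^\pm_C$ linearizes \emph{at} $A_0$, so the error terms $\Lambda^\pm_C - \Lambda(A_0) - \DLambda^\pm_C$ carry a definite sign that helps in one direction only. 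The paper does not invoke Theorem~\ref{thm:main} at all for the converse; instead it reruns the localized-potentials machinery directly with $u_j = u_{f_j}^{A_0}$, combines Proposition~\ref{prop:mono} with Proposition~\ref{prop:bnd} to obtain $\inner{f_j,[\Lambda(A_D)-\Lambda(A_0)-\DLambda^+_C]f_j} \to -\infty$ (resp.\ the analogous quantity for $\DLambda^-_C$), and thereby falsifies the assumed operator inequality. You would need to do the same: the black-box statement of Theorem~\ref{thm:main} is not enough, and the specific sandwich inequalities you hoped for go the wrong way.
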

	\begin{proof}
		The proof is given in Section~\ref{sec:mainlinearproof}.
	\end{proof}
	
	\begin{remark}
		Under the given assumptions, Theorem~\ref{thm:main} gives
		\begin{equation*}
			D^\bullet = \cap\, \{\, C\in\mathcal{A} \mid \Lambda^-_C \geq \Lambda(A_D) \geq \Lambda^+_C \,\},
		\end{equation*}
		while Theorem~\ref{thm:mainlinear} gives
		\begin{equation*}
			D^\bullet = \cap\, \{\, C\in\mathcal{A} \mid \DLambda^-_{C} \geq \Lambda(A_D)-\Lambda(A_0) \geq \DLambda^+_{C} \,\}.
		\end{equation*}
		Moreover, from the proofs we also conclude:
		\begin{itemize}
			\item If in Assumption~\ref{assump:recon}(iii) there is only positive definiteness near $\partial D^\bullet$, we only need to check the inequalities $\Lambda(A_D)\geq \Lambda^+_C$ or $\Lambda(A_D)-\Lambda(A_0) \geq \DLambda^+_{C}$.
			\item If in Assumption~\ref{assump:recon}(iii) there is only negative definiteness near $\partial D^\bullet$, we only need to check the inequalities $\Lambda^-_C \geq \Lambda(A_D)$ or $\DLambda^-_{C} \geq \Lambda(A_D)-\Lambda(A_0)$.
		\end{itemize} 
	\end{remark}
	
	\section{Inner reconstruction of uniformly definite anisotropic inclusions}
	
	Under much stronger definiteness assumptions (not just near $\partial D^\bullet$ as before), we can also use the inner approach of the monotonicity method. We focus on the linearized version, so that the UCP requirement is only related to the background conductivity $A_0$.
	\begin{assumption}  \label{assump:recon2} \needspace{2\baselineskip}  {}\
		\begin{enumerate}[\rm(i)]
			\item Assume that we have either:
			\begin{enumerate}[\rm(a)]
				\item \emph{Positive definite inclusions}, $A_D - A_0 \geq cI$ in the Loewner order in $D$, for known $c>0$.
				\item \emph{Negative definite inclusions}, $A_D - A_0 \leq -cI$ in the Loewner order in $D$, for known $c>0$.
			\end{enumerate}
			\item Assume that $D\Subset \Omega$ and is the closure of an open set.
			\item Assume that $A_0$ satisfies the UCP.
		\end{enumerate}
	\end{assumption}
	
	Again, to shorten the notation in the following results, we introduce the test-operators
	\begin{align*}
		\DLambdah^+_{B} &= \DLambda\bigl(A_0;c(\tfrac{\alpha}{\beta})^2\chi_B I\bigr), \\
		\DLambdah^-_{B} &= \DLambda\bigl(A_0;-c\chi_B I\bigr), 		
	\end{align*}
	where $\chi_B$ is a characteristic function on the set $B$; note the differences compared to the previous section.
	
	\begin{theorem} \label{thm:mainlinearinnerpos}
		Assume that $A_D$ has \emph{positive definite inclusions} in $D$, corresponding to case~(a) of Assumption~\ref{assump:recon2}(i). Let $A_0 \geq \alpha I$ and assume $A_D \leq \beta I$ in the Loewner order in $\Omega$, for $\alpha,\beta>0$. For any measurable $B\subseteq \Omega$ we have
		\begin{equation*}
			B \subseteq D \quad \text{implies} \quad \Lambda(A_D) - \Lambda(A_0) \leq \DLambdah^+_{B}.
		\end{equation*}
		Under all of Assumption~\ref{assump:recon2}, for any open set $B\subseteq \Omega$ we have
		\begin{equation*}
			\Lambda(A_D) - \Lambda(A_0) \leq \DLambdah^+_{B} \quad \text{implies} \quad B \subset D^\bullet.
		\end{equation*}
	\end{theorem}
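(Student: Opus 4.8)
The plan is to split the proof into the two implications, mirroring the structure of Theorem~\ref{thm:main} and Theorem~\ref{thm:mainlinear} but now using the \emph{lower} monotonicity bound (the left inequality in Proposition~\ref{prop:mono}) as the primary tool, since for inner reconstruction we want to bound $\Lambda(A_D)-\Lambda(A_0)$ from \emph{above} by a test-operator. First, for the forward (easy) direction, assume $B\subseteq D$. Apply the monotonicity inequality with $A_1 = A_0$ and $A_2 = A_D$: the lower bound gives $\inner{f,(\Lambda(A_D)-\Lambda(A_0))f} \le -\int_\Omega (A_0-A_D)\nabla u_f^{A_D}\cdot\overline{\nabla u_f^{A_D}}\,\di x = \int_\Omega (A_D-A_0)\nabla u_f^{A_D}\cdot\overline{\nabla u_f^{A_D}}\,\di x$. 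Wait --- one must be careful about the orientation; the correct chain here is $\inner{f,(\Lambda(A_0)-\Lambda(A_D))f}\ge \int_\Omega(A_D-A_0)\nabla u_f^{A_D}\cdot\overline{\nabla u_f^{A_D}}\,\di x$, i.e.\ $\Lambda(A_D)-\Lambda(A_0)\le -\int_\Omega(A_D-A_0)\nabla u_f^{A_D}\cdot\overline{\nabla u_f^{A_D}}\,\di x$ is \emph{false in general}; rather one wants the \emph{upper} bound of Proposition~\ref{prop:mono} applied the other way. The cleanest route is: from the monotonicity principle, $\Lambda(A_D)-\Lambda(A_0) \le \DLambda(A_0; A_D-A_0)$ does not hold directly either, so instead I would use that $A_D-A_0\ge cI$ on $D\supseteq B$ together with the localized-potentials-free comparison to reduce $\DLambda(A_0;(A_D-A_0)\chi_B)$ to $\DLambda(A_0; c\chi_B I)$, and separately absorb the factor $(\alpha/\beta)^2$ coming from the $A_2 A_1^{-1}$ correction in Proposition~\ref{prop:mono}, exactly as the coefficient $c(\alpha/\beta)^2$ in $\DLambdah^+_B$ is designed to encode. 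So the forward direction is: bound $\Lambda(A_D)-\Lambda(A_0)$ above using the $A_2A_1^{-1}(A_2-A_1)$ form, use $\alpha I\le A_0$ and $A_D\le\beta I$ to control $A_DA_0^{-1}$ by $\beta/\alpha$, then use $B\subseteq D$ and $A_D-A_0\ge cI$ on $D$ to replace the integrand on $B$ by $c(\alpha/\beta)^2\chi_B I$ acting on the \emph{same} gradients, and finally recognize the resulting quadratic form as $\inner{f,\DLambdah^+_B f}$ by the Fréchet-derivative formula; the parts of $D$ outside $B$ contribute a manifestly nonpositive (for the negative-of-an-integral) or absorbed term. I would write this as a single display chain of inequalities.

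For the converse (the substantive direction), assume $\Lambda(A_D)-\Lambda(A_0)\le\DLambdah^+_B$ and suppose for contradiction that $B\not\subseteq D^\bullet$. Then, since $B$ is open and $D^\bullet$ is closed with connected complement, there is a nonempty open ball $B_0\subseteq B\setminus D^\bullet$ with $B_0$ contained in the (connected) complement of $D^\bullet$ and touching $\Gamma$ through that complement --- more precisely $B_0\Subset\Omega\setminus D^\bullet$ and $\Omega\setminus\overline{D}$ is connected and meets $\Gamma$, so Theorem~\ref{thm:locpot} (localized potentials for $A_0$, using Assumption~\ref{assump:recon2}(iii)) provides a sequence $f_n\in L^2_\diamond(\Gamma)$ with $\int_{B_0}|\nabla u_{f_n}^{A_0}|^2\,\di x\to\infty$ while $\int_{D}|\nabla u_{f_n}^{A_0}|^2\,\di x$ stays bounded. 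Plugging $f_n$ into the assumed inequality, the right side $\inner{f_n,\DLambdah^+_B f_n} = -c(\alpha/\beta)^2\int_B|\nabla u_{f_n}^{A_0}|^2\,\di x \to -\infty$ because $B\supseteq B_0$. For the left side I would use the \emph{lower} monotonicity bound $\inner{f_n,(\Lambda(A_D)-\Lambda(A_0))f_n}\ge \int_\Omega(A_D-A_0)\nabla u_{f_n}^{A_0}\cdot\overline{\nabla u_{f_n}^{A_0}}\,\di x$ --- here one must check the bound is stated with $u_f^{A_0}$ (the background potential), which is the version one gets by swapping the roles; if Proposition~\ref{prop:mono} is stated only with $u_2$, apply it with $A_1=A_D$, $A_2=A_0$ to get $\inner{f_n,(\Lambda(A_D)-\Lambda(A_0))f_n}\ge \int_\Omega (A_D-A_0)A_D^{-1}\cdot$ \dots; in any case, since $A_D-A_0$ is supported in $D$ and there bounded below by $cI$ (or at least bounded in $*$-norm), this lower bound is controlled by $-\text{const}\cdot\int_D|\nabla u_{f_n}^{A_0}|^2\,\di x$ plus a term that is $O(1)$, hence bounded below. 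So the left side stays bounded below while the right side goes to $-\infty$, contradicting the inequality. This forces $B\subseteq D^\bullet$.

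The main obstacle I anticipate is \emph{getting the orientation of the monotonicity inequality exactly right} together with tracking the correct constants $(\alpha/\beta)^2$ and $c$ through the $A_2A_1^{-1}(A_2-A_1)$ correction term, because the ``positive definite inclusions'' case uses the bound in the direction opposite to the more familiar reconstruction-of-conductivity setup, and the localized potentials must be built for the \emph{background} $A_0$ (not $A_D$), which is why the theorem only needs the UCP for $A_0$. A secondary technical point is the topological step extracting $B_0\Subset\Omega\setminus D^\bullet$ from $B\not\subseteq D^\bullet$ and verifying that $\Omega\setminus\overline{D}$ (equivalently the complement of $D^\bullet$) is connected and reaches $\Gamma$ so that Theorem~\ref{thm:locpot} applies --- this should follow from $D\in\mathcal{A}$-type properties in Assumption~\ref{assump:recon2}(ii) and the definition of $D^\bullet$, but needs to be spelled out. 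Once those are in place, everything else is the routine plug-in-and-compare argument, and I would present the converse as: assume the contrary, extract $B_0$, invoke localized potentials, take limits in the assumed operator inequality, reach $-\infty \ge (\text{bounded below})$, contradiction.
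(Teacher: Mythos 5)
Your proposal follows essentially the same route as the paper: in the forward direction, apply the upper bound in Proposition~\ref{prop:mono} with $A_1=A_D$, $A_2=A_0$ (so the quadratic form is in $u_0 = u_f^{A_0}$), combine with Proposition~\ref{prop:bnd}(ii) to get $A_0A_D^{-1}(A_D-A_0)\geq c(\alpha/\beta)^2 I$ in $D$, and compare with $\DLambdah^+_B$; in the converse, pick an open ball $\widehat{B}\subset B\cap(\overline{\Omega}\setminus D^\bullet)$, invoke Theorem~\ref{thm:locpot} for $A_0$ with the relatively open connected piece $U$ of $\overline{\Omega}\setminus D^\bullet$ containing $\widehat{B}$ and touching $\Gamma$, and use the lower bound of Proposition~\ref{prop:mono} to control $\Lambda(A_D)-\Lambda(A_0)$ on the localized sequence while $\DLambdah^+_B$ blows up to $-\infty$. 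The several sign flips you write in-line (e.g.\ $\inner{f,(\Lambda(A_D)-\Lambda(A_0))f}\ge\int_\Omega(A_D-A_0)\nabla u_0\cdot\overline{\nabla u_0}$ should read $\int_\Omega(A_0-A_D)\nabla u_0\cdot\overline{\nabla u_0}$) do not compromise the argument because, as you note, the support condition $\suppm(A_D-A_0)\subseteq D\subset\Omega\setminus U$ makes that term $O(1)$ regardless of sign — but in a final write-up you should fix them, along with the stray $\overline{D}$ that should be $D^\bullet$ in the topological step.
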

	\begin{proof}
		The proof is given in Section~\ref{sec:mainposproof}.
	\end{proof}
	
	\begin{theorem} \label{thm:mainlinearinnerneg}
		Assume that $A_D$ has \emph{negative definite inclusions} in $D$, corresponding to case~(b) of Assumption~\ref{assump:recon2}(i). For any measurable $B\subseteq \Omega$ we have
		\begin{equation*}
			B \subseteq D \quad \text{implies} \quad \DLambdah^-_{B} \leq \Lambda(A_D) - \Lambda(A_0).
		\end{equation*}
		Under all of Assumption~\ref{assump:recon2}, for any open set $B\subseteq \Omega$ we have
		\begin{equation*}
			\DLambdah^-_{B} \leq \Lambda(A_D) - \Lambda(A_0) \quad \text{implies} \quad B \subset D^\bullet.
		\end{equation*}
	\end{theorem}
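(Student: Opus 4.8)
The plan is to mirror the structure of Theorem~\ref{thm:mainlinearinnerpos}, adapting each step to the sign-reversed situation of \emph{negative definite inclusions}. Throughout I would use the linearized monotonicity bounds together with the localized-potentials result (Theorem~\ref{thm:locpot}).

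\emph{Forward direction ($B\subseteq D$ implies the operator inequality).} Recall that for $A\in\Hsa(\Omega)$ and $B_{\mathrm{dir}}\in L^\infty(\Omega)^{d\times d}$ one has $\inner{f,\DLambda(A_0;B_{\mathrm{dir}})f} = -\int_\Omega B_{\mathrm{dir}}\nabla u_f^{A_0}\cdot\overline{\nabla u_f^{A_0}}\,\di x$ (the $*$ drops since we evaluate on quadratic forms and $B_{\mathrm{dir}}$ will be chosen self-adjoint). Using the right-hand monotonicity inequality from Proposition~\ref{prop:mono} with $A_1 = A_D$, $A_2 = A_0$ — or the appropriate variant giving a lower bound for $\Lambda(A_D)-\Lambda(A_0)$ in terms of $\int_\Omega(A_0-A_D)\nabla u_f^{A_0}\cdot\overline{\nabla u_f^{A_0}}$ — and then invoking $A_D - A_0 \leq -cI$ on $D$ together with $A_D - A_0 = 0$ on $\Omega\setminus D$, I would estimate
\begin{equation*}
	\inner{f,(\Lambda(A_D)-\Lambda(A_0))f} \geq \int_\Omega (A_0 - A_D)\nabla u_f^{A_0}\cdot\overline{\nabla u_f^{A_0}}\,\di x \geq c\int_D \abs{\nabla u_f^{A_0}}^2\,\di x \geq c\int_B \abs{\nabla u_f^{A_0}}^2\,\di x,
\end{equation*}
where the last step uses $B\subseteq D$. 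The right side is exactly $\inner{f,\DLambdah^-_B f}$ by definition of $\DLambdah^-_B = \DLambda(A_0;-c\chi_B I)$. This gives $\DLambdah^-_B \leq \Lambda(A_D)-\Lambda(A_0)$. I should double-check the direction of the Proposition~\ref{prop:mono} inequality that applies when we want a \emph{lower} bound for the difference in this ordering; this is the one routine point of care, and it is essentially a sign bookkeeping task since $A_0 A_D^{-1}$-type factors only rescale a definite quantity (Proposition~\ref{prop:bnd}).

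\emph{Converse direction (the operator inequality implies $B\subseteq D^\bullet$).} I would argue by contraposition: suppose $B\not\subseteq D^\bullet$. Since $B$ is open and $D^\bullet$ is closed, there is an open ball $B_0 \subseteq B\setminus D^\bullet$, and because $D^\bullet$ has connected complement one can connect $B_0$ to $\Gamma$ through $\Omega\setminus D^\bullet$; invoking the localized potentials Theorem~\ref{thm:locpot} for the conductivity $A_0$ (which satisfies the UCP by Assumption~\ref{assump:recon2}(iii)), I obtain a sequence $f_n\in L^2_\diamond(\Gamma)$ with $\int_{B_0}\abs{\nabla u_{f_n}^{A_0}}^2\,\di x \to \infty$ while $\int_D \abs{\nabla u_{f_n}^{A_0}}^2\,\di x$ stays bounded (localizing energy into $B_0$ and away from $D$). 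Combining the \emph{upper} monotonicity bound $\inner{f_n,(\Lambda(A_D)-\Lambda(A_0))f_n} \leq \int_\Omega A_0 A_D^{-1}(A_0-A_D)\nabla u_{f_n}^{A_0}\cdot\overline{\nabla u_{f_n}^{A_0}}\,\di x$ with the pointwise bound from Proposition~\ref{prop:bnd} — here $A_D-A_0$ is only known to be negative semidefinite inside $D$ via Assumption~\ref{assump:recon2}(i)(b), so $A_0 A_D^{-1}(A_0-A_D)$ is positive semidefinite and bounded above on $D$ by a constant multiple of $c$ (using the global bounds $\alpha I \leq A_D$, or the $\beta$ bound as needed) — yields $\inner{f_n,(\Lambda(A_D)-\Lambda(A_0))f_n} \leq C\int_D \abs{\nabla u_{f_n}^{A_0}}^2\,\di x$, which is bounded. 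On the other hand $\inner{f_n,\DLambdah^-_B f_n} = -c\int_B \abs{\nabla u_{f_n}^{A_0}}^2\,\di x \leq -c\int_{B_0}\abs{\nabla u_{f_n}^{A_0}}^2\,\di x \to -\infty$. Hence eventually $\DLambdah^-_B \not\leq \Lambda(A_D)-\Lambda(A_0)$, contradicting the hypothesis; therefore $B\subseteq D^\bullet$.

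\emph{Main obstacle.} The routine estimates are straightforward; the delicate point is the converse, specifically arranging the localized potentials so that the energy concentrates in $B_0\subseteq B\setminus D^\bullet$ while remaining controlled on all of $D$ — this requires applying Theorem~\ref{thm:locpot} with the correct choice of the ``concentration'' region and the ``control'' region, and using that $\Omega\setminus D^\bullet$ is connected and meets $\Gamma$ to get a valid admissible configuration for the localized-potential construction. A secondary technical point is ensuring the global bounds invoked (the $\alpha,\beta$ controlling $A_D$) are available in this theorem's hypotheses; if only $\alpha I\leq A_D$ is needed for the upper-bound estimate, that should be made explicit, but since the statement as given does not list $\alpha,\beta$ for this case, I expect the bound on $A_0 A_D^{-1}(A_0-A_D)$ over $D$ follows purely from $A_D-A_0\leq -cI$ plus ellipticity of $A_D$, with no extra hypotheses needed beyond those already standing.
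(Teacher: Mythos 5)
Your proof follows the same route as the paper: the forward direction combines the linearized test operator with the lower-bound inequality from Proposition~\ref{prop:mono}, and the converse uses localized potentials (Theorem~\ref{thm:locpot}) concentrated in a ball of $B\setminus D^\bullet$ together with the upper-bound inequality and Proposition~\ref{prop:bnd} to violate the hypothesized operator inequality. The forward direction is correct as written.

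The converse, however, contains a sign error that as written breaks the contradiction. You assert $\inner{f_n,\DLambdah^-_B f_n} = -c\int_B\abs{\nabla u_{f_n}^{A_0}}^2\,\di x \to -\infty$. But from $\DLambdah^-_B = \DLambda(A_0;-c\chi_B I)$ and $\inner{f,\DLambda(A;B_{\mathrm{dir}})f} = -\int_\Omega B_{\mathrm{dir}}^*\nabla u_f^A\cdot\overline{\nabla u_f^A}\,\di x$, one gets $\inner{f_n,\DLambdah^-_B f_n} = +c\int_B\abs{\nabla u_{f_n}^{A_0}}^2\,\di x$ — which is exactly what you (correctly) used in your forward direction, so the two halves of your proposal are inconsistent. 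With the sign you wrote, $\inner{f_n,\DLambdah^-_B f_n}\to -\infty$ while $\inner{f_n,(\Lambda(A_D)-\Lambda(A_0))f_n}$ stays bounded (in fact $\to 0$, since $\suppm(A_0-A_D)\subseteq\Omega\setminus U$ and the localized energy there vanishes); that gives $\inner{f_n,\DLambdah^-_B f_n}\leq\inner{f_n,(\Lambda(A_D)-\Lambda(A_0))f_n}$ eventually, which is compatible with the assumed inequality $\DLambdah^-_B\leq\Lambda(A_D)-\Lambda(A_0)$ rather than contradicting it. You flagged sign bookkeeping as the "one routine point of care," but located it in the forward direction (where you got it right); the slip is in the converse. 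Once corrected, $\inner{f_n,\DLambdah^-_B f_n}\to+\infty$ does exceed the bounded sequence, and the contradiction follows. This is then equivalent to the paper's presentation, which bounds the single quadratic form $\inner{f_j,[\Lambda(A_D)-\Lambda(A_0)-\DLambdah^-_B]f_j}$ from above by $-c\int_{\widehat{B}}\abs{\nabla u_j}^2\,\di x + \norm{A_0A_D^{-1}(A_0-A_D)}_*\int_{\Omega\setminus U}\abs{\nabla u_j}^2\,\di x\to-\infty$, rather than treating the two terms separately.
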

	\begin{proof}
		The proof is given in Section~\ref{sec:mainnegproof}.
	\end{proof}
	
	\begin{remark}
		Under the given assumptions, for positive definite inclusions, Theorem~\ref{thm:mainlinearinnerpos} gives
		\begin{equation*}
			D^\circ \subseteq \cup\, \{\, B\subseteq\Omega \text{ open ball} \mid \Lambda(A_D)-\Lambda(A_0) \leq \DLambdah^+_{B} \,\} \subset D^\bullet,
		\end{equation*}
		while for negative definite inclusions, Theorem~\ref{thm:mainlinearinnerneg} gives
		\begin{equation*}
			D^\circ \subseteq \cup\, \{\, B\subseteq\Omega \text{ open ball} \mid \DLambdah^-_{B} \leq \Lambda(A_D)-\Lambda(A_0) \,\} \subset D^\bullet.
		\end{equation*}
		One can of course replace ``open ball'' by some other basis of open sets for the Euclidean topology.
	\end{remark}
	
	\begin{remark}
		In the isotropic case, one can replace the factor $(\frac{\alpha}{\beta})^2$ in $\DLambdah^+_B$ by the larger factor~$\frac{\alpha}{\beta}$. This is because the same replacement holds in Proposition~\ref{prop:bnd}(ii) for isotropic coefficients, see e.g.~\cite[example~4.4]{Harrach13}.
	\end{remark}	
	
	\section{Operator inequalities for Neumann-to-Dirichlet maps} \label{sec:monoineq}
	
	Although we will only consider coefficients from $\Hsa(\Omega)$ in the reconstruction methods, we still prove the following more general inequalities based on $\mathcal{H}(\Omega)$ for possible future reference. See also~\cite[Lemma~2.1]{Harrach2015} for the isotropic case with complex conductivity.
	
	\begin{theorem} \label{thm:generalmono}
		Let $A_1,A_2\in\mathcal{H}(\Omega)$ and $f\in L^2_\diamond(\Gamma)$. For $j\in\{1,2\}$ we denote $\Lambda_j = \Lambda(A_j)$ and $u_j = u_f^{A_j}$. Assume $\kappa\in\C$ is such that $\kappa A_1 \in \mathcal{H}(\Omega)$. We also define
		\begin{align*}
			\mathcal{B}_\kappa &= (\kappa A_1)\upI[(\kappa A_1)\upR]^{-1}(\kappa A_1)\upI, \\
			\mathcal{C}_\kappa &=  (\kappa A_2)\upI[(\kappa A_1)\upR]^{-1}(\kappa A_2)\upI + \I\bigl[ (\kappa A_2)\upR[(\kappa A_1)\upR]^{-1}(\kappa A_2)\upI - (\kappa A_2)\upI[(\kappa A_1)\upR]^{-1}(\kappa A_2)\upR \bigr].
		\end{align*}
		Then
		\begin{align}
			\inner{f,[\overline{\kappa}(\Lambda_1-\Lambda_2)]\upR f} &\geq \int_\Omega \bigl[[\kappa(A_2-A_1)]\upR - \mathcal{B}_\kappa \bigr]\nabla u_2\cdot \overline{\nabla u_2}\,\di x, \label{eq:mono1} \\
			\inner{f,[\overline{\kappa}(\Lambda_1-\Lambda_2)]\upR f} &\leq \int_\Omega \bigl[(\kappa A_2)\upR [(\kappa A_1)\upR]^{-1}[\kappa(A_2-A_1)]\upR + \mathcal{C}_\kappa\bigr]\nabla u_2\cdot\overline{\nabla u_2}\,\di x. \label{eq:mono2}
		\end{align}
	\end{theorem}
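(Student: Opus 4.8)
The plan is to establish, for $A_1\in\mathcal H(\Omega)$ and any $A_2\in L^\infty(\Omega)^{d\times d}$ such that $u_2=u^{A_2}_f$ exists, the two one-sided estimates
\begin{equation*}
\int_\Omega\bigl[A_2\upR-A_1^*(A_1\upR)^{-1}A_1\bigr]\nabla u_2\cdot\overline{\nabla u_2}\,\di x\ \le\ \inner{f,(\Lambda_1-\Lambda_2)\upR f}\ \le\ \int_\Omega\bigl[A_2^*(A_1\upR)^{-1}A_2-A_2\upR\bigr]\nabla u_2\cdot\overline{\nabla u_2}\,\di x,
\end{equation*}
and then deduce \eqref{eq:mono1}--\eqref{eq:mono2}. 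Indeed, since $u^{\mu A}_f=\mu^{-1}u^A_f$ and $\Lambda(\mu A)=\mu^{-1}\Lambda(A)$ for any scalar $\mu$ (the latter whenever $\mu A$ admits the unique solution $\mu^{-1}u^A_f$), applying these estimates to the pair $(\kappa A_1,\kappa A_2)$ and using $\inner{f,[\overline\kappa(\Lambda_1-\Lambda_2)]\upR f}=|\kappa|^2\redel\inner{f,\kappa^{-1}(\Lambda_1-\Lambda_2)f}$ together with $M\nabla(\kappa^{-1}u_2)\cdot\overline{\nabla(\kappa^{-1}u_2)}=|\kappa|^{-2}M\nabla u_2\cdot\overline{\nabla u_2}$ makes the powers of $|\kappa|$ cancel; the algebraic identity $B^*(B\upR)^{-1}B=B\upR+B\upI(B\upR)^{-1}B\upI$ (applied to $B=\kappa A_1$, together with an analogous expansion of $N^*(M\upR)^{-1}N$ for $N=\kappa A_2$, $M=\kappa A_1$) then turns the bracketed matrices into $[\kappa(A_2-A_1)]\upR-\mathcal B_\kappa$ and $(\kappa A_2)\upR[(\kappa A_1)\upR]^{-1}[\kappa(A_2-A_1)]\upR+\mathcal C_\kappa$. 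Only $\kappa A_1\in\mathcal H(\Omega)$ is used in this way, which is why the estimates above are stated without requiring $A_2\in\mathcal H(\Omega)$ — note $\kappa A_2$ need not lie in $\mathcal H(\Omega)$.

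For the lower bound, put $w=u_1-u_2\in H^1_\diamond(\Omega)$, so that subtracting the two instances of \eqref{eq:weak} gives $\int_\Omega A_1\nabla w\cdot\overline{\nabla\phi}\,\di x=\int_\Omega(A_2-A_1)\nabla u_2\cdot\overline{\nabla\phi}\,\di x$ for all $\phi\in H^1_\diamond(\Omega)$. Starting from $\inner{f,(\Lambda_1-\Lambda_2)\upR f}=\int_\Omega A_1\upR\nabla u_1\cdot\overline{\nabla u_1}\,\di x-\int_\Omega A_2\upR\nabla u_2\cdot\overline{\nabla u_2}\,\di x$ (by \eqref{eq:lambdaR}), inserting $\nabla u_1=\nabla u_2+\nabla w$ and evaluating the cross term $2\redel\int_\Omega A_1\upR\nabla u_2\cdot\overline{\nabla w}\,\di x$ by means of the displayed relation at $\phi=u_2$ (writing $A_1\upR$ as $A_1-\I A_1\upI$ and as $A_1^*+\I A_1\upI$ in its two occurrences) produces the exact identity
\begin{equation*}
\inner{f,(\Lambda_1-\Lambda_2)\upR f}=\int_\Omega(A_2-A_1)\upR\nabla u_2\cdot\overline{\nabla u_2}\,\di x+\int_\Omega\Bigl[A_1\upR\nabla w\cdot\overline{\nabla w}-2\imdel\bigl(A_1\upI\nabla u_2\cdot\overline{\nabla w}\bigr)\Bigr]\,\di x.
\end{equation*}
The bracketed integrand is bounded below pointwise by completing squares: for Hermitian $R>0$, Hermitian $S$ and $\xi,\eta\in\C^d$,
\begin{equation*}
R\eta\cdot\overline\eta-2\imdel\bigl(S\xi\cdot\overline\eta\bigr)=R\bigl(\eta+\I R^{-1}S\xi\bigr)\cdot\overline{\bigl(\eta+\I R^{-1}S\xi\bigr)}-SR^{-1}S\xi\cdot\overline\xi\ \ge\ -SR^{-1}S\xi\cdot\overline\xi,
\end{equation*}
applied with $R=A_1\upR$, $S=A_1\upI$, $\eta=\nabla w$, $\xi=\nabla u_2$. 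Since $(A_2-A_1)\upR-A_1\upI(A_1\upR)^{-1}A_1\upI=A_2\upR-A_1^*(A_1\upR)^{-1}A_1$, this gives the claimed lower bound.

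For the upper bound, set $\sigma_j=A_j\nabla u_j$ and $\tau=\sigma_1-\sigma_2$. Subtracting the two instances of \eqref{eq:weak} gives $\int_\Omega\tau\cdot\overline{\nabla\phi}\,\di x=0$ for all $\phi\in H^1_\diamond(\Omega)$, hence $\int_\Omega\nabla u_1\cdot\overline\tau\,\di x=0$, so that $\inner{f,\Lambda_1\upR f}=\redel\int_\Omega\nabla u_1\cdot\overline{\sigma_1}\,\di x=\redel\int_\Omega\nabla u_1\cdot\overline{\sigma_2}\,\di x$. Applying the Cauchy--Schwarz inequality in $L^2(\Omega)^d$ to the pairing of $(A_1\upR)^{1/2}\nabla u_1$ with $(A_1\upR)^{-1/2}\sigma_2$, and using $\int_\Omega A_1\upR\nabla u_1\cdot\overline{\nabla u_1}\,\di x=\inner{f,\Lambda_1\upR f}\ge0$, we may divide by $\inner{f,\Lambda_1\upR f}^{1/2}$ to get
\begin{equation*}
\inner{f,\Lambda_1\upR f}\ \le\ \int_\Omega(A_1\upR)^{-1}\sigma_2\cdot\overline{\sigma_2}\,\di x=\int_\Omega A_2^*(A_1\upR)^{-1}A_2\,\nabla u_2\cdot\overline{\nabla u_2}\,\di x;
\end{equation*}
subtracting $\inner{f,\Lambda_2\upR f}=\int_\Omega A_2\upR\nabla u_2\cdot\overline{\nabla u_2}\,\di x$ yields the upper bound.

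The main obstacle is not any individual estimate — each is only a few lines — but the bookkeeping: one must keep the Hermitian adjoint of matrices separate from complex conjugation of the bilinear ``dot'' pairing while expanding the quadratic forms, and then verify that the compact matrix expressions above genuinely coincide with the $\kappa$-dependent right-hand sides $[\kappa(A_2-A_1)]\upR-\mathcal B_\kappa$ and $(\kappa A_2)\upR[(\kappa A_1)\upR]^{-1}[\kappa(A_2-A_1)]\upR+\mathcal C_\kappa$. A secondary subtlety, already noted, is that $\kappa A_2$ need not lie in $\mathcal H(\Omega)$, so the arguments must be run with $u_2=u^{A_2}_f$ (respectively $\kappa^{-1}u_2$) used directly as a solution rather than relying on well-posedness for $A_2$ (respectively $\kappa A_2$).
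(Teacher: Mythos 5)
Your proof is correct, and it gives the stated inequalities \eqref{eq:mono1}--\eqref{eq:mono2}. The route differs from the paper's in two respects. First, the paper attacks the general $\kappa$ case head-on by expanding the nonnegative quantity
\begin{equation*}
\int_\Omega \bigl\lvert [(\kappa A_1)\upR]^{1/2}\nabla u_1 - [(\kappa A_1)\upR]^{-1/2}(\kappa A_\ell)^*\nabla u_2\bigr\rvert^2\,\di x \geq 0
\end{equation*}
(with $\ell=1$ for \eqref{eq:mono1} and with $(\kappa A_2)$, not $(\kappa A_2)^*$, for \eqref{eq:mono2}), using \eqref{eq:lambdaweak} and \eqref{eq:lambdaR} together with the algebraic identities for $(\kappa A_1)[(\kappa A_1)\upR]^{-1}(\kappa A_1)^*$ and $(\kappa A_2)^*[(\kappa A_1)\upR]^{-1}(\kappa A_2)$; you instead prove the $\kappa=1$ version and then scale via $u_f^{\mu A}=\mu^{-1}u_f^A$, $\Lambda(\mu A)=\mu^{-1}\Lambda(A)$, and the correctly verified identity $|\kappa|^2[\kappa^{-1}T]\upR=[\overline\kappa T]\upR$. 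Your scaling reduction is sound; the remark that $\kappa A_2$ need not lie in $\mathcal H(\Omega)$ is a real but small subtlety and you handle it correctly by using only the existence of $u_2$. Second, for the upper bound you replace the paper's completion-of-squares by Cauchy--Schwarz applied to $(A_1\upR)^{1/2}\nabla u_1$ and $(A_1\upR)^{-1/2}A_2\nabla u_2$; this buys nothing over the paper's argument and has the minor blemish of requiring a division by $\inner{f,\LambdaR_1 f}^{1/2}$, which forces a separate (trivial) treatment of the degenerate case $f=0$ that you did not spell out. For the lower bound your pointwise square $A_1\upR\bigl(\nabla w+\I(A_1\upR)^{-1}A_1\uptI\nabla u_2\bigr)\cdot\overline{(\,\cdot\,)}$ is literally $\bigl\lvert(A_1\upR)^{1/2}\nabla u_1-(A_1\upR)^{-1/2}A_1^*\nabla u_2\bigr\rvert^2$, so this part is the paper's argument repackaged through the difference $w=u_1-u_2$ rather than through $u_1$ and $u_2$ directly. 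In short: a correct proof, organizationally cleaner in reducing to $\kappa=1$, essentially the paper's square for the lower bound, and a mildly different (Cauchy--Schwarz) device for the upper bound.
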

	\begin{proof}
		We start by proving \eqref{eq:mono1}. A computation reveals that
		\begin{equation*}
			(\kappa A_1)[(\kappa A_1)\upR]^{-1}(\kappa A_1)^* = (\kappa A_1)\upR + \mathcal{B}_\kappa,
		\end{equation*}
		which is used below, combined with \eqref{eq:lambdaweak} and \eqref{eq:lambdaR}:
		\begin{align*}
			0 &\leq \int_\Omega \bigl\lvert[(\kappa A_1)\upR]^{1/2}\nabla u_1 - [(\kappa A_1)\upR]^{-1/2}(\kappa A_1)^*\nabla u_2\bigr\rvert^2 \,\di x \\
			&= \redel\Bigl( \kappa\int_\Omega A_1\nabla u_1\cdot\overline{\nabla u_1}\,\di x \Bigr) -2\redel\Bigl( \kappa\int_\Omega A_1\nabla u_1\cdot\overline{\nabla u_2}\,\di x \Bigr) \\
			&\phantom{={}}  + \int_\Omega \bigl[ (\kappa A_1)\upR + \mathcal{B}_\kappa \bigr]\nabla u_2\cdot\overline{\nabla u_2}\,\di x \\
			&= \inner{f,[\overline{\kappa}(\Lambda_1-\Lambda_2)]\upR f} +\int_\Omega \bigl[ (\kappa A_1)\upR - (\kappa A_2)\upR + \mathcal{B}_\kappa \bigr]\nabla u_2\cdot\overline{\nabla u_2}\,\di x.
		\end{align*}
		
		Next we prove \eqref{eq:mono2}. A computation reveals that
		\begin{align*}
			(\kappa A_2)^*[(\kappa A_1)\upR]^{-1}(\kappa A_2) &= (\kappa A_2)\upR [(\kappa A_1)\upR]^{-1}(\kappa A_2)\upR + \mathcal{C}_\kappa \\
			&=   (\kappa A_2)\upR + (\kappa A_2)\upR[(\kappa A_1)\upR]^{-1}[\kappa(A_2-A_1)]\upR + \mathcal{C}_\kappa,
		\end{align*}
		which is used below, combined with \eqref{eq:lambdaweak} and \eqref{eq:lambdaR}:
		\begin{align*}
			0 &\leq \int_\Omega \bigl\lvert[(\kappa A_1)\upR]^{1/2}\nabla u_1 - [(\kappa A_1)\upR]^{-1/2}(\kappa A_2)\nabla u_2\bigr\rvert^2 \,\di x \\
			&= \redel\Bigl( \kappa\int_\Omega A_1\nabla u_1\cdot\overline{\nabla u_1}\,\di x \Bigr) -2\redel\Bigl( \kappa\int_\Omega A_2\nabla u_2\cdot\overline{\nabla u_1}\,\di x \Bigr) \\
			&\phantom{={}}  + \int_\Omega \bigl[ (\kappa A_2)\upR + (\kappa A_2)\upR[(\kappa A_1)\upR]^{-1}[\kappa(A_2-A_1)]\upR + \mathcal{C}_\kappa \bigr]\nabla u_2\cdot\overline{\nabla u_2}\,\di x \\
			&= \inner{f,[\overline{\kappa}(\Lambda_2-\Lambda_1)]\upR f} +\int_\Omega \bigl[ (\kappa A_2)\upR[(\kappa A_1)\upR]^{-1}[\kappa(A_2-A_1)]\upR + \mathcal{C}_\kappa \bigr]\nabla u_2\cdot\overline{\nabla u_2}\,\di x. \qedhere
		\end{align*}
	\end{proof}
	
	\begin{remark}
		Note that in the isotropic case, the large square bracket in $\mathcal{C}_\kappa$ vanishes (after the imaginary unit). Also, if $\kappa A_1\in\Hsa(\Omega)$ then $\mathcal{B}_\kappa = 0$ and if $\kappa A_2\in\Hsa(\Omega)$ then $\mathcal{C}_\kappa = 0$.
	\end{remark}
	
	We get some corollaries of Theorem~\ref{thm:generalmono}, based on the choice of $\kappa$.
	
	\begin{corollary}
		In the setting of Theorem~\ref{thm:generalmono} with $\kappa = 1$, we have
		\begin{align*}
			\inner{f,(\LambdaR_1-\LambdaR_2) f} &\geq \int_\Omega \bigl[\AR_2-\AR_1 - \mathcal{B}_1 \bigr]\nabla u_2\cdot \overline{\nabla u_2}\,\di x,  \\
			\inner{f,(\LambdaR_1-\LambdaR_2) f} &\leq \int_\Omega \bigl[\AR_2 (\AR_1)^{-1}(\AR_2-\AR_1) + \mathcal{C}_1\bigr]\nabla u_2\cdot\overline{\nabla u_2}\,\di x. 
		\end{align*}
	\end{corollary}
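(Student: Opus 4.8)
This is the special case $\kappa = 1$ of Theorem~\ref{thm:generalmono}. First note that the hypothesis ``$\kappa A_1 \in \mathcal{H}(\Omega)$'' is automatically satisfied here, since $\kappa A_1 = A_1 \in \mathcal{H}(\Omega)$ by assumption. The plan is simply to substitute $\kappa = 1$ throughout the statement of Theorem~\ref{thm:generalmono} and identify each term with its counterpart in the corollary.

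Concretely, for $j\in\{1,2\}$ we have $(\kappa A_j)\upR = \AR_j$ and $(\kappa A_j)\upI = \AI_j$ when $\kappa = 1$, so $\mathcal{B}_\kappa$ specializes to $\mathcal{B}_1$ and $\mathcal{C}_\kappa$ specializes to $\mathcal{C}_1$ as defined in the corollary's setting. On the left-hand side, since $\overline{\kappa} = 1$ and taking the self-adjoint part is linear with respect to subtraction of operators, we have $[\overline{\kappa}(\Lambda_1-\Lambda_2)]\upR = (\Lambda_1-\Lambda_2)\upR = \LambdaR_1 - \LambdaR_2$. Similarly $[\kappa(A_2-A_1)]\upR = (A_2-A_1)\upR = \AR_2 - \AR_1$. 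With these identifications, inequality \eqref{eq:mono1} becomes the first claimed inequality, and inequality \eqref{eq:mono2} becomes the second.

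There is no genuine obstacle here: the only points requiring (trivial) verification are that $A_1 \in \mathcal{H}(\Omega)$ indeed meets the admissibility condition on $\kappa A_1$, and that the maps $T \mapsto T\upR$ and $T \mapsto T\upI$ are additive, so that they distribute over the differences $\Lambda_1 - \Lambda_2$ and $A_2 - A_1$. Both are immediate from the definitions in Section~\ref{sec:notation} and the linearity of the formulas $\AR = \tfrac12(A+A^*)$ and $\AI = \tfrac1{2\I}(A-A^*)$.
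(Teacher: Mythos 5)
Your proof is correct and matches what the paper does: the paper states this corollary without a separate proof precisely because it is obtained by substituting $\kappa = 1$ into Theorem~\ref{thm:generalmono}, and your verification of the admissibility condition and the (real-)linearity of $T\mapsto T\upR$ supplies exactly the bookkeeping that justifies this substitution.
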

	
	\begin{corollary}
		In the setting of Theorem~\ref{thm:generalmono} with $\kappa = -\I$, provided that $\AI_1 \in\Hsa(\Omega)$, we have
		\begin{align*}
			\inner{f,(\LambdaI_2-\LambdaI_1) f} &\geq \int_\Omega \bigl[\AI_2-\AI_1 - \mathcal{B}_{-\I} \bigr]\nabla u_2\cdot \overline{\nabla u_2}\,\di x,  \\
			\inner{f,(\LambdaI_2-\LambdaI_1) f} &\leq \int_\Omega \bigl[\AI_2 (\AI_1)^{-1}(\AI_2-\AI_1) + \mathcal{C}_{-\I}\bigr]\nabla u_2\cdot\overline{\nabla u_2}\,\di x. 
		\end{align*}
	\end{corollary}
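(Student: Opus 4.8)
The plan is to invoke Theorem~\ref{thm:generalmono} directly with the choice $\kappa = -\I$ and then translate every quantity appearing there into the stated form, the only nontrivial point being the verification that the abstract hypothesis ``$\kappa A_1\in\mathcal{H}(\Omega)$'' is exactly the assumption $\AI_1\in\Hsa(\Omega)$.

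First I would record the Hermitian decomposition of $-\I A_j$. Since $(-\I A_j)^* = \I A_j^*$, one computes
\[
	(-\I A_j)\upR = \tfrac12\bigl(-\I A_j + \I A_j^*\bigr) = -\tfrac{\I}{2}\bigl(A_j - A_j^*\bigr) = \AI_j,
	\qquad
	(-\I A_j)\upI = \tfrac{1}{2\I}\bigl(-\I A_j - \I A_j^*\bigr) = -\AR_j .
\]
In particular $(-\I A_1)\upR = \AI_1$, so the requirement $\kappa A_1 = -\I A_1\in\mathcal{H}(\Omega)$ is precisely that $\AI_1\geq cI$ in the Loewner order for some $c>0$; as $\AI_1$ is self-adjoint this is equivalent to $\AI_1\in\Hsa(\Omega)$, which is the hypothesis of the corollary. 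With these identities, $[(\kappa A_1)\upR]^{-1} = (\AI_1)^{-1}$ is well defined, and the definitions of $\mathcal{B}_\kappa,\mathcal{C}_\kappa$ in Theorem~\ref{thm:generalmono} evaluated at $\kappa=-\I$ become exactly $\mathcal{B}_{-\I}$ and $\mathcal{C}_{-\I}$ as displayed (with $(-\I A_2)\upR = \AI_2$, $(-\I A_2)\upI = -\AR_2$). Similarly, $[\kappa(A_2-A_1)]\upR = \AI_2 - \AI_1$, and $(\kappa A_2)\upR[(\kappa A_1)\upR]^{-1}[\kappa(A_2-A_1)]\upR = \AI_2(\AI_1)^{-1}(\AI_2-\AI_1)$, which are the integrands on the right-hand sides of the two claimed inequalities.

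For the left-hand side, note $\overline{\kappa} = \I$, and apply the scalar-multiplication rule $(\kappa\Lambda)\upR = \kappa\upR\LambdaR - \kappa\upI\LambdaI$ to the operator $\Lambda_1-\Lambda_2$ with scalar $\I$ (so $\kappa\upR = 0$, $\kappa\upI = 1$); this gives
\[
	\inner{f,[\overline{\kappa}(\Lambda_1-\Lambda_2)]\upR f} = \inner{f,[\I(\Lambda_1-\Lambda_2)]\upR f} = -\inner{f,(\LambdaI_1-\LambdaI_2)f} = \inner{f,(\LambdaI_2-\LambdaI_1)f}.
\]
Substituting all of the above into \eqref{eq:mono1} and \eqref{eq:mono2} yields, respectively, the lower and upper bounds in the statement. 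The only step requiring any care is the bookkeeping of the factors of $\I$ and $\overline{\kappa}$ in the Hermitian decompositions above; there is no genuine analytic obstacle, since all the work has already been done in Theorem~\ref{thm:generalmono}.
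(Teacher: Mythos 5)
Your proposal is correct and is exactly the substitution the paper intends; since the paper states this as a corollary of Theorem~\ref{thm:generalmono} without an explicit proof, the computation of $(-\I A_j)\upR = \AI_j$, $(-\I A_j)\upI = -\AR_j$, and $[\I(\Lambda_1-\Lambda_2)]\upR = \LambdaI_2-\LambdaI_1$ together with the observation that $-\I A_1\in\mathcal{H}(\Omega)$ is equivalent to $\AI_1\in\Hsa(\Omega)$ is precisely the bookkeeping needed, and you have carried it out correctly.
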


	What we will need for the monotonicity method is the following, corresponding to the special case of Theorem~\ref{thm:generalmono} where $A_1,A_2\in\Hsa(\Omega)$ and $\kappa = 1$. We also provide a slightly different proof that is arguably easier to read. 

	\begin{proposition} \label{prop:mono}
		Let $A_1,A_2\in\Hsa(\Omega)$ and $f\in L^2_\diamond(\Gamma)$. For $j\in\{1,2\}$ we denote $\Lambda_j = \Lambda(A_j)$ and $u_j = u_f^{A_j}$. Then
		\begin{align}
			\inner{f,(\Lambda_1-\Lambda_2)f} &\geq \int_\Omega (A_2-A_1)\nabla u_2\cdot \overline{\nabla u_2}\,\di x, \label{eq:monosimple1} \\
			\inner{f,(\Lambda_1-\Lambda_2)f} &\leq \int_\Omega A_2 A_1^{-1}(A_2-A_1)\nabla u_2\cdot\overline{\nabla u_2}\,\di x. \label{eq:monosimple2}
		\end{align}
	\end{proposition}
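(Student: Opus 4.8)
The plan is to give a self-contained argument relying only on the weak formulation~\eqref{eq:weak} and on the pointwise Hermitian symmetry of $A_1,A_2$, via two ``completion of squares'' inequalities; no appeal to Theorem~\ref{thm:generalmono} is needed. First I would record the elementary identities obtained by testing~\eqref{eq:weak}: testing the equation for $A_1$ against $v\in\{u_1,u_2\}$ gives $\int_\Omega A_1\nabla u_1\cdot\overline{\nabla v}\,\di x=\inner{f,v}$, and testing the equation for $A_2$ against $v\in\{u_1,u_2\}$ gives $\int_\Omega A_2\nabla u_2\cdot\overline{\nabla v}\,\di x=\inner{f,v}$. In particular $\inner{f,\Lambda_j f}=\int_\Omega A_j\nabla u_j\cdot\overline{\nabla u_j}\,\di x$, and this number is real since $A_j\in\Hsa(\Omega)$ makes $\Lambda_j$ self-adjoint. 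I would also use throughout that for a pointwise Hermitian matrix $M$ and $\xi,\eta\in\C^d$ one has $M\xi\cdot\overline{\eta}=\xi\cdot\overline{M\eta}$ and $\overline{\xi\cdot\overline{\eta}}=\eta\cdot\overline{\xi}$, so the Hermitian factors $A_1$, $A_2$, $A_1^{-1}$ may be freely moved across the bilinear dot against a conjugated argument.

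For the lower bound~\eqref{eq:monosimple1} I would expand the nonnegative quantity $\int_\Omega A_1\nabla(u_1-u_2)\cdot\overline{\nabla(u_1-u_2)}\,\di x\ge 0$, which is nonnegative because $A_1\ge cI$ pointwise. The three terms other than the $\nabla u_2$ self-term reduce, via the identities above, to $\inner{f,\Lambda_1 f}-2\inner{f,\Lambda_2 f}$, while the $\nabla u_2$ self-term is $\int_\Omega A_1\nabla u_2\cdot\overline{\nabla u_2}\,\di x$; combined with $\inner{f,\Lambda_2 f}=\int_\Omega A_2\nabla u_2\cdot\overline{\nabla u_2}\,\di x$ this rearranges exactly to $\inner{f,(\Lambda_1-\Lambda_2)f}\ge\int_\Omega(A_2-A_1)\nabla u_2\cdot\overline{\nabla u_2}\,\di x$.

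For the upper bound~\eqref{eq:monosimple2} the idea is to complete the square with the \emph{fluxes} rather than the potentials. Since $A_1\ge cI$ pointwise, $A_1^{-1}$ is pointwise Hermitian positive definite, so $\int_\Omega A_1^{-1}(A_1\nabla u_1-A_2\nabla u_2)\cdot\overline{(A_1\nabla u_1-A_2\nabla u_2)}\,\di x\ge 0$. Expanding, using $A_1^{-1}A_1=I$ together with the shuffling rules, the $u_1$ self-term and both mixed terms collapse to multiples of $\inner{f,\Lambda_1 f}$ with net contribution $-\inner{f,\Lambda_1 f}$, while the remaining term equals $\int_\Omega A_2 A_1^{-1}A_2\nabla u_2\cdot\overline{\nabla u_2}\,\di x$; this yields $\inner{f,\Lambda_1 f}\le\int_\Omega A_2 A_1^{-1}A_2\nabla u_2\cdot\overline{\nabla u_2}\,\di x$. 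Subtracting $\inner{f,\Lambda_2 f}=\int_\Omega A_2\nabla u_2\cdot\overline{\nabla u_2}\,\di x$ and using the algebraic identity $A_2 A_1^{-1}A_2-A_2=A_2 A_1^{-1}(A_2-A_1)$ gives~\eqref{eq:monosimple2}.

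The only delicate point --- the main obstacle, such as it is --- is the bookkeeping of complex conjugates when transporting $A_1$, $A_2$, $A_1^{-1}$ across the conjugated slot, together with the observation that every scalar produced along the way (the tested integrals and $\inner{f,\Lambda_j f}$) is real, so that, e.g., $\int_\Omega A_1\nabla u_2\cdot\overline{\nabla u_1}\,\di x$ may be identified with $\overline{\inner{f,u_2}}=\inner{f,\Lambda_2 f}$ without a spurious conjugate. No unique continuation, no ellipticity beyond pointwise positivity, and no regularity beyond $L^\infty$ enters, so both inequalities hold exactly under the stated hypotheses.
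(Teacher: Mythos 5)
Your proposal is correct and takes essentially the same approach as the paper: the lower bound is obtained by adding the nonnegative term $\int_\Omega A_1\nabla(u_1-u_2)\cdot\overline{\nabla(u_1-u_2)}\,\di x$, and the upper bound by subtracting the nonnegative term $\int_\Omega A_1^{-1}(A_1\nabla u_1 - A_2\nabla u_2)\cdot\overline{(A_1\nabla u_1 - A_2\nabla u_2)}\,\di x$, which the paper writes equivalently as $\int_\Omega\abs{A_1^{1/2}\nabla u_1 - A_1^{-1/2}A_2\nabla u_2}^2\,\di x$. The remaining bookkeeping with~\eqref{eq:lambdaweak}, the Hermitian shuffling, and the identity $A_2A_1^{-1}A_2 - A_2 = A_2A_1^{-1}(A_2-A_1)$ matches the paper line by line.
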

	\begin{proof}
		We start by proving \eqref{eq:monosimple1}, which is by the computation below combined with \eqref{eq:lambdaweak},
		\begin{align*}
			&\int_\Omega (A_2 - A_1)\nabla u_2\cdot\overline{\nabla u_2}\,\di x \\
			&\hspace{1.5cm}\leq \int_\Omega (A_2 - A_1)\nabla u_2\cdot\overline{\nabla u_2}\,\di x + \int_\Omega \abs{A_1^{1/2}\nabla(u_1-u_2)}^2\,\di x \\
			&\hspace{1.5cm}= \int_\Omega A_2\nabla u_2\cdot\overline{\nabla u_2}\,\di x + \int_\Omega A_1\nabla u_1\cdot\overline{\nabla u_1}\,\di x - 2\redel\Bigl( \int_\Omega A_1\nabla u_1\cdot\overline{\nabla u_2}\,\di x \Bigr) \\
			&\hspace{1.5cm}=\inner{f,(\Lambda_1-\Lambda_2)f}.
		\end{align*}
		
		Next we prove \eqref{eq:monosimple2}. Using that 
		\begin{equation*}
			A_2A_1^{-1}(A_2-A_1) = A_2A_1^{-1}A_2 - A_2
		\end{equation*}
		is self-adjoint, we get
		\begin{align*}
			&\int_\Omega A_2 A_1^{-1}(A_2-A_1)\nabla u_2\cdot\overline{\nabla u_2}\,\di x \\
			&\hspace{1.5cm}\geq \int_\Omega A_2 A_1^{-1}(A_2-A_1)\nabla u_2\cdot\overline{\nabla u_2}\,\di x - \int_\Omega \abs{A_1^{1/2}\nabla u_1 - A_1^{-1/2}A_2\nabla u_2}^2\,\di x \\
			&\hspace{1.5cm}= - \int_\Omega A_2\nabla u_2\cdot\overline{\nabla u_2}\,\di x -\int_\Omega A_1\nabla u_1\cdot\overline{\nabla u_1}\,\di x + 2\redel\Bigl(\int_\Omega A_2\nabla u_2\cdot\overline{\nabla u_1}\,\di x\Bigr)  \\
			&\hspace{1.5cm}= \inner{f,(\Lambda_1 - \Lambda_2)f}. \qedhere
		\end{align*}
	\end{proof}
	
	Note that the first inequality in Proposition~\ref{prop:mono} implies monotonicity of the ND maps, since
	\begin{equation} \label{eq:simplemono}
		\int_\Omega (A_2-A_1)\nabla u_2\cdot \overline{\nabla u_2}\,\di x \leq \inner{f,(\Lambda_1-\Lambda_2)f} \leq \int_\Omega (A_2-A_1)\nabla u_1\cdot \overline{\nabla u_1}\,\di x.
	\end{equation}
	It is not immediately clear how one would make use of the second inequality in Proposition~\ref{prop:mono}, due to the matrix product in the front. What we need to consider is when $A_2-A_1$ is positive/negative (semi)definite, if we can also infer that $A_2A_1^{-1}(A_2-A_1)$ has the same properties. This is the result of the following proposition.
	\begin{proposition} \label{prop:bnd}
		Let $c\geq 0$ and $V \subseteq \Omega$. Let $A_1,A_2\in\Hsa(\Omega)$ with $A_1 \leq \beta I$ and $A_2 \geq \alpha I$  in $V$ for $\alpha,\beta>0$. 
		\begin{enumerate}[\rm(i)]
			\item If $A_2-A_1 \geq cI$ in $V$, then $A_2A_1^{-1}(A_2-A_1) \geq cI$ in $V$.
			\item If $A_2-A_1 \leq -cI$ in $V$, then $A_2A_1^{-1}(A_2-A_1) \leq -c(\tfrac{\alpha}{\beta})^2I$ in $V$.
		\end{enumerate}
	\end{proposition}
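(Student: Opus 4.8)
The plan is to reduce the matrix claim to a pointwise statement and exploit that the relevant products, although not obviously self-adjoint as written, can be symmetrized via a similarity transformation. Fix $x\in V$ and write $A_1 = A_1(x)$, $A_2 = A_2(x)$, dropping $x$; these are Hermitian positive definite $d\times d$ matrices with $A_1 \le \beta I$ and $A_2 \ge \alpha I$. The key observation is that
\[
A_1^{-1/2}\bigl(A_2 A_1^{-1}(A_2-A_1)\bigr)A_1^{1/2} = A_1^{-1/2}A_2 A_1^{-1}A_2 A_1^{1/2} - A_1^{-1/2}A_2 A_1^{1/2} = M^2 - M,
\]
where $M = A_1^{-1/2}A_2 A_1^{-1/2}$ is Hermitian and positive definite. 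Hence $A_2A_1^{-1}(A_2-A_1)$ is similar (via $A_1^{1/2}$) to the Hermitian matrix $M^2 - M$, and in particular it has real eigenvalues; moreover $A_2A_1^{-1}(A_2-A_1) \ge cI$ in the Loewner order is equivalent, after conjugating by the self-adjoint $A_1^{1/2}$, to $M^2 - M \ge c\, A_1^{-1}$, which since $A_1 \le \beta I$ is implied by $M^2 - M \ge (c/\beta) A_1 \cdot (\text{something})$ — more carefully, I will use the cleaner route below rather than chasing $A_1$ bounds through the conjugation.

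The cleaner route: the hypothesis $A_2 - A_1 \ge cI$ is equivalent, conjugating by $A_1^{-1/2}$, to $M \ge I + c A_1^{-1} \ge I$ (using $A_1^{-1}\ge \beta^{-1}I > 0$, so in fact $M \ge I + (c/\beta)I$, but $M \ge I$ alone would suffice for a weaker bound — I will track constants to get exactly $cI$). For part (i): from $M - I \ge cA_1^{-1}$ and $M \ge I$ we get $M^2 - M = M(M-I) \ge M \cdot cA_1^{-1} \ge cA_1^{-1}$ since $M\ge I$ and $cA_1^{-1}\ge 0$ — wait, $M$ and $A_1^{-1}$ need not commute, so $M(M-I)\ge cMA_1^{-1}$ is not automatic; instead I will use that for Hermitian $P,Q\ge 0$ one has $P^{1/2}QP^{1/2}\ge 0$, writing $M(M-I) = M^{1/2}\bigl(M^{1/2}(M-I)M^{1/2}\bigr)M^{-1/2}$ is again non-symmetric. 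The correct symmetric manipulation is: $M^2 - M = M^{1/2}(M - I)M^{1/2} \cdot$ — no. Let me instead factor as $M^2-M = (M-\tfrac12 I)^2 - \tfrac14 I$, or better, bound $M^2 - M$ from below using the scalar inequality $t^2 - t \ge c\,t_{\min}$-type reasoning via the spectral theorem for $M$: since $M\ge I + cA_1^{-1}$ and everything is about a single matrix $M$ only after we discard the $A_1$-dependence... The genuinely clean statement is: conjugating $A_2A_1^{-1}(A_2-A_1)\ge cI$ by $A_2^{-1/2}$ gives the equivalent $N - N^2 \le$ ... — I will settle on conjugating the \emph{conclusion} by $A_1^{1/2}$ to get $M^2 - M \ge cA_1^{-1}$ as the target, then prove it from $M - I \ge cA_1^{-1}$ by writing $M^2 - M = (M-I) + (M-I)^2 + (M-I)$, i.e. $M^2 - M = (M-I)M \ge 0\cdot\ldots$; concretely $M^2-M = (M-I)^{1/2}\bigl[(M-I)^{1/2}M(M-I)^{1/2}\bigr](M-I)^{-1/2}$ fails symmetry again, so the \textbf{honest main step} is: use the spectral decomposition of the Hermitian $M$, but since the target $cA_1^{-1}$ involves another matrix, instead prove the operator inequality $M^2 - M \ge M - I$ (valid since $M\ge I$ implies $(M-I)^2\ge 0$ hence $M^2 - 2M + I \ge 0$ hence $M^2 - M \ge M - I$), and then chain with $M - I \ge cA_1^{-1}$ to conclude $M^2-M \ge cA_1^{-1}$, which conjugated back by $A_1^{1/2}$ is exactly $A_2A_1^{-1}(A_2-A_1)\ge cI$. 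For part (ii) the hypothesis $A_2-A_1\le -cI$ conjugated by $A_1^{-1/2}$ reads $M \le I - cA_1^{-1} \le I - (c/\beta)I$; one must show $M^2 - M \le -c(\alpha/\beta)^2 A_1^{-1}$, equivalently (since it suffices and is cleaner) bound $M(I-M)\ge c(\alpha/\beta)^2 A_1^{-1}$; here I use $I - M \ge cA_1^{-1}\ge (c/\beta)I$ and the lower bound $M\ge A_1^{-1/2}(\alpha I)A_1^{-1/2}=\alpha A_1^{-1}\ge (\alpha/\beta)I$, so $M(I-M) = M^{1/2}(I-M)M^{1/2}$ — which \emph{is} symmetric and $\ge 0$ — and then $M^{1/2}(I-M)M^{1/2} \ge M^{1/2}(cA_1^{-1})M^{1/2} \ge$ (using $M^{1/2}A_1^{-1}M^{1/2}\ge$? non-commuting again). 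The robust finish is the sandwich $M(I-M) = M - M^2$, and since $\alpha A_1^{-1}\le M \le I - cA_1^{-1}$, apply the scalar map $\phi(M)=M-M^2$ via the spectral theorem \emph{only to $M$}, concluding $M - M^2 \ge \phi_{\min}$ where on the relevant spectral interval $\phi(t)=t(1-t)\ge (\text{value at endpoints})$; pairing the smallest eigenvalue $\lambda\ge \alpha/\beta$ of $M$ with $1-\lambda \ge cA_1^{-1}$ componentwise is what yields the $c(\alpha/\beta)^2$ factor after reconjugation.

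\textbf{Summary of the order of steps.} (1) Reduce to a pointwise inequality at a.e.\ $x\in V$. (2) Set $M = A_1^{-1/2}A_2A_1^{-1/2}$, note it is Hermitian positive definite, and record that $A_2A_1^{-1}(A_2-A_1)$ is conjugate by $A_1^{1/2}$ to $M^2-M$, so the Loewner claims transfer to $M^2-M$ versus $cA_1^{-1}$ (resp.\ $-c(\alpha/\beta)^2A_1^{-1}$). (3) Translate the hypotheses: $A_2-A_1\ge cI \iff M \ge I + cA_1^{-1}$ (case i), and $A_2-A_1\le -cI\iff M\le I - cA_1^{-1}$ (case ii), also recording $M \ge \alpha A_1^{-1}$ from $A_2\ge\alpha I$. (4) Case (i): from $M\ge I$ deduce $M^2-M\ge M-I\ge cA_1^{-1}$, conjugate back. (5) Case (ii): use $M - M^2 = M^{1/2}(I-M)M^{1/2}$ together with $I - M\ge cA_1^{-1}$ and $M\ge \alpha A_1^{-1}\ge (\alpha/\beta)I$ and $A_1^{-1}\ge \beta^{-1}I$ to get $M-M^2 \ge c(\alpha/\beta)^2 A_1^{-1}$, conjugate back.

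\textbf{Main obstacle.} The subtlety is purely that the matrices $M$ and $A_1^{-1}$ need not commute, so one cannot naively multiply operator inequalities; the fix is to keep every intermediate inequality in a manifestly symmetric ``sandwich'' form $P^{1/2}QP^{1/2}\ge 0$ and only use scalar bounds $A_1^{-1}\ge\beta^{-1}I$, $M\ge(\alpha/\beta)I$ at the very end, which is where the constant $(\alpha/\beta)^2$ in part (ii) and the sharp constant $c$ in part (i) come from. Care must also be taken that $A_2A_1^{-1}(A_2-A_1)$, though not symmetric, has real spectrum and a well-defined Loewner comparison — this is exactly what the conjugation by $A_1^{1/2}$ legitimizes.
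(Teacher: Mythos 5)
Your route is genuinely different from the paper's. The paper proves (i) from the direct identity $A_2A_1^{-1}(A_2-A_1)=(A_2-A_1)A_1^{-1}(A_2-A_1)+(A_2-A_1)$, whose first term is a manifestly nonnegative sandwich; for (ii) it first establishes $A_2^{-1}-A_1^{-1}\geq c\beta^{-2}I$ by a similar decomposition and then tests against $A_2\xi$. Your reduction to the Hermitian $M=A_1^{-1/2}A_2A_1^{-1/2}$ is a legitimate alternative, and case (i) does close: $M\geq I+cA_1^{-1}\geq I$, so $(M-I)^2\geq 0$, hence $M^2-M\geq M-I\geq cA_1^{-1}$, and undoing the congruence gives $A_2A_1^{-1}(A_2-A_1)\geq cI$.

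There are, however, two problems. The opening manipulation is false as written: $A_1^{-1/2}\bigl(A_2A_1^{-1}(A_2-A_1)\bigr)A_1^{1/2}$ equals $(M^2-M)A_1$, not $M^2-M$. What you actually want is the congruence identity $A_2A_1^{-1}(A_2-A_1)=A_1^{1/2}(M^2-M)A_1^{1/2}$, which is what lets the Loewner claims transfer (your summary uses this implicitly); note also that $A_2A_1^{-1}(A_2-A_1)=A_2A_1^{-1}A_2-A_2$ is already Hermitian, so no similarity argument is needed to legitimize the Loewner comparison. The more serious gap is in case (ii): you correctly reach $M-M^2=M^{1/2}(I-M)M^{1/2}\geq cM^{1/2}A_1^{-1}M^{1/2}$, then declare yourself blocked by non-commutativity and retreat to a ``spectral theorem for $M$ alone'' finish that is not actually executed (``pairing the smallest eigenvalue of $M$ with $1-\lambda\geq cA_1^{-1}$ componentwise'' is not a meaningful operation on non-commuting operators, and a purely scalar lower bound on $M-M^2$ does not by itself produce the $A_1^{-1}$ on the right-hand side that you need before conjugating back). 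But the step you abandoned goes through directly with the tools you already listed: congruence of $A_1^{-1}\geq\beta^{-1}I$ by $M^{1/2}$ gives $M^{1/2}A_1^{-1}M^{1/2}\geq\beta^{-1}M$, and then $M\geq\alpha A_1^{-1}$ yields $M-M^2\geq c\beta^{-1}M\geq c(\alpha/\beta)A_1^{-1}\geq c(\alpha/\beta)^2A_1^{-1}$; conjugating back finishes the proof, in fact with the slightly sharper constant $-c(\alpha/\beta)I$ in place of $-c(\alpha/\beta)^2I$.
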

	\begin{proof}
		Let $A_2 - A_1 \geq cI$ in the Loewner order in $V$ for some $c\geq 0$. Let $\xi\in\C^d$ and note that $A_1^{-1}$ is positive definite. Now we put the pieces together:
		\begin{align*}
			A_2A_1^{-1}(A_2-A_1)\xi\cdot\overline{\xi} &= (A_2-A_1)A_1^{-1}(A_2-A_1)\xi\cdot\overline{\xi} + (A_2-A_1)\xi\cdot\overline{\xi} \\
			&= A_1^{-1}(A_2-A_1)\xi\cdot\overline{(A_2-A_1)\xi} + (A_2-A_1)\xi\cdot\overline{\xi} \\
			&\geq c\abs{\xi}^2,
		\end{align*}
		which concludes the proof of (i).
		
		For (ii), we first prove the assertion that if
		\begin{equation} \label{eq:negdefproof}
			A_1 - A_2 \geq cI
		\end{equation}
		for some $c\geq 0$, then
		\begin{equation} \label{eq:inverseposdef}
			A_2^{-1}-A_1^{-1} \geq c\beta^{-2}I.
		\end{equation}
		The semidefinite case ($c=0$) of \eqref{eq:inverseposdef} is a simple consequence of $t\mapsto -t^{-1}$ being an operator monotone function, c.f.~\cite[chapter~7]{Donoghue1974}. To prove \eqref{eq:inverseposdef}, first note that we have
		\begin{align*}
			A_2^{-1} - A_1^{-1} &= A_1^{-1}(A_1-A_2)A_2^{-1} \\
			&=A_1^{-1}(A_1-A_2)A_1^{-1} + A_1^{-1}(A_1-A_2)(A_2^{-1}-A_1^{-1}) \\
			&= A_1^{-1}\bigl[ A_1-A_2 + (A_1-A_2)A_2^{-1}(A_1-A_2) \bigr]A_1^{-1}.
		\end{align*}
		Note that $A_1^{-1} \geq \beta^{-1}I$ and likewise that $A_2^{-1}$ is positive definite. Hence, we have
		\begin{align*}
			(A_2^{-1} - A_1^{-1})\xi\cdot\overline{\xi} &= (A_1-A_2)A_1^{-1}\xi\cdot\overline{A_1^{-1}\xi} + A_2^{-1}(A_1-A_2)A_1^{-1}\xi\cdot\overline{(A_1-A_2)A_1^{-1}\xi} \\
			&\geq c\abs{A_1^{-1}\xi}^2 \geq c\beta^{-2}\abs{\xi}^2. 
		\end{align*}
			
		Now assume that in $V$ we have \eqref{eq:negdefproof} for some $c\geq 0$. Then \eqref{eq:inverseposdef} implies
		\begin{equation*}
			A_2A_1^{-1}(A_1-A_2)\xi\cdot\overline{\xi} = (A_2^{-1}-A_1^{-1})A_2\xi\cdot\overline{A_2\xi} \geq c\beta^{-2}\abs{A_2\xi}^2 \geq c(\tfrac{\alpha}{\beta})^2\abs{\xi}^2,
		\end{equation*}
		which concludes the proof of (ii).
	\end{proof}
	
	\section{Localized potentials for anisotropic coefficients} \label{sec:locpot}
	
	We extend localization results from \cite{Gebauer2008b} to anisotropic coefficients in $\mathcal{H}(\Omega)$. For a measurable set $V\subseteq \overline{\Omega}$ and $F\in L^2(V)^d$, we define $w = w_F^A$ as the unique solution in $H^1_\diamond(\Omega)$ to the variational problem
	\begin{equation} \label{eq:vmo_weak}
		\int_\Omega A^* \nabla w \cdot \overline{\nabla v}\,\di x = \int_V F\cdot\overline{\nabla v}\,\di x
	\end{equation}
	for all $v\in H^1_\diamond(\Omega).$ We define the \emph{virtual measurement operator} $L_V(A) \colon L^2(V)^d \to L^2_\diamond(\Gamma)$ as 
	\begin{equation*}
		L_V(A)F = w_F^A|_{\Gamma}.
	\end{equation*}
	In the following, $R$ is used to denote the range of an operator.
	\begin{proposition} \label{prop:vmo}
		For $A\in \mathcal{H}(\Omega)$ and measurable sets $V, V_1, V_2 \subseteq \overline{\Omega}$, there are the following properties of the virtual measurement operator:
		\begin{enumerate}[\rm(i)]
			\item $R(L_V(A))$ is independent of the function values $A|_V$,
			\item $V_1\subseteq V_2$ implies $R(L_{V_1}(A)) \subseteq R(L_{V_2}(A))$,
			\item The adjoint $L_V(A)^* \colon L^2_\diamond(\Gamma) \to L^2(V)^d$ is given by $L_V(A)^* f = \nabla u_f^A|_V$.
		\end{enumerate}
	\end{proposition}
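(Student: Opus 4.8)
The plan is to verify each of the three properties directly from the variational definition \eqref{eq:vmo_weak}, treating them in an order that builds on one another; none of the three requires localized potentials yet, so this should be a clean functional-analytic argument. For (iii) I would compute $\inner{L_V(A)F,f}$ for $f\in L^2_\diamond(\Gamma)$ and any $F\in L^2(V)^d$, and massage it into $\inner{F,\nabla u_f^A}_{L^2(V)^d}$. Concretely, $\inner{L_V(A)F,f} = \inner{w_F^A|_\Gamma,f}$, and by the weak formulation \eqref{eq:weak} defining $u_f^A$ (with test function $w_F^A\in H^1_\diamond(\Omega)$) this equals $\overline{\int_\Omega A\nabla u_f^A\cdot\overline{\nabla w_F^A}\,\di x}$; conversely, using $w_F^A$ as defined by \eqref{eq:vmo_weak} with test function $v = u_f^A$ gives $\int_\Omega A^*\nabla w_F^A\cdot\overline{\nabla u_f^A}\,\di x = \int_V F\cdot\overline{\nabla u_f^A}$. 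Matching the conjugates of these two identities (using $\int A^*\nabla w\cdot\overline{\nabla v} = \overline{\int A\nabla v\cdot\overline{\nabla w}}$) yields $\inner{L_V(A)F,f} = \inner{F,\nabla u_f^A|_V}_{L^2(V)^d}$, which is exactly the claim $L_V(A)^*f = \nabla u_f^A|_V$. The only mild care needed is bookkeeping with complex conjugates and with the convention that the dot product is bilinear while the $L^2$ inner product is sesquilinear.

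For (i), I would use the characterization of the range of a bounded operator in terms of its adjoint. Since $R(L_V(A)) = \overline{R(L_V(A))}$ is not automatic, the right tool is the identity $\overline{R(T)} = N(T^*)^\perp$, or more precisely: $R(T_1) \subseteq R(T_2)$ holds iff there is a constant bounding $\norm{T_1^*f}$ by $\norm{T_2^*f}$ — but here I only need equality of ranges, so the cleanest route is: two operators $T_1, T_2$ into the same space have $R(T_1) = R(T_2)$ provided $N(T_1^*) = N(T_2^*)$ \emph{and} the ranges are closed, or alternatively use the standard functional-analysis fact (e.g.\ Douglas' lemma) that $R(T_1)\subseteq R(T_2)$ iff $T_1 T_1^* \leq C\, T_2 T_2^*$. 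By (iii), $L_V(A)^*f = \nabla u_f^A|_V$, whose kernel and norm depend only on the restriction map $f\mapsto \nabla u_f^A|_V$; but $u_f^A$ solves \eqref{eq:weak} which involves $A$ on all of $\Omega$, \emph{not} just on $V$ — wait, that is the point: $L_V(A)^*f = \nabla u_f^A|_V$ depends on $A$ everywhere including on $V$. Hmm. So instead I would argue (i) \emph{before} invoking (iii): given $F\in L^2(V)^d$, the solution $w_F^A$ of \eqref{eq:vmo_weak} depends on $A|_V$ only through the right-hand side $\int_V F\cdot\overline{\nabla v}$, which it does not — the right side is independent of $A$ entirely, and the left side uses $A$ on all of $\Omega$. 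The correct observation is: replacing $A|_V$ by another admissible $\widetilde A|_V$ and replacing $F$ by $(\widetilde A|_V)^{-1}\cdots$ — no. Let me restate: the map $F\mapsto \int_V F\cdot\overline{\nabla v}$ has range (as a functional on $H^1_\diamond$) equal to all functionals of the form $v\mapsto \int_V G\cdot\overline{\nabla v}$, independent of $A$; hence $R(L_V(A)) = \{\,w|_\Gamma : w\in H^1_\diamond,\ A^*\nabla w\ \text{is an } L^2(V)\text{-"divergence-free extension"}\,\}$, and the set of attainable $w$ is $\{w\in H^1_\diamond : \nabla\cdot(A^*\nabla w) = 0 \text{ in } \Omega\setminus \overline{V}\ \text{with homogeneous Neumann data off } \Gamma\}$ — this characterization shows the dependence on $A$ is only through $A|_{\Omega\setminus V}$. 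This is the key conceptual step and the main obstacle: correctly identifying $R(L_V(A))$ with a solution space of the homogeneous equation on $\Omega\setminus\overline V$, for which the values of $A$ inside $V$ are irrelevant.

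For (ii), once (i)-style reasoning is in place this is easy: if $V_1\subseteq V_2$, then every right-hand side functional $v\mapsto \int_{V_1} F\cdot\overline{\nabla v}$ with $F\in L^2(V_1)^d$ is also of the form $v\mapsto \int_{V_2}\widetilde F\cdot\overline{\nabla v}$ by extending $\widetilde F = F$ on $V_1$ and $\widetilde F = 0$ on $V_2\setminus V_1$; hence $w_F^A = w_{\widetilde F}^A$ by uniqueness, so the boundary trace lies in $R(L_{V_2}(A))$, giving $R(L_{V_1}(A))\subseteq R(L_{V_2}(A))$. I would write up (iii) first as it is purely computational, then prove (i) via the range characterization, and deduce (ii) as a corollary of that characterization (or of the zero-extension trick, which is even more elementary). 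The expected difficulty is entirely concentrated in formulating the range of $L_V(A)$ intrinsically; the rest is routine Lax–Milgram and adjoint bookkeeping.
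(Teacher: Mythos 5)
Parts (ii) and (iii) are correct and coincide with the paper's argument: (ii) is exactly the zero-extension trick, and (iii) is the same two-variational-form computation, just written from the $\inner{L_V(A)F,f}$ side instead of the $\inner{L_V(A)^*f,F}$ side (after handling the conjugation, the identity $\int_\Omega A^*\nabla w\cdot\overline{\nabla v}\,\di x = \overline{\int_\Omega A\nabla v\cdot\overline{\nabla w}\,\di x}$ does the work, as you note).

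For (i), however, your route diverges from the paper's and has a genuine gap. You propose to characterize the range intrinsically as $R(L_V(A)) = \{\,w|_\Gamma : w\in H^1_\diamond(\Omega),\ -\nabla\cdot(A^*\nabla w)=0 \text{ in } \Omega\setminus\overline{V},\ \text{homog.\ Neumann off } \Gamma\,\}$, and then observe this description only involves $A|_{\Omega\setminus V}$. The forward inclusion (any $w=w_F^A$ satisfies the exterior homogeneous problem) is immediate from \eqref{eq:vmo_weak}. But the reverse inclusion — that any such $w$ arises as $w_F^A$ for \emph{some} $F\in L^2(V)^d$ — is not at all automatic, and you state it without proof. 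For merely measurable $V$ there is no boundary regularity of $\partial V$ to integrate by parts with, and the abstract version of the claim (a functional $\ell(v)=\int_\Omega A^*\nabla w\cdot\overline{\nabla v}\,\di x$ vanishing on $\{v:\nabla v|_V=0\}$ factors as $\int_V F\cdot\overline{\nabla v}\,\di x$ for some $F\in L^2(V)^d$) requires a lower bound on $\norm{\nabla v|_V}_{L^2(V)}$ relative to $\norm{v}_{H^1_\diamond(\Omega)}$ on the relevant orthogonal complement, which need not hold. The paper sidesteps this entirely with a one-line explicit construction: given $w=w_{F_1}^{A_1}$, put $F_2=(A_2-A_1)^*\nabla w|_V$ and verify directly from \eqref{eq:vmo_weak} (using $A_1=A_2$ outside $V$) that $w = w_{F_1+F_2}^{A_2}$, so $R(L_V(A_1))\subseteq R(L_V(A_2))$, then swap the roles. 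I'd recommend adopting that construction; your intrinsic-characterization idea is conceptually appealing and would be valid under extra regularity of $\partial V$, but as stated it leaves the crux of (i) unjustified.
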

	\begin{proof} \needspace{\lineskip} {}\		
		
		(i): Let $A_1,A_2\in \mathcal{H}(\Omega)$ such that $A_1 = A_2$ outside $V$. We will prove that $R(L_V(A_1)) = R(L_V(A_2))$. Let $\varphi\in R(L_V(A_1))$, i.e.\ $\varphi = w|_\Gamma$ with $w = w_{F_1}^{A_1}$ for some $F_1\in L^2(V)^d$. Now define $F_2 = (A_2-A_1)^*\nabla w|_V \in L^2(V)^d$. Then \eqref{eq:vmo_weak} gives
		\begin{align*}
			\int_\Omega A_2^*\nabla w\cdot\overline{\nabla v}\,\di x &= \int_\Omega (A_2-A_1)^*\nabla w\cdot\overline{\nabla v}\,\di x + \int_\Omega A_1^*\nabla w\cdot\overline{\nabla v}\,\di x \\
			&= \int_V (A_2-A_1)^*\nabla w\cdot\overline{\nabla v}\,\di x + \int_V F_1\cdot\overline{\nabla v}\,\di x \\
			&= \int_V (F_1+F_2)\cdot\overline{\nabla v}\, \di x
		\end{align*}
		for all $v\in H^1_\diamond(\Omega)$. Hence, we have that $\varphi = L_V(A_2)(F_1+F_2)$ so indeed $\varphi\in R(L_V(A_2))$. Repeating the above proof with $A_1$ and $A_2$ interchanged results in $R(L_V(A_1)) = R(L_V(A_2))$ as desired.
		
		(ii): Assume $V_1\subseteq V_2$, and let $\varphi = L_{V_1}(A)F = w|_\Gamma$ for some $F\in L^2(V_1)^d$. Let $\widetilde{F}$ denote the extension by zero of $F$ on the set $V_2$. Then
		\begin{equation*}
			\int_\Omega A^*\nabla w\cdot \overline{\nabla v}\,\di x = \int_{V_1} F\cdot\overline{\nabla v}\,\di x = \int_{V_2} \widetilde{F}\cdot\overline{\nabla v}\,\di x
		\end{equation*}
		for all $v\in H^1_\diamond(\Omega)$. Hence, we also have $\varphi = L_{V_2}(A)\widetilde{F}$, i.e.\ $R(L_{V_1}(A))\subseteq R(L_{V_2}(A))$.
		
		(iii): Let $F\in L^2(V)^d$ and $f\in L^2_\diamond(\Gamma)$, then from the variational forms \eqref{eq:vmo_weak} and \eqref{eq:weak}, for $w = w_F^A$ and $u = u_f^A$, we obtain
		\begin{align*}
			\inner{L_V(A)^* f, F}_{L^2(V)^d} &= \inner{f,L_V(A)F}_{L^2(\Gamma)} = \inner{f,w}_{L^2(\Gamma)} \\
			&= \int_\Omega A \nabla u \cdot\overline{\nabla w}\,\di x = \int_V \overline{F}\cdot \nabla u\,\di x = \inner{\nabla u|_V,F}_{L^2(V)^d}.
		\end{align*}
		Since this holds for all $F\in L^2(V)^d$, we conclude that $L_V(A)^* f = \nabla u|_V$.
	\end{proof}
	
	We note that range inclusions imply norm inequalities for the adjoint operators.
	\begin{lemma}[Lemma~2.5 in \cite{Gebauer2008b}] \label{lemma:rangenorm}
		Let $H$, $K_1$, and $K_2$ be Hilbert spaces and let $T_j\in\mathscr{L}(K_j,H)$ for $j \in \{1,2\}$. Then
		\begin{equation*}
			R(T_1) \subseteq R(T_2) \qquad \text{if and only if} \qquad \exists C>0,\, \forall x\in H\colon \norm{T_1^* x}_{K_1} \leq C \norm{T_2^* x}_{K_2}.
		\end{equation*}
	\end{lemma}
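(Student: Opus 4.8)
This is the operator range-inclusion (Douglas factorization) lemma, and the plan is to prove both implications by producing a bounded operator $S$ that factors $T_1$ through $T_2$, i.e.\ $T_1 = T_2 S$, equivalently $T_1^* = S^* T_2^*$; the norm inequality and the range inclusion are then immediate consequences of such a factorization.

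For the implication ``$\Leftarrow$'': assume $\norm{T_1^* x}_{K_1} \leq C\norm{T_2^* x}_{K_2}$ for all $x\in H$. This estimate shows that the assignment $T_2^* x \mapsto T_1^* x$ is a well-defined linear map on $R(T_2^*)\subseteq K_2$ (if $T_2^* x = T_2^* x'$ then $T_2^*(x-x')=0$, which forces $T_1^*(x-x')=0$), bounded there with norm at most $C$. I would extend it by continuity to $\overline{R(T_2^*)}$ and by zero on the orthogonal complement $\overline{R(T_2^*)}^{\perp} = \ker T_2$, obtaining $S\in\mathscr{L}(K_2,K_1)$ with $\norm{S}\leq C$ and $S T_2^* = T_1^*$ on $H$. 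Passing to adjoints gives $T_1 = T_2 S^*$, hence $R(T_1) = T_2\bigl(S^*(K_1)\bigr) \subseteq R(T_2)$.

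For the implication ``$\Rightarrow$'': assume $R(T_1)\subseteq R(T_2)$. For each $y\in K_1$ the set $\{\,z\in K_2 \mid T_2 z = T_1 y\,\}$ is a non-empty closed affine subspace of $K_2$, so it has a unique element of minimal norm, namely the one lying in $(\ker T_2)^{\perp} = \overline{R(T_2^*)}$; call it $Sy$. Uniqueness of the minimal-norm solution makes $S\colon K_1 \to K_2$ linear, and I would obtain its boundedness from the closed graph theorem: if $y_n\to y$ in $K_1$ and $Sy_n\to w$ in $K_2$, then continuity of $T_1$ and $T_2$ gives $T_2 w = \lim T_2 Sy_n = \lim T_1 y_n = T_1 y$, while $w\in\overline{R(T_2^*)}$ because that subspace is closed, so $w = Sy$. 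Thus $S\in\mathscr{L}(K_1,K_2)$ with $T_2 S = T_1$, and taking adjoints $T_1^* = S^* T_2^*$, which yields $\norm{T_1^* x}_{K_1}\leq \norm{S}\,\norm{T_2^* x}_{K_2}$ for all $x\in H$; choosing $C = \max\{\norm{S},1\}$ ensures $C>0$ even when $S=0$.

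The only nontrivial point is the boundedness of the factoring operator $S$ in the ``$\Rightarrow$'' direction: its well-definedness and linearity come for free from the minimal-norm selection, but no explicit bound is available and one must invoke a closed graph (equivalently open mapping) argument. Everything else is routine Hilbert-space bookkeeping: the orthogonal decomposition $K_2 = \overline{R(T_2^*)}\oplus\ker T_2$, continuity of taking adjoints, and the elementary identities $R(T_2^*)^{\perp} = \ker T_2$ and $(AB)^* = B^* A^*$.
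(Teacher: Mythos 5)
The paper does not prove this lemma; it is imported verbatim (attributed to Lemma~2.5 of Gebauer's \emph{Localized potentials in electrical impedance tomography}) and used as a black box. So there is no in-paper argument to compare against. Your proof is a correct, self-contained rendering of the Douglas range-inclusion/factorization lemma, and it matches the standard argument found in the literature: the ``$\Leftarrow$'' direction builds the factoring operator $S$ on $R(T_2^*)$ via $T_2^*x\mapsto T_1^*x$, with well-definedness and the bound both coming straight from the hypothesis, extended by continuity and by zero on $\ker T_2 = \overline{R(T_2^*)}^{\perp}$, then dualized; the ``$\Rightarrow$'' direction selects the minimal-norm preimage (the one in $(\ker T_2)^{\perp}$), checks linearity via uniqueness of that selection, and invokes the closed graph theorem for boundedness. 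Both halves are airtight; in particular you correctly justify well-definedness of the densely defined $S$ in the first half and verify the closed-graph hypotheses in the second. The only thing I would flag, purely cosmetically, is that in the ``$\Rightarrow$'' direction you get $C=\norm{S}$ directly, and your $\max\{\norm{S},1\}$ guard is only needed to ensure strict positivity in the degenerate case $T_1=0$; since the statement only asks for \emph{some} $C>0$, this is fine and indeed slightly more careful than most textbook phrasings.
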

	
	Finally, we are able to prove the existence of localized potentials.
	
	\begin{theorem} \label{thm:locpot}
		Let $U\subset \overline{\Omega}$ be a relatively open connected set that intersects $\Gamma$ and let $B\subset U$ be a non-empty open set. Assume $A\in \mathcal{H}(\Omega)$ where both $A$ and $A^*$ satisfy the UCP in $U$. Then there are sequences $(f_j)$ in $L^2_\diamond(\Gamma)$ and $(u_j)$ in $H^1_\diamond(\Omega)$, with $u_j = u_{f_j}^A$, such that
		\begin{equation*}
			\lim_{j\to\infty} \int_B \abs{\nabla u_j}^2\,\di x = \infty \qquad \text{and} \qquad \lim_{j\to\infty}\int_{\Omega\setminus U} \abs{\nabla u_j}^2\,\di x = 0. 
		\end{equation*}
	\end{theorem}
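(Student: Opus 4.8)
The plan is to run the range--comparison strategy of \cite{Gebauer2008b} in the anisotropic, possibly non-self-adjoint, setting, using the virtual measurement operators of Proposition~\ref{prop:vmo}. Put $\Omega_0 = \Omega\setminus U$. By Proposition~\ref{prop:vmo}(iii), $\norm{L_B(A)^*f}_{L^2(B)^d}^2 = \int_B\abs{\nabla u_f^A}^2\,\di x$ and $\norm{L_{\Omega_0}(A)^*f}_{L^2(\Omega_0)^d}^2 = \int_{\Omega_0}\abs{\nabla u_f^A}^2\,\di x$, so the theorem amounts to finding $(f_j)$ in $L^2_\diamond(\Gamma)$ with $\norm{L_B(A)^*f_j}\to\infty$ and $\norm{L_{\Omega_0}(A)^*f_j}\to 0$, after which $u_j = u_{f_j}^A$ does the job. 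By Lemma~\ref{lemma:rangenorm} in contrapositive form (plus a homogeneity argument to absorb the case $L_{\Omega_0}(A)^*f = 0$), such a sequence exists once $R(L_B(A))\not\subseteq R(L_{\Omega_0}(A))$: pick $h_j$ with $\norm{L_{\Omega_0}(A)^*h_j}\le 1$ and $\norm{L_B(A)^*h_j}\ge j$, and set $f_j = h_j/\norm{L_B(A)^*h_j}^{1/2}$. So everything reduces to this range non-inclusion.

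Before proving it I would simplify the geometry. Both shrinking $B$ and shrinking $U$ are harmless: by Proposition~\ref{prop:vmo}(ii), $B^*\subseteq B$ gives $R(L_{B^*}(A))\subseteq R(L_B(A))$, while $U^*\subseteq U$ gives $\Omega\setminus U\subseteq\Omega\setminus U^*$ and hence $R(L_{\Omega\setminus U}(A))\subseteq R(L_{\Omega\setminus U^*}(A))$; moreover a sequence localizing on $B^*$ a fortiori localizes on $B$. So fix a small ball $B^* = B(p,r)$ with $\overline{B^*}\subset U\cap\Omega$, and choose a relatively open connected $U^*\subseteq U$ that meets $\Gamma$, of the shape ``a ball around $\overline{B^*}$ joined to $\Gamma$ by a thin channel running inside $U$'', arranged so that the punctured set $U^*\setminus\overline{B^*}$ is still connected and still meets $\Gamma$. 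It then suffices to show $R(L_{B^*}(A))\not\subseteq R(L_{\Omega\setminus U^*}(A))$.

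This follows from two claims. \emph{Claim 1:} $R(L_{B^*}(A))\cap R(L_{\Omega\setminus U^*}(A)) = \{0\}$. If $\varphi$ lies in the intersection, write $\varphi = w_F^A|_\Gamma$ with $F\in L^2(B^*)^d$ and $\varphi = w_G^A|_\Gamma$ with $G\in L^2(\Omega\setminus U^*)^d$. Reading off \eqref{eq:vmo_weak}, $w_F^A$ is $A^*$-harmonic in $\Omega\setminus\overline{B^*}$ and, its source being compactly inside $\Omega$, satisfies $\nu\cdot(A^*\nabla w_F^A) = 0$ on all of $\partial\Omega$; while $w_G^A$ is $A^*$-harmonic in $U^{*\circ}$ and, since $\overline{\Omega\setminus U^*}$ avoids $\Gamma\cap U^*$, has vanishing co-normal derivative on $\Gamma\cap U^*$. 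Hence $\psi = w_F^A - w_G^A$ is an $A^*$-solution in $U^{*\circ}\setminus\overline{B^*}$ with vanishing Cauchy data on the nonempty open piece $\Gamma\cap U^*$, and the UCP for $A^*$ forces $\psi\equiv 0$ there. Gluing $w_F^A$ on $\Omega\setminus\overline{B^*}$ to $w_G^A$ on $U^{*\circ}$ (they agree on the overlapping annular shell) produces $\widehat w\in H^1_\diamond(\Omega)$ that is $A^*$-harmonic in all of $\Omega$ with $\nu\cdot(A^*\nabla\widehat w) = 0$ on $\partial\Omega$; testing its weak formulation against $\widehat w$ and using coercivity (note $(A^*)\upR = \AR\ge cI$) gives $\widehat w\equiv 0$, so $\varphi = w_F^A|_\Gamma = 0$. \emph{Claim 2:} $R(L_{B^*}(A))\ne\{0\}$. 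Take $f\in L^2_\diamond(\Gamma)$ with $f|_{\Gamma\cap U}\not\equiv 0$. If $u_f^A$ were constant on $B^*\subseteq U$, then by the UCP for $A$ it would be constant on $U^\circ$, whence $\nu\cdot(A\nabla u_f^A) = 0$, i.e.\ $f = 0$, on $\Gamma\cap U$ --- a contradiction. So $L_{B^*}(A)^*f = \nabla u_f^A|_{B^*}\ne 0$. Claims~1 and~2 give $R(L_{B^*}(A))\not\subseteq R(L_{\Omega\setminus U^*}(A))$, hence the required non-inclusion, and the theorem follows.

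The main obstacle is the bookkeeping in the middle two steps: constructing the channel domain $U^*$ so that $U^*\setminus\overline{B^*}$ stays connected and touches $\Gamma$; justifying the co-normal-derivative statements for $w_F^A$ and $w_G^A$ near the relevant boundary pieces (which uses that the weak forms extend from $H^1_\diamond(\Omega)$ to all of $H^1(\Omega)$ since gradients kill constants); verifying that the UCP assumed \emph{in $U$} may legitimately be applied to the $A^*$-solution $\psi$ on the punctured connected subdomain $U^*\setminus\overline{B^*}$, and to $u_f^A$ on $U$ itself; and, in the gluing step, checking that $\widehat w$ is genuinely in $H^1(\Omega)$ and $A^*$-harmonic across $\partial B^*$ --- which is exactly where one needs $w_F^A$ and $w_G^A$ to coincide on a full neighbourhood of $\partial B^*$, not merely on one side.
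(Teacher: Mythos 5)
Your proposal follows essentially the same route as the paper's own proof: reduce via Lemma~\ref{lemma:rangenorm} to a range non-inclusion for virtual measurement operators on a small ball inside $B$ and on the complement of a connected channel inside $U$ reaching $\Gamma$, prove triviality of the intersection by gluing two $A^*$-solutions that agree on an annular shell using the UCP for $A^*$, and prove non-triviality of $R(L_{B^*}(A))$ via the UCP for $A$. The differences (the explicit channel construction for $U^*$, and exhibiting a single nonzero element rather than injectivity on a subspace of boundary data supported on $\partial V\cap\Gamma$) are cosmetic.
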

	\begin{proof}
		We let $V_1 \Subset B \subset V \subseteq U$ where $V_1$ and $V$ are relatively open non-empty connected sets, and define $V_2 = \Omega\setminus\overline{V}$. These sets are chosen such that $V$ intersects $\Gamma$ and that $\partial (\Omega\setminus \overline{V_j})$ are Lipschitz continuous.
		
		We start by proving that $R(L_{V_1}(A))$ and $R(L_{V_2}(A))$ have a trivial intersection. Hence, let $\varphi \in R(L_{V_1}(A)) \cap R(L_{V_2}(A))$, i.e.\ $\varphi = w_j|_\Gamma$ with $w_j = w_{F_j}^A$ for some $F_j\in L^2(V_j)^d$ for $j\in\{1,2\}$. From \eqref{eq:vmo_weak} and the weak form of the conductivity problem \eqref{eq:weak}, with conductivity $A^*$, we thus have
		\begin{align*}
			-\nabla\cdot (A^*\nabla w_j) &= 0 \text{ in } \Omega\setminus \overline{V_j} \\
			 \nu\cdot(A^*\nabla w_j) &= 0 \text{ on } (\partial\Omega)\setminus \overline{V_j}.
		\end{align*}
		In particular $\nu\cdot(A^*\nabla w_j) \equiv 0$ on $\partial V \cap \Gamma$, i.e.\ the Cauchy data of $w_1$ and $w_2$ coincide on $\partial V\cap \Gamma$. Unique continuation entails that $w_1 = w_2$ in $V\setminus\overline{V_1}$. We may now ``glue'' $w_1$ and $w_2$ together, using that they agree in $V\setminus\overline{V_1}$:
		\begin{equation*}
			u = \begin{dcases}
				w_1 & \text{in } \Omega\setminus\overline{V_1}, \\
				w_2 & \text{in } V_1.
			\end{dcases}
		\end{equation*}
		Now we have $-\nabla\cdot(A^*\nabla u) = 0$ in $\Omega$ and $\nu\cdot (A^*\nabla u) = \nu\cdot(A^*\nabla w_1) \equiv 0$ on $\partial\Omega$. By uniqueness of such a solution in $H^1_\diamond(\Omega)$, it follows that $u\equiv 0$ in $\Omega$, so in particular $\varphi = w_1|_\Gamma = u|_\Gamma \equiv 0$, i.e.
		\begin{equation*}
			R(L_{V_1}(A)) \cap R(L_{V_2}(A)) = \{0\}.
		\end{equation*}
		
		Next we prove that $R(L_{V_1}(A))$ is non-trivial. Suppose that $0\equiv L_{V_1}(A)^* f = \nabla u_f^A|_{V_1}$, then $u_f^A$ is a constant function in $V_1$, and also in $V$ by unique continuation. In particular, $f = \nu\cdot (A\nabla u_f^A)|_\Gamma \equiv 0$ on $\partial V\cap \Gamma$. Thus, $L_{V_1}(A)^*$ is injective on
		\begin{equation*}
			W = \{\, f\in L^2_\diamond(\Gamma) \mid \suppm f \subseteq \partial V\cap\Gamma\,\}.
		\end{equation*}
		Since $\overline{R(L_{V_1}(A))} = N(L_{V_1}(A)^*)^\perp$ then $R(L_{V_1}(A))$ contains a dense subset of $W$ and must be non-trivial.
		
		In total we have that $R(L_{V_1}(A)) \not\subseteq R(L_{V_2}(A))$. From Lemma~\ref{lemma:rangenorm} this implies the existence of a sequence $(f_j)$ in $L^2_\diamond(\Gamma)$ such that $u_j = u_{f_j}^A$ satisfy (c.f.\ Proposition~\ref{prop:vmo}(iii)):
		\begin{equation*}
			\int_B \abs{\nabla u_j}^2\,\di x \geq \int_{V_1} \abs{\nabla u_j}^2\,\di x = \norm{L_{V_1}(A)^* f_j}_{L^2(V_1)^d}^2 \to \infty
		\end{equation*}
		and
		\begin{equation*}
			\int_{\Omega\setminus U} \abs{\nabla u_j}^2\,\di x \leq \int_{V_2} \abs{\nabla u_j}^2\,\di x = \norm{L_{V_2}(A)^* f_j}_{L^2(V_2)^d}^2 \to 0
		\end{equation*}
		for $j\to\infty$.
	\end{proof}
	
	The following result shows that the sequence $(f_j)$ of Neumann conditions (e.g.\ from Theorem~\ref{thm:locpot}) such that the corresponding power $\abs{\nabla u_j}^2$ is localized, can be chosen independently of the function values of $A$ on the set where the power tends to zero.
	
	\begin{proposition} \label{prop:simultloc}
		Let $B$ and $U$ be as in Theorem~\ref{thm:locpot}, and assume for $A_1\in \mathcal{H}(\Omega)$ and $(f_j)$ in $L^2_\diamond(\Gamma)$ that $u_j = u_{f_j}^{A_1}$ satisfy
		\begin{equation*}
			\lim_{j\to\infty} \int_B \abs{\nabla u_j}^2\,\di x = \infty \qquad \text{and} \qquad \lim_{j\to\infty}\int_{\Omega\setminus U} \abs{\nabla u_j}^2\,\di x = 0.
		\end{equation*}
		If $A_2\in \mathcal{H}(\Omega)$ with $\suppm(A_1-A_2)\subseteq \overline{\Omega}\setminus U$, then $\widehat{u}_j = u_{f_j}^{A_2}$ also satisfy
		\begin{equation*}
			\lim_{j\to\infty} \int_B \abs{\nabla \widehat{u}_j}^2\,\di x = \infty \qquad \text{and} \qquad \lim_{j\to\infty}\int_{\Omega\setminus U} \abs{\nabla \widehat{u}_j}^2\,\di x = 0.
		\end{equation*}
	\end{proposition}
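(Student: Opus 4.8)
The plan is to control the difference $w_j = u_j - \widehat{u}_j \in H^1_\diamond(\Omega)$ by an energy estimate of C\'ea type, exploiting that the perturbation $A_1 - A_2$ is supported inside $\overline{\Omega}\setminus U$: this forces the ``source term'' driving $w_j$ to depend only on $\nabla u_j$ restricted to $\Omega\setminus U$, where it is asymptotically negligible by hypothesis. Once $\norm{w_j}_{H^1_\diamond(\Omega)}\to 0$ is established, both localization properties transfer from $(u_j)$ to $(\widehat{u}_j)$ by the triangle inequality.

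Concretely, I would first subtract the weak formulations \eqref{eq:weak} for $u_j = u_{f_j}^{A_1}$ and $\widehat{u}_j = u_{f_j}^{A_2}$; rewriting $A_1\nabla u_j - A_2\nabla\widehat{u}_j = (A_1-A_2)\nabla u_j + A_2\nabla w_j$ yields
\begin{equation*}
	\int_\Omega A_2\nabla w_j\cdot\overline{\nabla v}\,\di x = -\int_\Omega (A_1-A_2)\nabla u_j\cdot\overline{\nabla v}\,\di x \qquad \text{for all } v\in H^1_\diamond(\Omega).
\end{equation*}
Since $\Omega\cap U$ is open in $\R^d$ and $\suppm(A_1-A_2)\subseteq\overline{\Omega}\setminus U$, one has $A_1 = A_2$ a.e.\ on $\Omega\cap U$, so the right-hand integral is really one over $\Omega\setminus U$. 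Next I would test with $v = w_j$ and use the coercivity of $A_2$ (with a constant $c>0$) on the left, together with the bound $\abs{(A_1-A_2)(x)\xi}\leq\norm{A_1-A_2}_*\abs{\xi}$ and Cauchy--Schwarz on the right, to get
\begin{equation*}
	c\norm{w_j}_{H^1_\diamond(\Omega)}^2 \leq \redel\int_\Omega A_2\nabla w_j\cdot\overline{\nabla w_j}\,\di x \leq \norm{A_1-A_2}_*\Bigl(\int_{\Omega\setminus U}\abs{\nabla u_j}^2\,\di x\Bigr)^{1/2}\norm{w_j}_{H^1_\diamond(\Omega)},
\end{equation*}
and hence (the case $w_j = 0$ being vacuous) $\norm{w_j}_{H^1_\diamond(\Omega)} \leq c^{-1}\norm{A_1-A_2}_*\bigl(\int_{\Omega\setminus U}\abs{\nabla u_j}^2\,\di x\bigr)^{1/2}\to 0$ as $j\to\infty$, by the assumed localization of $(u_j)$.

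With $\norm{w_j}_{H^1_\diamond(\Omega)}\to 0$ in hand, I would finish by writing $\nabla\widehat{u}_j = \nabla u_j - \nabla w_j$ and applying the triangle inequality in $L^2(S)^d$ together with $\norm{\nabla w_j}_{L^2(S)^d}\leq\norm{w_j}_{H^1_\diamond(\Omega)}$ for any measurable $S\subseteq\Omega$: taking $S=\Omega\setminus U$ gives $\bigl(\int_{\Omega\setminus U}\abs{\nabla\widehat{u}_j}^2\,\di x\bigr)^{1/2}\leq\bigl(\int_{\Omega\setminus U}\abs{\nabla u_j}^2\,\di x\bigr)^{1/2}+\norm{w_j}_{H^1_\diamond(\Omega)}\to 0$, and taking $S=B$ gives $\bigl(\int_{B}\abs{\nabla\widehat{u}_j}^2\,\di x\bigr)^{1/2}\geq\bigl(\int_{B}\abs{\nabla u_j}^2\,\di x\bigr)^{1/2}-\norm{w_j}_{H^1_\diamond(\Omega)}\to\infty$, which are precisely the two asserted limits.

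I do not expect a serious obstacle here: the core of the argument is the standard stability estimate for the forward problem under a perturbation localized away from $U$. The only points needing a little care are the measure-theoretic reduction of the source integral to $\Omega\setminus U$ (correctly using the \emph{essential} support hypothesis and that $\Omega\cap U$ is open in $\R^d$) and remembering to pass to the real part of the form before invoking the coercivity estimate; both are routine.
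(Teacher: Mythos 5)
Your proof is correct, and it takes a genuinely different route from the paper's. The paper reuses its virtual-measurement machinery: Proposition~\ref{prop:vmo}(i) says $R(L_V(A))$ depends only on $A$ outside $V$, so both $R(L_{V_2}(A_1))=R(L_{V_2}(A_2))$ and $R(L_{V_1\cup V_2}(A_1))=R(L_{V_1\cup V_2}(A_2))$; Lemma~\ref{lemma:rangenorm} then converts these range identities into two-sided norm comparisons for the adjoints, and the conclusion follows from the Pythagorean identity $\norm{L_{V_1}(A_2)^*f_j}^2=\norm{L_{V_1\cup V_2}(A_2)^*f_j}^2-\norm{L_{V_2}(A_2)^*f_j}^2$. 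Your argument instead proves the strictly stronger statement $\norm{u_j-\widehat u_j}_{H^1_\diamond(\Omega)}\to 0$ by a direct C\'ea-type stability estimate: subtracting the weak forms, using that $\suppm(A_1-A_2)\subseteq\overline\Omega\setminus U$ localizes the source term to $\Omega\setminus U$, and invoking coercivity of $\redel A_2$. Both limit statements then follow from the (reverse) triangle inequality. Your version is more elementary, does not require the $L_V$ formalism or Lemma~\ref{lemma:rangenorm}, and makes the stability mechanism explicit; the paper's version is shorter given the machinery already built for Theorem~\ref{thm:locpot} and is stated purely in terms of the quantities appearing in its conclusion. The only small points you should make sure are spelled out, which you already flag, are that relative openness of $U$ in $\overline\Omega$ gives openness of $\Omega\cap U$ in $\R^d$, so the essential-support hypothesis indeed gives $A_1=A_2$ a.e.\ on $\Omega\cap U$, and that the coercivity estimate $\redel\int_\Omega A_2\nabla w_j\cdot\overline{\nabla w_j}\,\di x\geq c\norm{w_j}_{H^1_\diamond(\Omega)}^2$ from the forward-problem section is what is used on the left before Cauchy--Schwarz is applied on the right.
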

	\begin{proof}
		Let $V_1 = B$ and $V_2 = \overline{\Omega}\setminus U$. From Proposition~\ref{prop:vmo}(iii) it amounts to proving
		\begin{equation*}
			\lim_{j\to\infty} \norm{L_{V_1}(A_2)^* f_j}_{L^2(V_1)^d} = \infty \quad\text{and}\quad \lim_{j\to\infty} \norm{L_{V_2}(A_2)^* f_j}_{L^2(V_2)^d} = 0.´
		\end{equation*}
		From Proposition~\ref{prop:vmo}(i) we have that $R(L_{V_2}(A_1)) = R(L_{V_2}(A_2))$. Hence a combination of Proposition~\ref{prop:vmo}(iii) and Lemma~\ref{lemma:rangenorm} gives
		\begin{equation*}
			\norm{L_{V_2}(A_2)^* f_j}_{L^2(V_2)^d} \leq C\norm{L_{V_2}(A_1)^* f_j}_{L^2(V_2)^d} \to 0 \text{ for } j\to\infty.
		\end{equation*}
		By the same argumentation, we also have $R(L_{V_1\cup V_2}(A_1)) = R(L_{V_1 \cup V_2}(A_2))$, which implies
		\begin{align*}
			\norm{L_{V_1 \cup V_2}(A_2)^* f_j}_{L^2(V_1\cup V_2)^d} &\geq c\norm{L_{V_1 \cup V_2}(A_1)^* f_j}_{L^2(V_1\cup V_2)^d} \\
			&\geq c\norm{L_{V_1}(A_1)^* f_j}_{L^2(V_1)^d} \to \infty \text{ for } j\to\infty.
		\end{align*}
		Finally we combine the two above limits, using Proposition~\ref{prop:vmo}(iii),
		\begin{equation*}
			\norm{L_{V_1}(A_2)^* f_j}_{L^2(V_1)^d}^2 = \norm{L_{V_1 \cup V_2}(A_2)^* f_j}_{L^2(V_1\cup V_2)^d}^2 - \norm{L_{V_2}(A_2)^* f_j}_{L^2(V_2)^d}^2 \to \infty \text{ for } j\to\infty,
		\end{equation*}
		thereby concluding the proof.
	\end{proof}
	
	\section{Proof of Theorem~\ref{thm:main}} \label{sec:mainproof}
	
	The proof that $D \subseteq C$ implies $\Lambda^-_C \geq \Lambda(A_D) \geq \Lambda^+_C$ follows directly from Assumption~\ref{assump:recon}(i) and~\eqref{eq:simplemono}, since we have
	\begin{equation*}
		A^-_{C} \leq A_D \leq A^+_{C}
	\end{equation*}
	in the Loewner order in $\Omega$.
	
	Now assume that $D \not\subseteq C$, i.e.~that $D^\bullet\not\subseteq C$. Since both sets are closures of opens sets and have connected complements, there is a relatively open set that connects $D^\bullet\setminus C$ with $\Gamma$ that we may use for localization. Specifically, there exist a relatively open connected set $U\subset \overline{\Omega}\setminus C$ that intersects $\Gamma$ and an open ball $B\subset U \cap D$. Moreover, Assumption~\ref{assump:recon}(iii) ensures that we can pick $B$ and $U$ such that we arrive at one of two cases:
	\begin{itemize}
		\item Case 1: $A_D - A_0$ is positive semidefinite in $U$ and uniformly positive definite in $B$.
		\item Case 2: $A_D - A_0$ is negative semidefinite in $U$ and uniformly negative definite in $B$.
	\end{itemize}
	In the first case we will prove that $\Lambda(A_D) \not\geq \Lambda^+_C$, and in the second case we will prove that $\Lambda^-_C \not\geq \Lambda(A_D)$. Together the two cases prove the contrapositive formulation of the assertion 
	\begin{equation*}
		\Lambda^-_C \geq \Lambda(A_D) \geq \Lambda^+_C \quad \text{\emph{implies}} \quad D \subseteq C.
	\end{equation*}
	
	\subsection*{Case 1}  Using Theorem~\ref{thm:locpot} and Proposition~\ref{prop:simultloc} (as $C\subset\Omega\setminus U$), there is a sequence $(f_j)$ in $L^2_\diamond(\Gamma)$ such that $u_j = u_{f_j}^{A^+_{C}}$ satisfy
	\begin{equation*}
		\lim_{j\to\infty} \int_B \abs{\nabla u_j}^2\,\di x = \infty \qquad \text{and} \qquad \lim_{j\to\infty}\int_{\Omega\setminus U} \abs{\nabla u_j}^2\,\di x = 0.
	\end{equation*}
	Note that $A_C^+ = A_0$ in $U$ where $A_D-A_0$ is positive semidefinite, and also in $B$ where $A_D-A_0$ is uniformly positive definite. From Proposition~\ref{prop:bnd}(ii) then $A_0A_D^{-1}(A_0-A_D)$ is negative semidefinite in $U$, and there exists a scalar $\tilde{c} > 0$ such that
	\begin{equation*}
		A_0A_D^{-1}(A_0-A_D) \leq -\tilde{c}I \text{ in } B.
	\end{equation*}
	Now Proposition~\ref{prop:mono} gives
	\begin{align*}
		\inner{f_j,\bigl[\Lambda(A_D) - \Lambda^+_C\bigr]f_j} &\leq \int_\Omega A^+_{C}A_D^{-1}(A^+_{C}-A_D)\nabla u_j\cdot\overline{\nabla u_j}\,\di x \\
		&\leq \norm{A^+_{C}A_D^{-1}(A^+_{C}-A_D)}_*\int_{\Omega\setminus U} \abs{\nabla u_j}^2\,\di x - \tilde{c}\int_B \abs{\nabla u_j}^2\,\di x.
	\end{align*}
	We conclude that
	\begin{equation*}
		\lim_{j\to\infty}\inner{f_j,\bigl[\Lambda(A_D) - \Lambda^+_C \bigr]f_j} = -\infty,
	\end{equation*}
	in particular, $\Lambda(A_D) \not\geq \Lambda^+_C$.
	
	\subsection*{Case 2} Using Theorem~\ref{thm:locpot} and Proposition~\ref{prop:simultloc} (as $C\subset\Omega\setminus U$), there is a sequence $(f_j)$ in $L^2_\diamond(\Gamma)$ such that $u_j = u_{f_j}^{A^-_{C}}$ satisfy
	\begin{equation*}
		\lim_{j\to\infty} \int_B \abs{\nabla u_j}^2\,\di x = \infty \qquad \text{and} \qquad \lim_{j\to\infty}\int_{\Omega\setminus U} \abs{\nabla u_j}^2\,\di x = 0.
	\end{equation*}
	Note that $A_C^- = A_0$ in $U$ where $A_D-A_0$ is negative semidefinite, and also in $B$ where $A_D-A_0$ is uniformly negative definite. From Proposition~\ref{prop:mono} we have
	\begin{align*}
		\inner{f_j,\bigl[\Lambda^-_C - \Lambda(A_D)\bigr]f_j} &\leq \int_\Omega (A_D-A^-_{C})\nabla u_j\cdot\overline{\nabla u_j}\,\di x \\
		&\leq \norm{A_D-A^-_{C}}_*\int_{\Omega\setminus U} \abs{\nabla u_j}^2\,\di x - c\int_B \abs{\nabla u_j}^2\,\di x
	\end{align*}
	for a scalar $c>0$. We conclude that
	\begin{equation*}
		\lim_{j\to\infty}\inner{f_j,\bigl[\Lambda^-_C - \Lambda(A_D)\bigr]f_j} = -\infty,
	\end{equation*}
	in particular, $\Lambda^-_C \not\geq \Lambda(A_D)$.
	
	\section{Proof of Theorem~\ref{thm:mainlinear}} \label{sec:mainlinearproof}
	
	Assume $D\subseteq C$ and let $u_0 = u_f^{A_0}$. From Proposition~\ref{prop:mono} we have
	\begin{align*}
		\inner{f,\bigl[\Lambda(A_D)-\Lambda(A_0)-\DLambda^+_{C}\bigr]f} &\geq \int_\Omega (A_0 - A_D)\nabla u_0\cdot\overline{\nabla u_0}\,\di x + \int_C(\beta I-A_0)\nabla u_0\cdot\overline{\nabla u_0}\,\di x \\
		&= \int_D (A_0 - A_D)\nabla u_0\cdot\overline{\nabla u_0}\,\di x + \int_C(\beta I-A_0)\nabla u_0\cdot\overline{\nabla u_0}\,\di x \\
		&\geq \int_{C\setminus D}(\beta I-A_0)\nabla u_0\cdot\overline{\nabla u_0}\,\di x \geq 0.
	\end{align*}
	
	For the next part we need an additional bound. Let $\xi\in\C^d$, then we have
	\begin{equation} \label{eq:matbnd}
		A_0A_D^{-1}A_0\xi\cdot\overline{\xi} = A_D^{-1}A_0\xi\cdot\overline{A_0\xi} \leq \alpha^{-1}\abs{A_0\xi}^2\leq \tfrac{\beta^2}{\alpha}\abs{\xi}^2.
	\end{equation}
	Note that $A_0 \leq \beta I \leq \frac{\beta^2}{\alpha}I$. Now from Proposition~\ref{prop:mono} and \eqref{eq:matbnd} we get
	\begin{align*}
		\inner{f,\bigl[\Lambda(A_0)-\Lambda(A_D)+\DLambda^-_{C}\bigr]f} & \\
		&\hspace{-3cm}\geq \int_\Omega A_0A_D^{-1}(A_D-A_0)\nabla u_0\cdot\overline{\nabla u_0}\,\di x + \int_C \bigl(\tfrac{\beta^2}{\alpha}I - A_0\bigr)\nabla u_0\cdot\overline{\nabla u_0}\,\di x \\
		&\hspace{-3cm}= \int_D (A_0 - A_0A_D^{-1}A_0)\nabla u_0\cdot\overline{\nabla u_0}\,\di x + \int_C \bigl(\tfrac{\beta^2}{\alpha}I - A_0\bigr)\nabla u_0\cdot\overline{\nabla u_0}\,\di x \\
		&\hspace{-3cm}\geq \int_{C\setminus D} \bigl(\tfrac{\beta^2}{\alpha}I - A_0\bigr)\nabla u_0\cdot\overline{\nabla u_0}\,\di x\geq 0.
	\end{align*}
	This concludes the proof for the first part of the theorem statement.
	
	Now assume that $D \not\subseteq C$, i.e.~that $D^\bullet\not\subseteq C$. Since both sets are closures of opens sets and have connected complements, there is a relatively open set that connects $D^\bullet\setminus C$ with $\Gamma$ that we may use for localization. Specifically, there exist a relatively open connected set $U\subset \overline{\Omega}\setminus C$ that intersects $\Gamma$ and an open ball $B\subset U \cap D$. Moreover, Assumption~\ref{assump:recon}(iii) ensures that we can pick $B$ and $U$ such that we arrive at one of two cases:
	\begin{itemize}
		\item Case 1: $A_D - A_0$ is positive semidefinite in $U$ and uniformly positive definite in $B$.
		\item Case 2: $A_D - A_0$ is negative semidefinite in $U$ and uniformly negative definite in $B$.
	\end{itemize}
	In the first case we will prove that $\Lambda(A_D) - \Lambda(A_0) \not\geq \DLambda^+_{C}$, and in the second case we will prove that $\DLambda^-_{C} \not\geq \Lambda(A_D) - \Lambda(A_0)$. Together the two cases prove the contrapositive formulation of the assertion 
	\begin{equation*}
		\DLambda^-_{C} \geq \Lambda(A_D) - \Lambda(A_0) \geq \DLambda^+_{C} \quad \text{\emph{implies}} \quad D \subseteq C.
	\end{equation*}
	For both cases we use the same sequence of localized potentials. From Theorem~\ref{thm:locpot} there is a sequence $(f_j)$ in $L^2_\diamond(\Gamma)$ such that $u_j = u_{f_j}^{A_0}$ satisfy
	\begin{equation*}
		\lim_{j\to\infty} \int_B \abs{\nabla u_j}^2\,\di x = \infty \qquad \text{and} \qquad \lim_{j\to\infty}\int_{\Omega\setminus U} \abs{\nabla u_j}^2\,\di x = 0.
	\end{equation*}
	
	\subsection*{Case 1} We now combine Proposition~\ref{prop:mono} with Proposition~\ref{prop:bnd}(ii), using the facts that $A_D-A_0$ is positive semidefinite in $U$ and $A_D-A_0\geq cI$ in $B$ for some scalar $c>0$, which results in
	\begin{align*}
		\inner{f_j,\bigl[\Lambda(A_D)-\Lambda(A_0)-\DLambda^+_{C}\bigr]f_j} &\\
		&\hspace{-4cm}\leq \int_\Omega A_0A_D^{-1}(A_0-A_D)\nabla u_j\cdot\overline{\nabla u_j}\,\di x + \int_C (\beta I - A_0)\nabla u_j\cdot\overline{\nabla u_j}\,\di x \\
		&\hspace{-4cm}\leq \norm{A_0A_D^{-1}(A_0-A_D)}_*\int_{\Omega\setminus U}\abs{\nabla u_j}^2\,\di x - c\bigl(\tfrac{\alpha}{\beta}\bigr)^2\int_B \abs{\nabla u_j}^2\,\di x + \norm{\beta I-A_0}_*\int_C \abs{\nabla u_j}^2\,\di x.
	\end{align*}
	Since $C\subset\Omega\setminus U$, we conclude that
	\begin{equation*}
		\lim_{j\to\infty}\inner{f_j,\bigl[\Lambda(A_D)-\Lambda(A_0)-\DLambda^+_{C}\bigr]f_j} = -\infty,
	\end{equation*}
	in particular, $\Lambda(A_D)-\Lambda(A_0)\not\geq \DLambda^+_{C}$.
	
	\subsection*{Case 2} We use Proposition~\ref{prop:mono} together with the facts that $A_D-A_0$ is negative semidefinite in $U$ and $A_D-A_0\leq -cI$ in $B$ for some scalar $c>0$, which results in
	\begin{align*}
		\inner{f_j,\bigl[\Lambda(A_0)-\Lambda(A_D)+\DLambda^-_{C}\bigr]f_j} &\\
		&\hspace{-4cm}\leq \int_\Omega (A_D-A_0)\nabla u_j\cdot\overline{\nabla u_j}\,\di x + \int_C \bigl(\tfrac{\beta^2}{\alpha} I - A_0\bigr)\nabla u_j\cdot\overline{\nabla u_j}\,\di x \\
		&\hspace{-4cm}\leq \norm{A_D - A_0}_*\int_{\Omega\setminus U}\abs{\nabla u_j}^2\,\di x - c\int_B \abs{\nabla u_j}^2\,\di x + \norm{\tfrac{\beta^2}{\alpha} I - A_0}_*\int_C \abs{\nabla u_j}^2\,\di x.
	\end{align*}
	Since $C\subset\Omega\setminus U$, we conclude that
	\begin{equation*}
		\lim_{j\to\infty}\inner{f_j,\bigl[\Lambda(A_0)-\Lambda(A_D)+\DLambda^-_{C}\bigr]f_j} = -\infty,
	\end{equation*}
	in particular, $\DLambda^-_{C} \not\geq \Lambda(A_D)-\Lambda(A_0)$.
	
	\section{Proof of Theorem~\ref{thm:mainlinearinnerpos}} \label{sec:mainposproof}
	
	Assume $B\subseteq D$ and let $u_0 = u_f^{A_0}$. By assumption, $A_D-A_0\geq cI$ in $D$ for some $c>0$, from which Proposition~\ref{prop:bnd}(ii) implies that
	\begin{equation*}
		A_0A_D^{-1}(A_D-A_0) \geq c\bigl(\tfrac{\alpha}{\beta}\bigr)^2 I \text{ in } D.
	\end{equation*}
	By Proposition~\ref{prop:mono}, we have
	\begin{align*}
		\inner{f,\bigl[ \DLambdah^+_B + \Lambda(A_0) - \Lambda(A_D) \bigr]f} &\geq -c\bigl(\tfrac{\alpha}{\beta}\bigr)^2\int_B \abs{\nabla u_0}^2\,\di x + \int_\Omega A_0A_D^{-1}(A_D-A_0)\nabla u_0\cdot\overline{\nabla u_0}\,\di x \\
		&\geq c\bigl(\tfrac{\alpha}{\beta}\bigr)^2\int_{D\setminus B} \abs{\nabla u_0}^2\,\di x \geq 0.
	\end{align*}
	
	Next, assume $B \not\subset D^\bullet$. Since $D^\bullet$ is closed and has connected complement, while $B$ is a non-empty open set, there exist a relatively open and connected set $U \subseteq \overline{\Omega}\setminus D^\bullet$ intersecting $\Gamma$ and~$B$, and an open ball $\widehat{B}\subset U\cap B$. From Theorem~\ref{thm:locpot} there is a sequence $(f_j)$ in $L^2_\diamond(\Gamma)$ such that $u_j = u_{f_j}^{A_0}$ satisfy
	\begin{equation*}
		\lim_{j\to\infty} \int_{\widehat{B}} \abs{\nabla u_j}^2\,\di x = \infty \qquad \text{and} \qquad \lim_{j\to\infty}\int_{\Omega\setminus U} \abs{\nabla u_j}^2\,\di x = 0.
	\end{equation*}
	Since $D\subset \Omega\setminus U$, Proposition~\ref{prop:mono} gives
	\begin{align*}
		\inner{f_j,\bigl[ \DLambdah^+_B + \Lambda(A_0) - \Lambda(A_D) \bigr]f_j} &\leq -c\bigl(\tfrac{\alpha}{\beta}\bigr)^2\int_B \abs{\nabla u_j}^2\,\di x + \int_\Omega (A_D-A_0)\nabla u_j\cdot\overline{\nabla u_j}\,\di x \\
		&\leq -c\bigl(\tfrac{\alpha}{\beta}\bigr)^2\int_{\widehat{B}} \abs{\nabla u_j}^2\,\di x + \norm{A_D-A_0}_*\int_{\Omega\setminus U} \abs{\nabla u_j}^2\,\di x.
	\end{align*}
	We conclude that
	\begin{equation*}
		\lim_{j\to\infty}\inner{f_j,\bigl[ \DLambdah^+_B + \Lambda(A_0) - \Lambda(A_D) \bigr]f_j} = -\infty,
	\end{equation*}
	in particular, $\Lambda(A_D) - \Lambda(A_0) \not\leq \DLambdah^+_{B}$. This proves the contrapositive formulation of the assertion
	\begin{equation*}
		\Lambda(A_D) - \Lambda(A_0) \leq \DLambdah^+_{B} \quad \text{\emph{implies}} \quad B \subset D^\bullet. 
	\end{equation*}
	
	\section{Proof of Theorem~\ref{thm:mainlinearinnerneg}} \label{sec:mainnegproof}
	
	Assume $B\subseteq D$ and let $u_0 = u_f^{A_0}$. By assumption, $A_0 - A_D \geq cI$ in $D$, so Proposition~\ref{prop:mono} yields
	\begin{align*}
		\inner{f,\bigl[ \Lambda(A_D) - \Lambda(A_0) - \DLambdah^-_{B} \bigr]f} &\geq -c\int_B \abs{\nabla u_0}^2\,\di x + \int_\Omega (A_0-A_D)\nabla u_0\cdot\overline{\nabla u_0}\,\di x \\
		&\geq c\int_{D\setminus B} \abs{\nabla u_0}^2\,\di x \geq 0.
	\end{align*}
	
	Next, assume $B \not\subset D^\bullet$. Since $D^\bullet$ is closed and has connected complement, while $B$ is a non-empty open set, there exist a relatively open and connected set $U \subseteq \overline{\Omega}\setminus D^\bullet$ intersecting $\Gamma$ and~$B$, and an open ball $\widehat{B}\subset U\cap B$. From Theorem~\ref{thm:locpot} there is a sequence $(f_j)$ in $L^2_\diamond(\Gamma)$ such that $u_j = u_{f_j}^{A_0}$ satisfy
	\begin{equation*}
		\lim_{j\to\infty} \int_{\widehat{B}} \abs{\nabla u_j}^2\,\di x = \infty \qquad \text{and} \qquad \lim_{j\to\infty}\int_{\Omega\setminus U} \abs{\nabla u_j}^2\,\di x = 0.
	\end{equation*}
	Since $D\subset \Omega\setminus U$, Proposition~\ref{prop:mono} gives
	\begin{align*}
		\inner{f_j,\bigl[ \Lambda(A_D) - \Lambda(A_0) - \DLambdah^-_{B} \bigr]f_j} &\leq -c\int_B \abs{\nabla u_j}^2\,\di x + \int_\Omega A_0A_D^{-1}(A_0-A_D)\nabla u_j\cdot\overline{\nabla u_j}\,\di x \\
		&\leq -c\int_{\widehat{B}} \abs{\nabla u_j}^2\,\di x + \norm{A_0A_D^{-1}(A_0-A_D)}_* \int_{\Omega\setminus U} \abs{\nabla u_j}^2\,\di x.
	\end{align*}
	We conclude that
	\begin{equation*}
		\lim_{j\to\infty}\inner{f_j,\bigl[ \Lambda(A_D) - \Lambda(A_0) - \DLambdah^-_{B} \bigr]f_j} = -\infty,
	\end{equation*}
	in particular, $\DLambdah^-_{B} \not\leq \Lambda(A_D) - \Lambda(A_0)$. This proves the contrapositive formulation of the assertion
	\begin{equation*}
		\DLambdah^-_{B} \leq \Lambda(A_D) - \Lambda(A_0) \quad \text{\emph{implies}} \quad B \subset D^\bullet. \qedhere
	\end{equation*}
	
	\section{The forward problem with extreme coefficients} \label{sec:forwardextreme}
	
	We will now consider extreme inclusions, meaning that part of the domain $C_0$ is occupied by a perfectly insulating conductivity (vanishing conductivity) and part of the domain $C_\infty$ is occupied by a perfectly conducting conductivity (infinite conductivity). The extreme parts are inherently isotropic. We may formally denote such a coefficient as
	\begin{equation} \label{eq:extremecoeff}
		A = \begin{dcases}
			A_\textup{F} & \text{in } \Omega\setminus C \\
			0I & \text{in } C_0 \\
			\infty I & \text{in } C_\infty,
		\end{dcases}
	\end{equation}
	where $C = C_0 \cup C_\infty$ and where $A_\textup{F}\in\mathcal{H}(\Omega)$ refers to the finite/non-extreme part of the coefficient. For notational convenience, since we later define extensions into $C_0$ and an inner product related to $\Omega\setminus C_0$, we will always consider $A_\textup{F}$ to be defined in all of $\Omega$. We apply the following assumption in connection with such extreme inclusions.
	\begin{assumption} \label{assump:Csets}
		Let $C_0,C_\infty\Subset\Omega$ satisfy:
		\begin{itemize}
			\item $C_0$ and $C_\infty$ are closures of open sets with Lipschitz boundaries.
			\item $C_0\cap C_\infty = \emptyset$.
			\item $\Omega\setminus C_0$ is connected.
		\end{itemize}
	\end{assumption}
	It is worth noting that $C_0$ and $C_\infty$ are allowed to comprise several connected components, and are also allowed to be empty sets. These extreme parts can be introduced via certain boundary conditions:
	\begin{align*}
		-\nabla \cdot(A_\textup{F}\nabla u) &= 0 \text{ in } \Omega\setminus C \\
		\nu\cdot (A_\textup{F}\nabla u) &= \begin{dcases}
			f & \text{on } \Gamma \\
			0 & \text{on } \partial(\Omega\setminus C_0)\setminus \Gamma
		\end{dcases} \\
		\nabla u &= 0 \text{ in } C_\infty^\circ \\
		\int_{\partial C_i} \nu\cdot(A_\textup{F}\nabla u)\,\di S &= 0 \text{ for each component $C_i$ of $C_\infty$}.
	\end{align*}
	In the last condition the trace is taken from the exterior of $C_\infty$. It is worth noting here, that $u$ is only defined in $\Omega\setminus C_0$ from this PDE problem. A consistent choice of extending $u$ into $C_0$, which we will use throughout this paper, is the $H^1$-extension $Eu$ of $u$ defined below.
	\begin{definition} \label{def:extension}
		Define $E \colon H^1(\Omega\setminus C_0) \to H^1(\Omega)$ as 
		\begin{equation*}
			Ew = \begin{dcases}
				w &\text{in } \Omega\setminus C_0 \\
				\widetilde{w} &\text{in } C_0,
			\end{dcases}
		\end{equation*} 
		with $\widetilde{w}$ being the unique solution in $H^1(C_0^\circ)$ of the Dirichlet problem
		\begin{align*}
			-\nabla \cdot(A_\textup{F}\nabla \widetilde{w}) &= 0 \text{ in } C_0^\circ \\
			\widetilde{w} &= w \text{ on } \partial C_0.
		\end{align*}
	\end{definition}
	It is clear that the extension $E$ depends on $C_0$ and $A_\textup{F}$, however to avoid introducing excessive notation, whenever we write $Eu$ it will be implicitly understood that we use the extension related to the $C_0$ and $A_\textup{F}$ from the definition of the specific electric potential $u$.
	
	Next we consider the weak formulation in connection with extreme inclusions, and for this purpose we introduce the space
	\begin{equation*}
		\mathscr{H}_{C_0}^{C_\infty} = \{\, v\in H_\diamond^1(\Omega\setminus C_0) \mid \nabla v = 0 \text{ in } C_\infty^\circ \,\}.
	\end{equation*}
	The weak problem now becomes
	\begin{equation} \label{eq:weakextreme}
		\int_{\Omega\setminus C_0} A_\textup{F}\nabla u\cdot\overline{\nabla v}\,\di x = \inner{f,v}
	\end{equation}
	for all $v\in \mathscr{H}_{C_0}^{C_\infty}$. Once again, the Lax--Milgram lemma ensures the existence of a unique solution $u = u_f^A\in\mathscr{H}_{C_0}^{C_\infty}$, and we have the local ND map $\Lambda(A) \colon f \mapsto u_f^A|_{\Gamma}$. 
	
	From \eqref{eq:weakextreme}, for any $f,g\in L^2_\diamond(\Gamma)$ and for any two pairs of extreme coefficients $A_1$ and $A_2$, with extreme inclusions $C = C_0\cup C_\infty$ for $A_1$ and where the perfectly conducting parts of $A_2$ are a subset of $C_\infty$, we have:
	\begin{equation} \label{eq:lambdaweakextreme}
		\inner{f,\Lambda(A_1)g} = \int_{\Omega\setminus C} A_2\nabla u_f^{A_2}\cdot \overline{\nabla Eu_g^{A_1}}\,\di x.
	\end{equation}
	In particular
	\begin{equation} \label{eq:lambdaweakextreme2}
		\inner{f,\Lambda(A)g} = \int_{\Omega\setminus C} A_\textup{F}\nabla u_f^{A}\cdot \overline{\nabla u_g^{A}}\,\di x = \int_{\Omega\setminus C_0} A_\textup{F}\nabla u_f^{A}\cdot \overline{\nabla u_g^{A}}\,\di x,
	\end{equation}
	for which $\Lambda(A)$ is self-adjoint if $A_\textup{F}\in \Hsa(\Omega)$.
	
	We moreover note that when $A_\textup{F}\in\Hsa(\Omega)$, then
	\begin{equation} \label{eq:innerA}
		\inner{w,v}_A = \int_{\Omega\setminus C_0} A_\textup{F}\nabla w \cdot\overline{\nabla v}\,\di x
	\end{equation}
	becomes an inner product on $\mathscr{H}_{C_0}^{\emptyset} \supseteq \mathscr{H}_{C_0}^{C_\infty}$, whose norm $\norm{\,\cdot\,}_A$ is equivalent with the norm on $H^1_\diamond(\Omega\setminus C_0)$. We will often make use of the norm equivalences between $\norm{\,\cdot\,}_{H^1(\Omega\setminus C_0)}$, $\norm{\nabla(\cdot)}_{L^2(\Omega\setminus C_0)^d}$, and $\norm{\,\cdot\,}_A$ that hold on $\mathscr{H}_{C_0}^{\emptyset}$.
	
	\section{Reconstruction of general anisotropic and extreme inclusions} \label{sec:mainextreme}
	
	We now give the main result on reconstructing anisotropic inclusions also including extreme inclusions. We moreover avoid requiring conductivity bounds such as those needed for Theorem~\ref{thm:main}. The main disadvantages, however, are twofold:
	\begin{itemize}
		\item We assume Lipschitz regularity of the outer inclusion boundaries, which is unlike in Theorems~\ref{thm:main} and~\ref{thm:mainlinear} where no explicit boundary regularity is needed for the inclusions.
		\item For numerical implementation, a linearized version of the method such as Theorem~\ref{thm:mainlinear} is very beneficial. For general inclusions this cannot be obtained in the extreme case. However, in the less general \emph{inner approach}, one can still obtain linearized versions of the method for either perfectly insulating \emph{or} perfectly conducting inclusions (Theorem~\ref{thm:extremelinear}).
	\end{itemize}
	We will assume to have a known (non-extreme) background conductivity $A_0\in \Hsa(\Omega)$ and an unknown conductivity $A_D$ of the form
	\begin{equation*}
		A_D = \begin{dcases}
			A_{\textup{F}} & \text{in } \Omega\setminus (D_0\cup D_\infty) \\
			0I & \text{in } D_0 \\
			\infty I & \text{in } D_\infty,
		\end{dcases}
	\end{equation*}
	where $D_0$ and $D_\infty$ satisfy Assumption~\ref{assump:Csets} (in place of $C_0$ and $C_\infty$), and with the finite part $A_\textup{F} \in \Hsa(\Omega)$. 
	
	Once again, we define the inclusions as
	\begin{equation*}
		D = \suppm(A_D - A_0),
	\end{equation*}
	and proceed to determine the outer shape $D^\bullet$. We denote by
	\begin{equation*}
		D_\textup{F} = D\setminus(D_0\cup D_\infty)
	\end{equation*}
	the part of the domain occupied by non-extreme deviations from $A_0$.
	
	We define the class of admissible test-inclusions, which in the extreme case also have Lipschitz regular boundary:
	\begin{align*}
		\widehat{\mathcal{A}} &= \{\, C \Subset \Omega \mid C \text{ is the closure of an open set,}  \\
		&\hphantom{{}= \{C \subset \Omega \mid{}\,}\text{has connected complement,} \\
		&\hphantom{{}= \{C \subset \Omega \mid{}\,}\text{and has Lipschitz boundary } \partial C \,\}.
	\end{align*}
	\begin{assumption}  \label{assump:reconextreme} \needspace{2\baselineskip}  {}\
		\begin{enumerate}[\rm(i)]
			\item Assume that $D\Subset \Omega$ and is the closure of an open set, and that $\partial D^\bullet$ is Lipschitz regular.
			\item For every $x\in\partial D^\bullet$ and every open neighborhood $W$ of $x$, assume there exists a relatively open connected set $V\subset D^\bullet\cap W$ that intersects $\partial D^\bullet$, satisfying either that $V\subseteq D_0$, $V\subseteq D_\infty$, or $V\subseteq D_\textup{F}$. In the latter case, $V$ can be picked such that either of two options hold:
			\begin{enumerate}[\rm(a)]
				\item $A_\textup{F} - A_0$ is positive semidefinite in $V$, and there exists an open ball $B\subset V$ on which $A_\textup{F} - A_0$ is uniformly positive definite.
				\item $A_\textup{F} - A_0$ is negative semidefinite in $V$, and there exists an open ball $B\subset V$ on which $A_\textup{F} - A_0$ is uniformly negative definite.
			\end{enumerate}
			\item Assume that $A_0$ satisfies the UCP.
		\end{enumerate}
	\end{assumption}
	
	For $C\in\widehat{\mathcal{A}}$, we define the test-coefficients
	\begin{equation*}
		A_C^{\emptyset} = \begin{dcases}
			0 I & \text{in } C \\
			A_0 & \text{in } \Omega\setminus C
		\end{dcases}
		\quad 
		\text{and}
		\quad
		A_{\emptyset}^C = \begin{dcases}
			\infty I & \text{in } C \\
			A_0 & \text{in } \Omega\setminus C,
		\end{dcases}
	\end{equation*}
	and the corresponding test-operators
	\begin{equation*}
		\Lambda_C^{\emptyset} = \Lambda(A_C^{\emptyset}) \quad \text{and}\quad \Lambda_{\emptyset}^C = \Lambda(A_{\emptyset}^C).
	\end{equation*}
	\begin{theorem} \label{thm:mainextreme}
		For any $C\in\widehat{\mathcal{A}}$ we have
		\begin{equation*}
			D \subseteq C \quad \text{implies} \quad \Lambda_C^{\emptyset} \geq \Lambda(A_D) \geq \Lambda_{\emptyset}^C.
		\end{equation*}
		Under Assumption~\ref{assump:reconextreme}, for any $C\in\widehat{\mathcal{A}}$ we have
		\begin{equation*}
			\Lambda_C^{\emptyset} \geq \Lambda(A_D) \geq \Lambda_{\emptyset}^C \quad \text{implies} \quad D \subseteq C.
		\end{equation*}
	\end{theorem}
	\begin{proof}
		The proof is given in Section~\ref{sec:mainproofextreme}.
	\end{proof}
	
	\begin{remark}
		Under Assumptions~\ref{assump:reconextreme}, Theorem~\ref{thm:mainextreme} gives
		\begin{equation*}
			D^\bullet = \cap\, \{\, C\in\widehat{\mathcal{A}} \mid \Lambda_C^{\emptyset} \geq \Lambda(A_D) \geq \Lambda_{\emptyset}^C \,\}.
		\end{equation*}
		Moreover, from the proof we also conclude:
		\begin{itemize}
			\item If $D_0 = \emptyset$ or is located away from $\partial D^\bullet$, and in Assumption~\ref{assump:reconextreme}(ii) there is only positive definiteness near $\partial D^\bullet$, we only need to check the inequality $\Lambda(A_D)\geq \Lambda_{\emptyset}^C$.
			\item If $D_\infty = \emptyset$ or is located away from $\partial D^\bullet$, and in Assumption~\ref{assump:reconextreme}(ii) there is only negative definiteness near $\partial D^\bullet$, we only need to check the inequality $\Lambda_C^{\emptyset} \geq \Lambda(A_D)$.
		\end{itemize} 
	\end{remark}
	
	 We also state the following two results for the inner approach of the monotonicity method, to \emph{either} determine perfectly insulating \emph{or} perfectly conducting inclusions. We do not give proofs of these results, as they are relatively short adaptations of the proofs of \cite[theorems 3.4 and 3.5]{Garde2020}, using the tools introduced in the present paper for the extreme anisotropic conductivities.
	 
	 To shorten the notation, we introduce the test-operators:
	 \begin{align*}
	 	\Lambda_{c,B} &= \Lambda(A_0 + c\chi_B I), \\
	 	\DLambda_{c,B} &= \DLambda(A_0;c\chi_B I),
	 \end{align*}
	 where $\chi_B$ is a characteristic function on the set $B$.
	
	\begin{theorem} \label{thm:extremelinear}
		Assume $D\in\widehat{\mathcal{A}}$, $A_0$ satisfies the UCP, and let $A_0 \geq c I$ in the Loewner order in $\Omega$ for some $c>0$. For any open set $B\subseteq \Omega$ we have
		\begin{align*}
			B \subset D \quad &\text{if and only if} \quad \DLambda_{-c,B}  \leq \Lambda_D^{\emptyset} - \Lambda(A_0) \\
			&\text{if and only if} \quad \Lambda_{\emptyset}^D - \Lambda(A_0) \leq \DLambda_{c,B}.
		\end{align*}
	\end{theorem}
	
	\begin{theorem} \label{thm:extreme2}
		Assume $D\in\widehat{\mathcal{A}}$, $A_0$ satisfies the UCP, and let $A_0 \geq \alpha I$ in the Loewner order in $\Omega$ for some $\alpha>0$. For any open set $B\subseteq \Omega$ we have
		\begin{align*}
			\text{If $0<c<\alpha$, then} \quad B \subset D \quad &\text{if and only if} \quad \Lambda_{-c,B} \leq \Lambda_{D}^{\emptyset}.  \\
			\text{If $c>0$, then} \quad B \subset D \quad &\text{if and only if} \quad \Lambda_{\emptyset}^D \leq \Lambda_{c,B}.
		\end{align*}
	\end{theorem}
	
	\section{Extreme measurements as a limit} \label{sec:extremelimit}
	
	Throughout this section, $C_0$ and $C_\infty$ satisfy Assumption~\ref{assump:Csets} and $C = C_0\cup C_\infty$. Moreover, $A$ is given by \eqref{eq:extremecoeff} with $A_\textup{F}\in \Hsa(\Omega)$.
	
	We define $P$ as the orthogonal projection of $\mathscr{H}_{C_0}^{\emptyset}$ onto $\mathscr{H}_{C_0}^{C_\infty}$, and define $P^\perp$ as the orthogonal projection of $\mathscr{H}_{C_0}^{\emptyset}$ onto $(\mathscr{H}_{C_0}^{C_\infty})^\perp$, both with respect to the inner product $\inner{\,\cdot\,,\,\cdot\,}_A$.
	\begin{lemma} \label{lemma:extremecharacterization}
		Let $f\in L^2_\diamond(\Gamma)$ and let $u_f^{A_\textup{F}}$ satisfy \eqref{eq:weak} with finite conductivity $A_\textup{F}$. Then the solution $u_f^A$ to \eqref{eq:weakextreme} is characterized as
		\begin{equation*}
			u_f^A = P(u_f^{A_\textup{F}}|_{\Omega\setminus C_0} - w),
		\end{equation*}
		for $w\in H_\diamond^1(\Omega\setminus C_0)$ solving
		\begin{align*}
			-\nabla\cdot(A_\textup{F}\nabla w) &= 0 \textup{ in } \Omega\setminus C_0 \\
			\nu\cdot(A_\textup{F}\nabla w) &= \nu\cdot(A_\textup{F}\nabla u_f^{A_\textup{F}}) \textup{ on } \partial C_0 \\
			\nu\cdot(A_\textup{F}\nabla w) &= 0 \textup{ on } \partial \Omega.
		\end{align*}
	\end{lemma}
	\begin{proof}
		It is evident that $\widehat{u} = u_f^{A_\textup{F}}|_{\Omega\setminus C_0} - w$ satisfies the corresponding conductivity equation in $\Omega\setminus C_0$, but with vanishing Neumann condition on $\partial C_0$ and Neumann condition $f$ on $\partial\Omega$. This corresponds to having the perfectly insulating inclusion in $C_0$, but still missing the perfectly conducting inclusion in $C_\infty$. 
		
		Since $P$ is self-adjoint, the weak form for $\widehat{u}$ implies
		\begin{eqnarray*}
			\inner{f,v} = \inner{\widehat{u},v}_A = \inner{\widehat{u},Pv}_A = \inner{P\widehat{u},v}_A
		\end{eqnarray*}
		for all $v\in\mathscr{H}_{C_0}^{C_\infty} \subseteq \mathscr{H}_{C_0}^{\emptyset}$. Hence $u_f^A = P\widehat{u}$ is the unique solution to the weak problem \eqref{eq:weakextreme}.
	\end{proof}
	
	\begin{lemma} \label{lemma:projest}
		Let $v\in\mathscr{H}_{C_0}^{\emptyset}$ satisfy $\nabla\cdot(A_\textup{F}\nabla v) = 0$ in $\Omega\setminus C$. There exists $K>0$ (independent of $v$) such that
		\begin{equation*}
			\norm{P^\perp v}_A \leq K\norm{\nabla v}_{L^2(C_\infty)^d}.
		\end{equation*}
	\end{lemma}
	\begin{proof}
		We let $\widehat{v}$ be the piecewise constant function in $C_\infty$, which in each component $C_i$ of $C_\infty$ equals the mean value of $v|_{C_i}$. Moreover, let $w\in\mathscr{H}_{C_0}^{C_\infty}$ solve the Dirichlet problem
		\begin{align*}
			-\nabla\cdot(A_\textup{F}\nabla w) &= 0 \text{ in } \Omega\setminus C \\
			w &= v \text{ on } \partial(\Omega\setminus C_0) \\
			w &= \widehat{v} \text{ in } C_\infty^\circ.
		\end{align*}
		Evidently $v-w$ satisfies a similar PDE problem in $\Omega\setminus C$ whose only non-trivial Dirichlet condition is on $\partial C_\infty$. We may now estimate $v-w$ using the continuous dependence on the Dirichlet condition, the trace theorem in $C_\infty^\circ$, and the Poincar\'e inequality applied in each component $C_i$:
		\begin{align*}
			\norm{v-w}_{H^1(\Omega\setminus C_0)}^2 &= \norm{v-w}_{H^1(\Omega\setminus C)}^2 + \norm{v-\widehat{v}}_{H^1(C_\infty^\circ)}^2 \\
			&\leq K\bigl(\norm{v-w}_{H^{1/2}(\partial C_\infty)}^2 + \norm{v-\widehat{v}}_{H^1(C_\infty^\circ)}^2\bigr) \\
			&\leq K\norm{v-\widehat{v}}_{H^1(C_\infty^\circ)}^2 \\
			&\leq K\norm{\nabla v}_{L^2(C_\infty)^d}^2.
		\end{align*}
		The proof is concluded using the projection theorem and norm equivalences:
		\begin{equation*}
			\norm{P^\perp v}_A = \inf_{\widetilde{v}\in\mathscr{H}_{C_0}^{C_\infty}}\norm{v-\tilde{v}}_A \leq \norm{v-w}_A \leq K\norm{\nabla v}_{L^2(C_\infty)^d}. \qedhere
		\end{equation*}
	\end{proof}
	
	These lemmas establish a convergence result for obtaining electric potentials and local ND maps for extreme coefficients as limits, based on a sequence of truncated coefficients. Recall Definition~\ref{def:extension} for the extension operator $E$.
	
	\begin{theorem} \label{thm:conv} \needspace{4\baselineskip} {}\
		Let $\epsilon > 0$ and consider the truncated conductivity coefficient
		\begin{equation*}
			A_\epsilon = \begin{dcases}
				A_\textup{F} & \text{in } \Omega\setminus C \\
				\epsilon A_\textup{F} & \text{in } C_0 \\
				\epsilon^{-1} A_\textup{F} & \text{in } C_\infty.
			\end{dcases}
		\end{equation*}
		There exists $K>0$ (only depending on $\Omega$, $C_0$, $C_\infty$, and bounds on $A_\textup{F}$) such that
		\begin{equation*}
			\norm{Eu_f^A - u_f^{A_\epsilon}}_{H^1(\Omega)} \leq K\epsilon^{1/2}\norm{f},
		\end{equation*}
		and consequently
		\begin{equation*}
			\norm{\Lambda(A) - \Lambda(A_\epsilon)}_{\mathscr{L}(L^2_\diamond(\Gamma))} \leq K\epsilon^{1/2}.
		\end{equation*}
	\end{theorem}
	
	\begin{remark} \label{remark:conv}
		It is straightforward to modify the proof of Theorem~\ref{thm:conv} to obtain the same estimates in case the coefficient is only partially truncated, i.e.\ for
		\begin{equation*}
			A_\epsilon = \begin{dcases}
				A_\textup{F} & \text{in } \Omega\setminus C \\
				\epsilon A_\textup{F} & \text{in } C_0 \\
				\infty I & \text{in } C_\infty
			\end{dcases}
			\quad \text{or} \quad 
			A_\epsilon = \begin{dcases}
				A_\textup{F} & \text{in } \Omega\setminus C \\
				0I & \text{in } C_0 \\
				\epsilon^{-1} A_\textup{F} & \text{in } C_\infty.
			\end{dcases}
		\end{equation*} 
	\end{remark}
	
	\begin{proof}
		To ease the presentation let $u_\epsilon = u_f^{A_\epsilon}$ and $u = u_f^A$, and let $c>0$ such that $A_\textup{F} \geq cI$ in the Loewner order in $\Omega$. We estimate $u_\epsilon$ in $C_0$ and $C_\infty$ by using the ``Cauchy inequality with $\epsilon$'' (here with $\alpha>0$ taking the role of the $\epsilon$ in the inequality) combined with the trace theorem in $\Omega\setminus C$:
		\begin{align*}
			c\bigl( \norm{\nabla u_\epsilon}_{L^2(\Omega\setminus C)}^2 + \epsilon\norm{\nabla u_\epsilon}_{L^2(C_0)}^2 + \epsilon^{-1}\norm{\nabla u_\epsilon}_{L^2(C_\infty)}^2 \bigr) &\leq \int_\Omega A_\epsilon\nabla u_\epsilon\cdot\overline{\nabla u_\epsilon}\,\di x \\
			&= \inner{f,u_\epsilon} \\
			&\leq \alpha\norm{f}^2+\frac{1}{4\alpha}\norm{u_\epsilon}_{H^{1/2}(\partial(\Omega\setminus C))}^2 \\
			&\leq \alpha\norm{f}^2 + \frac{\widehat{K}}{4\alpha}\norm{\nabla u_\epsilon}_{L^2(\Omega\setminus C)^d}^2.
		\end{align*} 
		Choosing $\alpha = \frac{\widehat{K}}{4c}$ results in
		\begin{equation*}
			\epsilon\norm{\nabla u_\epsilon}_{L^2(C_0)^d}^2 + \epsilon^{-1}\norm{\nabla u_\epsilon}_{L^2(C_\infty)^d}^2 \leq \frac{\widehat{K}}{4c^2}\norm{f}^2,
		\end{equation*}
		or rather there is a constant $K>0$ such that
		\begin{align}
			\norm{\nabla u_\epsilon}_{L^2(C_\infty)^d} &\leq K\epsilon^{1/2}\norm{f}, \label{eq:uepsbnd1}\\
			\norm{\nabla u_\epsilon}_{L^2(C_0)^d} & \leq K\epsilon^{-1/2}\norm{f}. \label{eq:uepsbnd2}
		\end{align}
		We will make use of these bounds momentarily. First we consider $u_\epsilon|_{\Omega\setminus C_0} \in \mathscr{H}_{C_0}^{\emptyset}$ and apply the self-adjointness of the projections $P$ and $P^\perp$, and the Cauchy--Schwarz inequality, to estimate $u-u_\epsilon$:
		\begin{align}
			\norm{u-u_\epsilon}_A^2 &= \norm{P(u-u_\epsilon)}_A^2 + \norm{P^\perp(u-u_\epsilon)}_A^2 \notag \\
			&\leq \inner{u-u_\epsilon,P(u-u_\epsilon)}_A + \norm{P^\perp u_\epsilon}_A\norm{u-u_\epsilon}_A. \label{eq:convest}
		\end{align}
		A combination of Lemma~\ref{lemma:projest} and \eqref{eq:uepsbnd1} gives the estimate
		\begin{equation} \label{eq:convterm2}
			\norm{P^\perp u_\epsilon}_A\norm{u-u_\epsilon}_A \leq K\epsilon^{1/2}\norm{f}\norm{u-u_\epsilon}_A.
		\end{equation}
		What remains is to bound the first term in \eqref{eq:convest} in a similar manner.
		
		Since $P(u-u_\epsilon)$ has vanishing gradient in $C_\infty^\circ$, the weak form for $u_\epsilon$ gives
		\begin{equation*}
			\inner{u_\epsilon,P(u-u_\epsilon)}_A = \inner{f,P(u-u_\epsilon)} - \epsilon\int_{C_0} A_\textup{F}\nabla u_\epsilon\cdot\overline{\nabla w}\,\di x,
		\end{equation*}
		where $w$ is an $H^1$-extension of $P(u-u_\epsilon)$ onto $C_0$. Hence from the weak form of $u$, we have
		\begin{equation} \label{eq:convtmp1}
			\inner{u-u_\epsilon,P(u-u_\epsilon)}_A = \epsilon\int_{C_0} A_\textup{F}\nabla u_\epsilon\cdot\overline{\nabla w}\,\di x.
		\end{equation}
		The boundedness of the $H^1$-extension operator and the projection $P$, together with norm equivalences, imply
		\begin{equation*}
			\norm{\nabla w}_{L^2(C_0)^d} \leq \norm{w}_{H^1(\Omega)} \leq K\norm{u-u_\epsilon}_A.
		\end{equation*}
		Combining this with \eqref{eq:convtmp1} and \eqref{eq:uepsbnd2} leads to the estimate
		\begin{equation} \label{eq:convtmp2}
			\inner{u-u_\epsilon,P(u-u_\epsilon)}_A \leq K\epsilon^{1/2}\norm{f}\norm{u-u_\epsilon}_A.
		\end{equation}
		
		Finally, \eqref{eq:convest}, \eqref{eq:convterm2}, and \eqref{eq:convtmp2}, together with norm equivalences, results in
		\begin{equation} \label{eq:convres1}
			\norm{u-u_\epsilon}_{H^1(\Omega\setminus C_0)} \leq K\epsilon^{1/2}\norm{f}.
		\end{equation} 
		We now consider the extension $Eu$ and note that $v = (Eu-u_\epsilon)|_{C_0}$ is a weak solution of
		\begin{align*}
			-\nabla\cdot(A_\textup{F}\nabla v) &= 0 \text{ in } C_0^\circ \\
			v &= u-u_\epsilon \text{ on } \partial C_0.
		\end{align*}
		From the continuous dependence on the Dirichlet condition, the trace theorem in $\Omega\setminus C_0$, and \eqref{eq:convres1} we have
		\begin{equation*}
			\norm{Eu-u_\epsilon}_{H^1(C_0^\circ)} \leq K\norm{u-u_\epsilon}_{H^{1/2}(\partial C_0)} \leq K\norm{u-u_\epsilon}_{H^1(\Omega\setminus C_0)} \leq K\epsilon^{1/2}\norm{f}. 
		\end{equation*}
		Combined with \eqref{eq:convres1}, this proves the first part of the theorem.
		
		The estimate for the local ND maps follows from \eqref{eq:convres1} and the trace theorem in $\Omega\setminus C_0$:
		\begin{equation*}
			\norm{\Lambda(A)-\Lambda(A_\epsilon)}_{\mathscr{L}(L_\diamond^2(\Gamma))} = \sup_{\norm{f}=1}\norm{u-u_\epsilon} \leq K \sup_{\norm{f}=1}\norm{u-u_\epsilon}_{H^1(\Omega\setminus C_0)} \leq K\epsilon^{1/2}. \qedhere
		\end{equation*}
	\end{proof}
	
	\section{Extreme operator inequalities} \label{sec:monoineqextreme}
	
	Throughout this section, $C_0$ and $C_\infty$ satisfy Assumption~\ref{assump:Csets} and $C = C_0\cup C_\infty$. We re-emphasize that $C_0$ and $C_\infty$ are allowed to be empty sets, since this will be particularly relevant when using the following operator inequalities in the proof of Theorem~\ref{thm:mainextreme} in Section~\ref{sec:mainproofextreme}. We also let $f\in L^2_\diamond(\Gamma)$ as usual.
	\begin{proposition} \label{prop:monoextreme1}
	Let $A_1,A_2\in\Hsa(\Omega)$ and for $j\in\{1,2\}$ define
	\begin{equation*}
		\widehat{A}_j = \begin{dcases}
			A_j &\text{in } \Omega\setminus C \\
			0I &\text{in } C_0 \\
			\infty I & \text{in } C_\infty.
		\end{dcases}
	\end{equation*}
	We denote $\Lambda_j = \Lambda(\widehat{A}_j)$ and $u_j = u_f^{\widehat{A}_j}$. Then
	\begin{align}
		\inner{f,(\Lambda_1-\Lambda_2)f} &\geq \int_{\Omega\setminus C} (A_2-A_1)\nabla u_2\cdot \overline{\nabla u_2}\,\di x, \label{eq:monoext11} \\
		\inner{f,(\Lambda_1-\Lambda_2)f} &\leq \int_{\Omega\setminus C} A_2 A_1^{-1}(A_2-A_1)\nabla u_2\cdot\overline{\nabla u_2}\,\di x. \label{eq:monoext12}
	\end{align}
	\end{proposition}
	\begin{proof}
		Using the weak forms, we obtain
		\begin{equation*}
			\inner{u_1,u_2}_{\widehat{A}_1} = \inner{f,\Lambda_2 f} = \norm{u_2}_{\widehat{A}_2}^2.
		\end{equation*}
		Therefore
		\begin{align}
			0 &\leq \norm{u_1-u_2}_{\widehat{A}_1}^2 \notag\\
			&= \norm{u_1}_{\widehat{A}_1}^2 + \norm{u_2}_{\widehat{A}_1}^2 - 2\redel\inner{u_1,u_2}_{\widehat{A}_1} \notag\\
			&= \inner{f,\Lambda_1 f} - \inner{f,\Lambda_2 f} + \norm{u_2}_{\widehat{A}_1}^2 - \norm{u_2}_{\widehat{A}_2}^2, \label{eq:monotmp1}
		\end{align}
		which is just a different way of stating \eqref{eq:monoext11}, since $u_j$ has vanishing gradient in $C_\infty^\circ$.
		
		Next we prove \eqref{eq:monoext12}:
		\begin{align*}
			&\int_{\Omega\setminus C} A_2 A_1^{-1}(A_2-A_1)\nabla u_2\cdot\overline{\nabla u_2}\,\di x \\
			&\hspace{1.5cm}\geq \int_{\Omega\setminus C} A_2 A_1^{-1}(A_2-A_1)\nabla u_2\cdot\overline{\nabla u_2}\,\di x - \int_{\Omega\setminus C} \abs{A_1^{1/2}\nabla u_1 - A_1^{-1/2}A_2\nabla u_2}^2\,\di x \\
			&\hspace{1.5cm}= - \int_{\Omega\setminus C} A_2\nabla u_2\cdot\overline{\nabla u_2}\,\di x -\int_{\Omega\setminus C} A_1\nabla u_1\cdot\overline{\nabla u_1}\,\di x + 2\redel\Bigl(\int_{\Omega\setminus C} A_2\nabla u_2\cdot\overline{\nabla u_1}\,\di x\Bigr)  \\
			&\hspace{1.5cm}= \inner{f,(\Lambda_1 - \Lambda_2)f}. \qedhere
		\end{align*}
	\end{proof}
	
	In the following two results we let $A$ be as in \eqref{eq:extremecoeff} with $A_\textup{F}\in\Hsa(\Omega)$ and with the extreme inclusion $C_0$ and $C_\infty$. We also introduce the short-hand notation $\Lambda_{C_0}^{C_\infty} = \Lambda(A)$ for the local ND map and $u_{C_0}^{C_\infty} = u_f^{A}$ for the electric potential.
	
	\begin{proposition} \label{prop:monoextreme2}
		There exists $K>0$ (independent of $f$) such that
		\begin{align}
			\inner{f,(\Lambda_{C_0}^{\emptyset} - \Lambda_{C_0}^{C_\infty})f} &\geq \int_{C_\infty}A_\textup{F}\nabla u_{C_0}^{\emptyset}\cdot\overline{\nabla u_{C_0}^{\emptyset}}\,\di x \label{eq:monoext21} \\
			\inner{f,(\Lambda_{C_0}^{\emptyset} - \Lambda_{C_0}^{C_\infty})f} &\leq K\int_{C_\infty}\abs{\nabla u_{C_0}^{\emptyset}}^2\,\di x. \label{eq:monoext22}
		\end{align}
	\end{proposition}
	\begin{proof}
		We start by proving \eqref{eq:monoext21}. We define a truncated coefficient (in $C_\infty$), for $\epsilon>0$:
		\begin{equation*}
			A_\epsilon = \begin{dcases}
				A_\textup{F} &\text{in } \Omega\setminus C \\
				0I & \text{in } C_0 \\
				\epsilon^{-1}A_\textup{F} &\text{in } C_\infty.
			\end{dcases}
		\end{equation*}
		Using \eqref{eq:monoext12} from Proposition~\ref{prop:monoextreme1} (here with $\Omega\setminus C$ replaced by $\Omega\setminus C_0$), we have
		\begin{equation*}
			\inner{f,\bigl(\Lambda_{C_0}^{\emptyset}-\Lambda(A_\epsilon)\bigr)f} \geq (1-\epsilon)\int_{C_\infty}A_\textup{F}\nabla u_{C_0}^\emptyset\cdot\overline{\nabla u_{C_0}^{\emptyset}}\,\di x.
		\end{equation*}
		We arrive at \eqref{eq:monoext21} by letting $\epsilon\to 0$ due to Theorem~\ref{thm:conv} (Remark~\ref{remark:conv}).
		
		Next we consider \eqref{eq:monoext22}. Note that the proof of Lemma~\ref{lemma:extremecharacterization} implies that $u_{C_0}^{C_\infty} = Pu_{C_0}^{\emptyset}$, and therefore $P^\perp u_{C_0}^{\emptyset} = u_{C_0}^{\emptyset} - u_{C_0}^{C_\infty}$. Inserting this into the weak form for $u_{C_0}^{\emptyset}$ gives
		\begin{equation*}
			\inner{f,(\Lambda_{C_0}^{\emptyset} - \Lambda_{C_0}^{C_\infty})f} = \inner{f,P^\perp u_{C_0}^{\emptyset}} = \inner{u_{C_0}^{\emptyset},P^\perp u_{C_0}^{\emptyset}}_A = \norm{P^\perp u_{C_0}^{\emptyset}}_A^2.
		\end{equation*}
		Hence, the result follows by applying Lemma~\ref{lemma:projest}.		
	\end{proof}
	
	\begin{proposition} \label{prop:monoextreme3}
		We have the bounds
		\begin{align}
			\inner{f,(\Lambda_{C_0}^{C_\infty} - \Lambda_{\emptyset}^{C_\infty})f} &\geq \int_{C_0}A_\textup{F}\nabla u_{\emptyset}^{C_\infty}\cdot\overline{\nabla u_{\emptyset}^{C_\infty}}\,\di x \label{eq:monoext31} \\
			\inner{f,(\Lambda_{C_0}^{C_\infty} - \Lambda_{\emptyset}^{C_\infty})f} &\leq \int_{C_0}A_\textup{F}\nabla Eu_{C_0}^{C_\infty}\cdot\overline{\nabla Eu_{C_0}^{C_\infty}}\,\di x. \label{eq:monoext32}
		\end{align}
	\end{proposition}
	\begin{proof}
		Once again we start by introducing a truncated coefficient (this time in $C_0$):
		\begin{equation*}
			A_\epsilon = \begin{dcases}
				A_\textup{F} &\text{in } \Omega\setminus C \\
				\epsilon A_\textup{F} & \text{in } C_0 \\
				\infty I &\text{in } C_\infty.
			\end{dcases}
		\end{equation*}
		Using \eqref{eq:monoext11} from Proposition~\ref{prop:monoextreme1} (here with $\Omega\setminus C$ replaced by $\Omega\setminus C_\infty$), we have
		\begin{equation*}
			\inner{f,\bigl(\Lambda(A_\epsilon) - \Lambda_{\emptyset}^{C_\infty}\bigr)f} \geq (1-\epsilon)\int_{C_0}A_\textup{F}\nabla u_{\emptyset}^{C_\infty}\cdot\overline{\nabla u_{\emptyset}^{C_\infty}}\,\di x.
		\end{equation*}
		We arrive at \eqref{eq:monoext31} by letting $\epsilon\to 0$ due to Theorem~\ref{thm:conv} (Remark~\ref{remark:conv}).
		
		Next we consider \eqref{eq:monoext32}. Let $\widehat{A}$ be the coefficient associated with $\Lambda_{\emptyset}^{C_\infty}$, recall the extension operator $E$ (Definition~\ref{def:extension}), and note that
		\begin{equation*}
			\inner{u_\emptyset^{C_\infty},Eu_{C_0}^{C_\infty}}_{\widehat{A}} = \inner{f,\Lambda_{C_0}^{C_\infty}f} = \norm{Eu_{C_0}^{C_\infty}}_{\widehat{A}}^2 - \int_{C_0}A_\textup{F}\nabla Eu_{C_0}^{C_\infty}\cdot\overline{\nabla Eu_{C_0}^{C_\infty}}\,\di x.
		\end{equation*}
		The inequality \eqref{eq:monoext32} is therefore given by the following computation:
		\begin{align*}
			0 &\leq \norm{Eu_{C_0}^{C_\infty}-u_{\emptyset}^{C_\infty}}_{\widehat{A}}^2 \\
			&= \norm{Eu_{C_0}^{C_\infty}}_{\widehat{A}}^2 + \norm{u_{\emptyset}^{C_\infty}}_{\widehat{A}}^2 - 2\redel\inner{u_{\emptyset}^{C_\infty},Eu_{C_0}^{C_\infty}}_{\widehat{A}} \\
			&= \inner{f,(\Lambda_{\emptyset}^{C_\infty} - \Lambda_{C_0}^{C_\infty})f} + \int_{C_0}A_\textup{F}\nabla Eu_{C_0}^{C_\infty}\cdot\overline{\nabla Eu_{C_0}^{C_\infty}}\,\di x. \qedhere
		\end{align*}
	\end{proof}
		
	\section{Localized potentials for extreme coefficients} \label{sec:locextreme}
	
	Throughout this section, $C_0$ and $C_\infty$ satisfy Assumption~\ref{assump:Csets} and $C = C_0\cup C_\infty$. Moreover, $A$ is given by \eqref{eq:extremecoeff} with $A_\textup{F}\in \Hsa(\Omega)$. 
	
	We have previously introduced localized potentials for anisotropic coefficients in Theorem~\ref{thm:locpot}, and how one can transfer the localization between different coefficients in Propostion~\ref{prop:simultloc}, provided the coefficients only differ in the set where the power tends to zero. Below we show that such localization can also be transferred to the case of extreme inclusions, again provided that the extreme inclusions are located in the set where the power tends to zero.
	\begin{proposition} \label{prop:locextreme}
		Let $B$ and $U$ be as in Theorem~\ref{thm:locpot}, and assume for $(f_j)$ in $L^2_\diamond(\Gamma)$ that $u_j = u_{f_j}^{A_\textup{F}}$ satisfy
		\begin{equation*}
			\lim_{j\to\infty} \int_B \abs{\nabla u_j}^2\,\di x = \infty \qquad \text{and} \qquad \lim_{j\to\infty}\int_{\Omega\setminus U} \abs{\nabla u_j}^2\,\di x = 0.
		\end{equation*}
		If $C \subset \Omega\setminus \overline{U}$ then $\widehat{u}_j = Eu_{f_j}^{A}$ also satisfy
		\begin{equation*}
			\lim_{j\to\infty} \int_B \abs{\nabla \widehat{u}_j}^2\,\di x = \infty \qquad \text{and} \qquad \lim_{j\to\infty}\int_{\Omega\setminus U} \abs{\nabla \widehat{u}_j}^2\,\di x = 0.
		\end{equation*}
	\end{proposition}
	\begin{proof}
		Let $v_j = u_j|_{\Omega\setminus C_0} - w_j$ for $w_j$ solving the auxiliary PDE problem from Lemma~\ref{lemma:extremecharacterization} (with $f$ replaced by $f_j$), and consider $\widehat{v}_j = u_{f_j}^A = Pv_j$.
		
		Let $V$ be an open set satisfying $C_0 \subset \Omega\setminus\overline{V} \subseteq \Omega\setminus\overline{U}$, and such that $\partial(\Omega\setminus\overline{V})$ is Lipschitz regular. By continuous dependence of $w_j$ on its Neumann condition and from the weak form of $u_j|_{\Omega\setminus(V\cup C_0)}$, we have the estimate
		\begin{align*}
			\norm{w_j}_{H^1(\Omega\setminus C_0)} &\leq K\norm{\nu\cdot(A_\textup{F}\nabla u_j)}_{H^{-1/2}(\partial C_0)} \\
			&\leq K\norm{\nu\cdot(A_\textup{F}\nabla u_j)}_{H^{-1/2}(\partial (\Omega\setminus(V\cup C_0)))} \\
			&\leq K\norm{\nabla u_j}_{L^2(\Omega\setminus(U\cup C_0))^d} \to 0 \text{ for } j\to\infty.
		\end{align*}
		Consequently, from the limits of $u_j$ and $w_j$, we have
		\begin{equation} \label{eq:conv_vj}
			\lim_{j\to\infty} \int_B \abs{\nabla v_j}^2\,\di x = \infty \qquad \text{and} \qquad \lim_{j\to\infty}\int_{\Omega\setminus (U\cup C_0)} \abs{\nabla v_j}^2\,\di x = 0,
		\end{equation}
		and from Lemma~\ref{lemma:projest} we have
		\begin{equation} \label{eq:conv_vj2}
			\norm{P^\perp v_j}_{A} \leq K\norm{\nabla v_j}_{L^2(C_\infty)^d} \to 0 \text{ for } j \to\infty.
		\end{equation}
		Since $\widehat{v}_j = v_j - P^\perp v_j$, then a combination of \eqref{eq:conv_vj} and \eqref{eq:conv_vj2} gives
		\begin{equation*} 
			\lim_{j\to\infty} \int_B \abs{\nabla \widehat{v}_j}^2\,\di x = \infty \qquad \text{and} \qquad \lim_{j\to\infty}\int_{\Omega\setminus (U\cup C_0)} \abs{\nabla \widehat{v}_j}^2\,\di x = 0.
		\end{equation*}
		We have $\widehat{u}_j = E\widehat{v}_j$, and have thus proven the limiting behavior in $\Omega\setminus C_0$. What remains is to investigate $\widehat{u}_j|_{C_0}$. 
		
		In the following we may assume that $\Omega\setminus(\overline{V}\cup C_0)$ is connected, otherwise a similar estimate can be achieved by working in each component separately that touches $C_0$. Let $c_j$ denote the mean of $\widehat{u}_j$ in $\Omega\setminus(\overline{V}\cup C_0)$, then from the trace theorem and Poincar\'e inequality we have
		\begin{align*}
			\norm{\widehat{u}_j-c_j}_{H^{1/2}(\partial C_0)} &\leq K\norm{\widehat{u}_j-c_j}_{H^1(\Omega\setminus(\overline{V}\cup C_0))} \\
			&\leq K\norm{\nabla\widehat{u}_j}_{L^2(\Omega\setminus(V\cup C_0))^d} \to 0 \text{ for } j\to\infty.
		\end{align*}
		However, as $\widehat{u}_j-c_j$ solves the same PDE as $\widehat{u}_j$ in $C_0$, we conclude from the continuous dependence on the Dirichlet condition that
		\begin{equation*}
			\norm{\nabla \widehat{u}_j}_{L^2(C_0)^d} = \norm{\nabla(\widehat{u}_j-c_j)}_{L^2(C_0)^d} \leq K\norm{\widehat{u}_j-c_j}_{H^{1/2}(\partial C_0)}\to 0 \text{ for } j\to\infty. \qedhere
		\end{equation*}
	\end{proof}
	
	\section{Proof of Theorem \ref{thm:mainextreme}} \label{sec:mainproofextreme}
	
	Recall that $A_0\in \Hsa(\Omega)$ and the unknown conductivity $A_D$ is of the form
	\begin{equation*}
		A_D = \begin{dcases}
			A_{\textup{F}} & \text{in } \Omega\setminus (D_0\cup D_\infty) \\
			0I & \text{in } D_0 \\
			\infty I & \text{in } D_\infty,
		\end{dcases}
	\end{equation*}
	where $D_0$ and $D_\infty$ satisfy Assumption~\ref{assump:Csets} (in place of $C_0$ and $C_\infty$), and with the finite part $A_\textup{F} \in \Hsa(\Omega)$. The inclusions are
	\begin{equation*}
		D = \suppm(A_D - A_0),
	\end{equation*}
	and recall that 
	\begin{equation*}
		D_\textup{F} = D\setminus(D_0\cup D_\infty)
	\end{equation*}
	is the part of the domain occupied by non-extreme deviations from $A_0$.
	
	\subsection*{Proof of ``$\boldsymbol{D\subseteq C \Rightarrow \Lambda_{C}^{\emptyset} \geq \Lambda(A_D) \geq \Lambda_{\emptyset}^{C}}$''}
	
	We truncate $A_D$ for an $\epsilon>0$ as
	\begin{equation*}
		A_\epsilon = \begin{dcases}
			A_{\textup{F}} & \text{in } \Omega\setminus (D_0\cup D_\infty) \\
			\epsilon A_\textup{F} & \text{in } D_0 \\
			\epsilon^{-1} A_\textup{F} & \text{in } D_\infty,
		\end{dcases}
	\end{equation*}
	and similarly truncate the test-coefficients
	\begin{equation*}
		\widehat{A}_\epsilon = \begin{dcases}
			A_\textup{F} & \text{in } \Omega\setminus C \\
			\epsilon A_\textup{F} & \text{in } C
		\end{dcases}
		\qquad \text{and} \qquad
		\widetilde{A}_\epsilon = \begin{dcases}
			A_\textup{F} & \text{in } \Omega\setminus C \\
			\epsilon^{-1} A_\textup{F} & \text{in } C.
		\end{dcases}
	\end{equation*}
	For $D\subseteq C$ and $\epsilon\in(0,1]$ we have $\widehat{A}_\epsilon \leq A_\epsilon \leq \widetilde{A}_\epsilon$, which by \eqref{eq:simplemono} implies $\Lambda(\widehat{A}_\epsilon) \geq \Lambda(A_\epsilon) \geq \Lambda(\widetilde{A}_\epsilon)$. Letting $\epsilon\to 0$, the operator norm convergence from Theorem~\ref{thm:conv} gives
	\begin{equation*}
		\Lambda_{C}^{\emptyset} \geq \Lambda(A_D) \geq \Lambda_{\emptyset}^{C}.
	\end{equation*}
	
	\subsection*{Proof of ``$\boldsymbol{\Lambda_{C}^{\emptyset} \geq \Lambda(A_D) \geq \Lambda_{\emptyset}^{C} \Rightarrow D\subseteq C}$''}
	
	Now assume that $D \not\subseteq C$, i.e.~that $D^\bullet\not\subseteq C$. Since both sets are closures of opens sets and have connected complements, there is a relatively open set that connects $D^\bullet\setminus C$ with $\Gamma$ that we may use for localization. Specifically, there exist a relatively open connected set $U\subset \overline{\Omega}\setminus C$ that intersects $\Gamma$ and an open ball $B\subset U \cap D$. Moreover, Assumption~\ref{assump:reconextreme}(ii) ensures that we can pick $B$ and $U$ such that we arrive at one of four cases:
	\begin{itemize}
		\item Case 1: $U\cap D \subseteq D_\infty$.
		\item Case 2: $U\cap D \subseteq D_\textup{F}$, and $A_\textup{F} - A_0$ is positive semidefinite in $U$ and uniformly positive definite in $B$.
		\item Case 3: $U\cap D \subseteq D_0$.
		\item Case 4: $U\cap D \subseteq D_\textup{F}$, and $A_\textup{F} - A_0$ is negative semidefinite in $U$ and uniformly negative definite in $B$.
	\end{itemize}
	In the first two cases we will prove that $\Lambda(A_D) \not\geq \Lambda_{\emptyset}^{C}$, and in the last two case we will prove that $\Lambda_{C}^{\emptyset} \not\geq \Lambda(A_D)$. Together the four cases prove the contrapositive formulation of the assertion 
	\begin{equation*}
		\Lambda_{C}^{\emptyset} \geq \Lambda(A_D) \geq \Lambda_{\emptyset}^{C} \quad \text{\emph{implies}} \quad D\subseteq C.
	\end{equation*}
	
	In the proofs of the four cases we will have several comparisons of coefficients, so we use the convention that
	\begin{equation*}
		A_\textup{F} = A_0 \text{ in } D_0 \cup D_\infty
	\end{equation*}
	to simplify the presentation (since the values of $A_\textup{F}$ in $D_0$ and $D_\infty$ are not specified from the definition of $A_D$). We will also make use of the bounds 
	\begin{equation*}
		\alpha I \leq A_0 \leq \beta I \quad \text{and} \quad \alpha I \leq A_\textup{F} \leq \beta I
	\end{equation*}
	in the Loewner order in $\Omega$, for suitable $\alpha,\beta>0$.
	
	\subsection*{Case 1}
	
	We define two auxiliary coefficients:
	\begin{equation*}
		A_1 = A_{\textup{F}}
		\qquad \text{and} \qquad 
		A_2 = \begin{dcases}
			A_{\textup{F}} & \text{in } \Omega\setminus D_0 \\
			0 I & \text{in } D_0.
		\end{dcases}
	\end{equation*}
	We shorten the notation to $\Lambda = \Lambda(A_D)$ and $\Lambda_k = \Lambda(A_k)$ for $k\in\{0,1,2\}$. From Theorem~\ref{thm:locpot} and Propositions~\ref{prop:simultloc} and~\ref{prop:locextreme}, we may pick a sequence $(f_j)$ in $L^2_\diamond(\Gamma)$, such that $u_j = u_{f_j}^{A_0}$, $\widehat{u}_j = u_{f_j}^{A_1}$, and $\widetilde{u}_j = Eu_{f_j}^{A_2}$ satisfy
	\begin{equation*}
		\lim_{j\to\infty} \int_B \abs{\nabla u_j}^2\,\di x = \lim_{j\to\infty} \int_B \abs{\nabla \widehat{u}_j}^2\,\di x = \lim_{j\to\infty} \int_B \abs{\nabla \widetilde{u}_j}^2\,\di x = \infty
	\end{equation*}
	and
	\begin{equation*}
		\lim_{j\to\infty} \int_{\Omega\setminus U} \abs{\nabla u_j}^2\,\di x = \lim_{j\to\infty} \int_{\Omega\setminus U} \abs{\nabla \widehat{u}_j}^2\,\di x = \lim_{j\to\infty} \int_{\Omega\setminus U} \abs{\nabla \widetilde{u}_j}^2\,\di x = 0.
	\end{equation*}
	Next, we write
	\begin{equation} \label{eq:case1diff}
		\Lambda - \Lambda_{\emptyset}^C = (\Lambda - \Lambda_2) + (\Lambda_2 - \Lambda_1) + (\Lambda_1 - \Lambda_0) + (\Lambda_0 - \Lambda_{\emptyset}^C),
	\end{equation}
	and estimate each of the differences of the local ND maps. From Propositions~\ref{prop:monoextreme1}--\ref{prop:monoextreme3} we have:
	\begin{align*}
		\inner{f_j,(\Lambda-\Lambda_2)f_j} &\leq -\int_{D_\infty} A_0 \nabla \widetilde{u}_j\cdot\overline{\nabla \widetilde{u}_j}\,\di x \leq -\alpha\int_{B} \abs{\nabla \widetilde{u}_j}^2\,\di x, \\
		\inner{f_j,(\Lambda_2-\Lambda_1)f_j} &\leq \int_{D_0} A_0 \nabla \widetilde{u}_j\cdot\overline{\nabla \widetilde{u}_j}\,\di x \leq \beta\int_{\Omega\setminus U} \abs{\nabla \widetilde{u}_j}^2\,\di x, \\
		\inner{f_j,(\Lambda_1-\Lambda_0)f_j} &\leq \int_{D_\textup{F}} (A_0-A_\textup{F})\nabla \widehat{u}_j\cdot\overline{\nabla \widehat{u}_j}\,\di x \leq \norm{A_0-A_\textup{F}}_*\int_{\Omega\setminus U} \abs{\nabla \widehat{u}_j}^2\,\di x, \\
		\inner{f_j,(\Lambda_0-\Lambda_{\emptyset}^C)f_j} &\leq K\int_{C} \abs{\nabla u_j}^2 \,\di x \leq K\int_{\Omega\setminus U} \abs{\nabla u_j}^2 \,\di x.
	\end{align*}
	Hence, \eqref{eq:case1diff} gives
	\begin{equation*}
		\lim_{j\to\infty} \inner{f_j,\bigl(\Lambda(A_D) - \Lambda_{\emptyset}^C\bigr)f_j} = -\infty,
	\end{equation*}
	so we conclude that $\Lambda(A_D) \not\geq \Lambda_{\emptyset}^C$.
	
	\subsection*{Case 2}
	
	We define two auxiliary coefficients:
	\begin{equation*}
		A_1 = \begin{dcases}
			A_0 & \text{in } \Omega\setminus D_0 \\
			0 I & \text{in } D_0
		\end{dcases} 
		\qquad \text{and} \qquad 
		A_2 = \begin{dcases}
			A_0 & \text{in } \Omega\setminus (D_0 \cup D_\infty) \\
			0 I & \text{in } D_0 \\
			\infty I & \text{in } D_\infty.
		\end{dcases}
	\end{equation*}
	We shorten the notation to $\Lambda = \Lambda(A_D)$ and $\Lambda_k = \Lambda(A_k)$ for $k\in\{0,1,2\}$. From Theorem~\ref{thm:locpot} and Proposition~\ref{prop:locextreme}, we may pick a sequence $(f_j)$ in $L^2_\diamond(\Gamma)$, such that $u_j = u_{f_j}^{A_0}$, $\widehat{u}_j = Eu_{f_j}^{A_1}$, and $\widetilde{u}_j = Eu_{f_j}^{A_2}$ satisfy
	\begin{equation*}
		\lim_{j\to\infty} \int_B \abs{\nabla u_j}^2\,\di x = \lim_{j\to\infty} \int_B \abs{\nabla \widehat{u}_j}^2\,\di x = \lim_{j\to\infty} \int_B \abs{\nabla \widetilde{u}_j}^2\,\di x = \infty
	\end{equation*}
	and
	\begin{equation*}
		\lim_{j\to\infty} \int_{\Omega\setminus U} \abs{\nabla u_j}^2\,\di x = \lim_{j\to\infty} \int_{\Omega\setminus U} \abs{\nabla \widehat{u}_j}^2\,\di x = \lim_{j\to\infty} \int_{\Omega\setminus U} \abs{\nabla \widetilde{u}_j}^2\,\di x = 0.
	\end{equation*}
	Next, we write
	\begin{equation} \label{eq:case2diff}
		\Lambda - \Lambda_{\emptyset}^C = (\Lambda - \Lambda_2) + (\Lambda_2 - \Lambda_1) + (\Lambda_1 - \Lambda_0) + (\Lambda_0 - \Lambda_{\emptyset}^C),
	\end{equation}
	and estimate each of the differences of the local ND maps. Recall that $A_0 - A_\textup{F}$ is negative semidefinite in $U$ and there exists a $c>0$ such that $A_0 - A_\textup{F} \leq -cI$ in $B$. From Propositions~\ref{prop:monoextreme1}--\ref{prop:monoextreme3} and Proposition~\ref{prop:bnd} we have:
	\begin{align*}
		\inner{f_j,(\Lambda-\Lambda_2)f_j} &\leq \int_{D_\textup{F}} A_0 A_\textup{F}^{-1}(A_0-A_\textup{F})\nabla \widetilde{u}_j\cdot\overline{\nabla \widetilde{u}_j}\,\di x \\
		&\leq \norm{A_0 A_\textup{F}^{-1}(A_0-A_\textup{F})}_* \int_{\Omega\setminus U} \abs{\nabla \widetilde{u}_j}^2\,\di x - c(\tfrac{\alpha}{\beta})^2\int_B \abs{\nabla \widetilde{u}_j}^2\,\di x, \\
		\inner{f_j,(\Lambda_2-\Lambda_1)f_j} &\leq -\int_{D_\infty} A_0\nabla \widehat{u}_j\cdot\overline{\nabla \widehat{u}_j}\,\di x \leq 0, \\
		\inner{f_j,(\Lambda_1-\Lambda_0)f_j} &\leq \int_{D_0} A_0\nabla \widehat{u}_j\cdot\overline{\nabla \widehat{u}_j}\,\di x \leq \beta \int_{\Omega\setminus U} \abs{\nabla \widehat{u}_j}^2\,\di x, \\
		\inner{f_j,(\Lambda_0-\Lambda_{\emptyset}^C)f_j} &\leq K\int_{C} \abs{\nabla u_j}^2 \,\di x \leq K\int_{\Omega\setminus U} \abs{\nabla u_j}^2 \,\di x.
	\end{align*}
	Hence, \eqref{eq:case2diff} gives
	\begin{equation*}
		\lim_{j\to\infty} \inner{f_j,\bigl(\Lambda(A_D) - \Lambda_{\emptyset}^C\bigr)f_j} = -\infty,
	\end{equation*}
	so we conclude that $\Lambda(A_D) \not\geq \Lambda_{\emptyset}^C$.
	
	\subsection*{Case 3}
	
	We define two auxiliary coefficients:
	\begin{equation*}
		A_1 = A_{\textup{F}}
		\qquad \text{and} \qquad 
		A_2 = \begin{dcases}
			A_{\textup{F}} & \text{in } \Omega\setminus D_\infty \\
			\infty I & \text{in } D_\infty,
		\end{dcases}
	\end{equation*}
	and denote the coefficient associated with the test-operator $\Lambda_C^{\emptyset}$ as $A_C$.
	
	We shorten the notation to $\Lambda = \Lambda(A_D)$ and $\Lambda_k = \Lambda(A_k)$ for $k\in\{0,1,2\}$. From Theorem~\ref{thm:locpot} and Propositions~\ref{prop:simultloc} and~\ref{prop:locextreme}, we may pick a sequence $(f_j)$ in $L^2_\diamond(\Gamma)$, such that $u_j = u_{f_j}^{A_0}$, $\widehat{u}_j = u_{f_j}^{A_1}$, $\widetilde{u}_j = u_{f_j}^{A_2}$, and $\breve{u}_j = Eu_{f_j}^{A_C}$ satisfy
	\begin{equation*}
		\lim_{j\to\infty} \int_B \abs{\nabla u_j}^2\,\di x = \lim_{j\to\infty} \int_B \abs{\nabla \widehat{u}_j}^2\,\di x = \lim_{j\to\infty} \int_B \abs{\nabla \widetilde{u}_j}^2\,\di x = \lim_{j\to\infty} \int_B \abs{\nabla \breve{u}_j}^2\,\di x = \infty
	\end{equation*}
	and
	\begin{equation*}
		\lim_{j\to\infty} \int_{\Omega\setminus U} \abs{\nabla u_j}^2\,\di x = \lim_{j\to\infty} \int_{\Omega\setminus U} \abs{\nabla \widehat{u}_j}^2\,\di x = \lim_{j\to\infty} \int_{\Omega\setminus U} \abs{\nabla \widetilde{u}_j}^2\,\di x = \int_{\Omega\setminus U} \abs{\nabla \breve{u}_j}^2\,\di x = 0.
	\end{equation*}
	Next, we write
	\begin{equation} \label{eq:case3diff}
		\Lambda_C^{\emptyset} - \Lambda = (\Lambda_C^{\emptyset} - \Lambda_0) + (\Lambda_0 - \Lambda_1) + (\Lambda_1 - \Lambda_2) + (\Lambda_2 - \Lambda),
	\end{equation}
	and estimate each of the differences of the local ND maps. From Propositions~\ref{prop:monoextreme1}--\ref{prop:monoextreme3} we have:
	\begin{align*}
		\inner{f_j,(\Lambda_C^{\emptyset} - \Lambda_0)f_j} &\leq \int_{C} A_0\nabla \breve{u}_j\cdot\overline{\nabla \breve{u}_j}\,\di x \leq \beta \int_{\Omega\setminus U} \abs{\nabla \breve{u}_j}^2\,\di x, \\
		\inner{f_j,(\Lambda_0 - \Lambda_1)f_j} &\leq \int_{D_\textup{F}} (A_\textup{F}-A_0)\nabla u_j\cdot\overline{\nabla u_j}\,\di x \leq \norm{A_\textup{F}-A_0}_* \int_{\Omega\setminus U} \abs{\nabla u_j}^2\,\di x, \\
		\inner{f_j,(\Lambda_1-\Lambda_2)f_j} &\leq K\int_{D_\infty} \abs{\nabla \widehat{u}_j}^2 \,\di x \leq K\int_{\Omega\setminus U} \abs{\nabla \widehat{u}_j}^2 \,\di x, \\
		\inner{f_j,(\Lambda_2-\Lambda)f_j} &\leq -\int_{D_0} A_0\nabla \widetilde{u}_j\cdot\overline{\nabla \widetilde{u}_j}\,\di x \leq -\alpha\int_{B} \abs{\nabla \widetilde{u}_j}^2 \,\di x.
	\end{align*}
	Hence, \eqref{eq:case3diff} gives
	\begin{equation*}
		\lim_{j\to\infty} \inner{f_j,\bigl(\Lambda_C^{\emptyset} - \Lambda(A_D)\bigr)f_j} = -\infty,
	\end{equation*}
	so we conclude that $\Lambda_C^{\emptyset} \not\geq \Lambda(A_D)$.
	
	\subsection*{Case 4}
	
	We define two auxiliary coefficients:
	\begin{equation*}
		A_1 = \begin{dcases}
			A_0 & \text{in } \Omega\setminus D_\infty \\
			\infty I & \text{in } D_\infty
		\end{dcases} 
		\qquad \text{and} \qquad 
		A_2 = \begin{dcases}
			A_0 & \text{in } \Omega\setminus (D_0 \cup D_\infty) \\
			0 I & \text{in } D_0 \\
			\infty I & \text{in } D_\infty,
		\end{dcases}
	\end{equation*}
	and denote the coefficient associated with the test-operator $\Lambda_C^{\emptyset}$ as $A_C$.
	
	We shorten the notation to $\Lambda = \Lambda(A_D)$ and $\Lambda_k = \Lambda(A_k)$ for $k\in\{0,1,2\}$. From Theorem~\ref{thm:locpot} and Proposition~\ref{prop:locextreme}, we may pick a sequence $(f_j)$ in $L^2_\diamond(\Gamma)$, such that $u_j = u_{f_j}^{A_0}$, $\widehat{u}_j = u_{f_j}^{A_1}$, $\widetilde{u}_j = Eu_{f_j}^{A_2}$, and $\breve{u}_j = Eu_{f_j}^{A_C}$ satisfy
	\begin{equation*}
		\lim_{j\to\infty} \int_B \abs{\nabla u_j}^2\,\di x = \lim_{j\to\infty} \int_B \abs{\nabla \widetilde{u}_j}^2\,\di x = \lim_{j\to\infty} \int_B \abs{\nabla \breve{u}_j}^2\,\di x = \infty
	\end{equation*}
	and
	\begin{equation*}
		\lim_{j\to\infty} \int_{\Omega\setminus U} \abs{\nabla u_j}^2\,\di x  = \lim_{j\to\infty} \int_{\Omega\setminus U} \abs{\nabla \widetilde{u}_j}^2\,\di x = \int_{\Omega\setminus U} \abs{\nabla \breve{u}_j}^2\,\di x = 0.
	\end{equation*}
	Next, we write
	\begin{equation} \label{eq:case4diff}
		\Lambda_C^{\emptyset} - \Lambda = (\Lambda_C^{\emptyset} - \Lambda_0) + (\Lambda_0 - \Lambda_1) + (\Lambda_1 - \Lambda_2) + (\Lambda_2 - \Lambda),
	\end{equation}
	and estimate each of the differences of the local ND maps. Recall that $A_\textup{F} - A_0$ is negative semidefinite in $U$ and there exists a $c>0$ such that $A_\textup{F} - A_0 \leq -cI$ in $B$. From Propositions~\ref{prop:monoextreme1}--\ref{prop:monoextreme3} we have:
	\begin{align*}
		\inner{f_j,(\Lambda_C^{\emptyset} - \Lambda_0)f_j} &\leq \int_{C} A_0\nabla \breve{u}_j\cdot\overline{\nabla \breve{u}_j}\,\di x \leq \beta\int_{\Omega\setminus U} \abs{\nabla \breve{u}_j}^2\,\di x, \\
		\inner{f_j,(\Lambda_0 - \Lambda_1)f_j} &\leq K\int_{D_\infty} \abs{\nabla u_j}^2\,\di x \leq K\int_{\Omega\setminus U} \abs{\nabla u_j}^2\,\di x, \\
		\inner{f_j,(\Lambda_1-\Lambda_2)f_j} &\leq -\int_{D_0} A_0\nabla \widehat{u}_j\cdot\overline{\nabla \widehat{u}_j} \,\di x \leq 0, \\
		\inner{f_j,(\Lambda_2-\Lambda)f_j} &\leq \int_{D_\textup{F}} (A_\textup{F}-A_0)\nabla \widetilde{u}_j\cdot\overline{\nabla \widetilde{u}_j}\,\di x \\
		&\leq \norm{A_\textup{F}-A_0}_* \int_{\Omega\setminus U} \abs{\nabla\widetilde{u}_j}^2\,\di x - c\int_B \abs{\nabla\widetilde{u}_j}^2\,\di x.
	\end{align*}
	Hence, \eqref{eq:case4diff} gives
	\begin{equation*}
		\lim_{j\to\infty} \inner{f_j,\bigl(\Lambda_C^{\emptyset} - \Lambda(A_D)\bigr)f_j} = -\infty,
	\end{equation*}
	so we conclude that $\Lambda_C^{\emptyset} \not\geq \Lambda(A_D)$.
	
	\subsection*{Acknowledgements}
	
	The authors are supported by grant 10.46540/3120-00003B from Independent Research Fund Denmark.
	
	\bibliographystyle{plain}

\end{document}